\documentclass{amsart}
\usepackage{cleveref}
\usepackage{autonum}
\usepackage{mathtools}
\usepackage{amssymb}
\usepackage{enumerate}
\usepackage{braket}
\usepackage{array}
\usepackage{color}


\newtheorem{theorem}{Theorem}
\newtheorem{pretheorem}{Theorem}

\newtheorem{lemma}{Lemma}

\theoremstyle{definition}

\newtheorem{remark}{Remark}

\crefname{section}{Section}{Sections}

\crefname{theorem}{Theorem}{Theorems}
\crefname{pretheorem}{Theorem}{Theorems}
\crefname{corollary}{Corollary}{Corollaries}
\crefname{lemma}{Lemma}{Lemmas}
\crefname{definition}{Definition}{Definitions}
\crefname{remark}{Remark}{Remarks}
\crefname{example}{Example}{Examples}

\crefformat{equation}{\upshape(#2#1#3)}

\crefname{table}{Table}{Tables}

\newcommand{\midmid}{\mathrel{}\middle|\mathrel{}}


\renewcommand{\epsilon}{\varepsilon}


\newcommand{\GG}[1]{%
\frac{1}{k\ell}
\frac{\Gamma(\frac{#1}{k})\Gamma(\frac{1}{\ell})}
{\Gamma(\frac{#1}{k}+\frac{1}{\ell})}}
\newcommand{\GGG}[1]{%
\frac{1}{k\ell}
\frac{\Gamma(\frac{#1}{k})\Gamma(\frac{1}{\ell})}
{\Gamma(\frac{#1}{k}+\frac{1}{\ell}+1)}}


\def\showsize#1{\setbox0=\hbox{#1}width=\the\wd0, height=\the\ht0, depth=\the\dp0}

%

\begin{document}

\title[On the sum of a prime power and a power in short intervals]
{On the sum of a prime power and a power\\in short intervals}

\author[Y. Suzuki]{Yuta Suzuki}

\date{}

\subjclass[2010]{Primary 11P32, Secondary 11L07}
\keywords{Waring--Goldbach problem; Exponential sums; Zero density estimates.}

\begin{abstract}
Let $R_{k,\ell}(N)$ be the representation function
for the sum of the $k$-th power of a prime and the $\ell$-th power of a positive integer.
Languasco and Zaccagnini (2017) proved an asymptotic formula
for the average of $R_{1,2}(N)$ over short intervals $(X,X+H]$
of the length $H$ slightly shorter than $X^{\frac{1}{2}}$,
which is shorter than the length $H=X^{\frac{1}{2}+\epsilon}$
in the exceptional set estimates of Mikawa (1993) and of Perelli and Pintz (1995).
In this paper, we prove that the same asymptotic formula for $R_{1,2}(N)$ holds for $H$ of the size $X^{0.337}$.
Recently, Languasco and Zaccagnini (2018) extended their result to more general $(k,\ell)$.
We also consider this general case, and as a corollary,
we prove a conditional result of Languasco and Zaccagnini (2018)
for the case $\ell=2$ unconditionally up to some small factors.
\end{abstract}
\maketitle

%
%
\section{Introduction}
\label{section:intro}
Let $R(n)$ be the representation function
for a given additive problem with prime numbers.
For example, in this paper,
we consider the binary additive problem with prime numbers given by the equation
\begin{equation}
\label{main_eq}
N=p^k+n^\ell,
\end{equation}
where $k,\ell$ are given positive integers,
$p$ denotes a variable for prime numbers,
and $n$ denotes a variable for positive integers.
Then the representation function for the equation \cref{main_eq}
with logarithmic weight is given by
\begin{equation}
\label{def:R}
R(N)=R_{k,\ell}(N)=\sum_{p^k+n^\ell=N}\log p,
\end{equation}
which counts the solutions $(p,n)$ of \cref{main_eq}.
In this paper, we consider the short interval average of such representation function
\begin{equation}
\label{short_avg}
\sum_{X<N\le X+H}R(N),
\end{equation}
where $4\le H\le X$.
Recently, Languasco and Zaccagnini gave extensive research
(e.g.~see \cite{LZ_3/2,LZ_1,LZ_4444,LZ_122,LZ_kl,LZ_kl2}) on the short interval average \eqref{short_avg}
for various additive problems with prime numbers,
and in the case $k=1$ of \cref{main_eq},
they obtained short interval asymptotic formulas for the average \cref{short_avg}
with $H$ shorter than in the known exceptional set estimates in short intervals.

For example, let us consider the Hardy--Littlewood equation
\begin{equation}
\label{HL_eq}
N=p+n^2,
\end{equation}
which is the case $(k,\ell)=(1,2)$ of our equation \cref{main_eq}.
In their famous paper Partitio Numerorum III,
Hardy and Littlewood~\cite[Conjecture~H]{PN3} applied their circle method formally
to obtain a hypothetical asymptotic formula
\begin{equation}
\label{HL_asymp}
R_{1,2}(N)=\mathfrak{S}(N)\sqrt{N}+(\text{error}),\quad
(N\colon\text{not square})
\end{equation}
as $N\to\infty$, where the singular series $\mathfrak{S}(N)$ is given by
\[
\mathfrak{S}(N)=\prod_{p>2}\left(1-\frac{(N/p)}{p-1}\right),\quad
(N/p)\colon\text{Legendre symbol}.
\]
This asymptotic formula \cref{HL_asymp} itself
seems still far beyond our current technology,
but we can prove \cref{HL_asymp} on average.
Let $A>0$ be an arbitrary constant and introduce
\[
E(X)
=\#\left\{N\le X\midmid
\left|R_{1,2}(N)-\mathfrak{S}(N)\sqrt{N}\right|\ge\sqrt{N}(\log N)^{-A},\ 
N\colon\text{not square}\right\},
\]
where $X\ge2$ is a real number.
This function $E(X)$ counts the number of positive integers $\le X$
for which the hypothetical asymptotic formula \eqref{HL_asymp} fails.
Miech~\cite{Miech} proved a non-trivial bound
\begin{equation}
\label{Miech}
E(X)\ll XL^{-A},\quad
L=\log X
\end{equation}
for any $A>0$,
where the implicit constant depends on $A$.
Thus, Miech proved that the asymptotic formula \cref{HL_asymp}
holds for almost all integer $N$.
The short interval version of Miech's result~\cref{Miech}
was obtained by Mikawa~\cite{Mikawa} and by Perelli and Pintz \cite{PP_square}
independently. Their result gives a non-trivial bound
\begin{equation}
\label{MPP_bound}
E(X+H)-E(X)\ll HL^{-A}
\end{equation}
for any $A>0$ provided
\begin{equation}
\label{MPP_range}
X^{\frac{1}{2}+\epsilon}\le H\le X,
\end{equation}
where $X,H,\epsilon$ are real numbers with $4\le H\le X$ and $\epsilon>0$,
and the implicit constant may depend on $A$ and $\epsilon$.
One of the aim in this problem is to obtain the same bound \cref{MPP_bound} for shorter $H$.
Although the range~\cref{MPP_range} is still the best possible result today
for the estimate \cref{MPP_bound},
Languasco and Zaccagnini~\cite{LZ_3/2} showed
that if we consider the direct average~\cref{short_avg} instead,
then we can deal with shorter $H$ than \cref{MPP_range}.
After some minor modification,
Theorem 2 of \cite{LZ_3/2} gives the following.
In this paper, the letter $B$ denotes the quantity given by
\begin{equation}
\label{def:B}
B=\exp\left(c\left(\frac{\log X}{\log\log X}\right)^{\frac{1}{3}}\right),
\end{equation}
where $c$ is some small positive constant
which may depend on $k,\ell$ and $\epsilon$.

\begin{pretheorem}[{Languasco and Zaccagnini~\cite[Theorem 2]{LZ_3/2}}]
\label{thm:LZ_12}
For real numbers $X,H$ and $\epsilon$ with $4\le H\le X$ and $\epsilon>0$,
we have
\begin{equation}
\label{LZ_asymp_12}
\sum_{X<N\le X+H}R_{1,2}(N)
=
HX^{\frac{1}{2}}+O(HX^{\frac{1}{2}}B^{-1})
\end{equation}
provided $X^{\frac{1}{2}}B^{-1}\le H\le X^{1-\epsilon}$,
where the implicit constant depends on $\epsilon$.
\end{pretheorem}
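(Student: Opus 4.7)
The plan is to follow the smoothed circle method of Languasco and Zaccagnini, built around Laplace-weighted generating series and Mellin inversion. For $\Re z>0$, introduce
\begin{equation*}
\tilde S(z) = \sum_{p}(\log p)\,e^{-pz}, \qquad \tilde\omega(z) = \sum_{n\ge 1} e^{-n^{2}z},
\end{equation*}
so that $\tilde S(z)\tilde\omega(z) = \sum_{N}R_{1,2}(N)\,e^{-Nz}$. I would then extract the short-interval sum by integrating this product against the kernel that converts the exponential weight $e^{-Nz}$ into the sharp indicator of $(X,X+H]$:
\begin{equation*}
\sum_{X<N\le X+H} R_{1,2}(N) = \frac{1}{2\pi i}\int_{(1/X)} \tilde S(z)\,\tilde\omega(z)\,\frac{e^{(X+H)z}-e^{Xz}}{z}\,dz + \text{(truncation)}.
\end{equation*}
The choice $\Re z=1/X$ is natural because the kernel is then of effective length $H$, while $\tilde S$ and $\tilde\omega$ sit near their expected sizes $X$ and $\tfrac{1}{2}\sqrt{\pi X}$.

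Next I would substitute the Mellin representations
\begin{equation*}
\tilde S(z) = -\frac{1}{2\pi i}\int_{(2)}\Gamma(s)\frac{\zeta'}{\zeta}(s)\,z^{-s}\,ds, \qquad
\tilde\omega(z) = \frac{1}{2\pi i}\int_{(2)}\tfrac{1}{2}\Gamma(s/2)\,\zeta(s)\,z^{-s/2}\,ds,
\end{equation*}
interchange orders of integration, and shift each $s$-contour leftward. The pole of $-\zeta'/\zeta$ at $s=1$ supplies the main term, which after evaluating the remaining $z$-integral against the kernel becomes exactly $HX^{1/2}$. The residual contours, containing the non-trivial zeros of $\zeta$, are absorbed into the error term. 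The size of this error is then bounded by invoking the Vinogradov--Korobov zero-free region together with a suitable zero-density estimate; the width of that region is precisely what furnishes the saving factor $B^{-1}$ defined in \cref{def:B}. Stirling's bound on the $\Gamma$ factors tames the vertical integrals, and Perron/Laplace tail estimates handle the truncation.

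The restriction $H\ge X^{1/2}B^{-1}$ arises from balancing the zero contribution against the main term: the unavoidable error introduced by approximating $\tilde S(z)$ by $1/z$ on $\Re z = 1/X$ is of order $X\cdot B^{-1}$, which, coupled with the $\tilde\omega$-factor of size $X^{1/2}$ and the kernel normalisation $H/X$, produces an error of order $HX^{1/2}B^{-1}$; this must remain below the main term, and one cannot push $H$ below $X^{1/2}/B$ without swamping the main term. The upper constraint $H\le X^{1-\epsilon}$ is needed to keep the Laplace kernel well-separated from the saddle of $\tilde\omega$.

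The main obstacle I anticipate is performing the contour shifts cleanly. The Gamma factors grow rapidly along vertical lines and the kernel $(e^{(X+H)z}-e^{Xz})/z$ has delicate oscillatory behaviour once $|\Im z|$ leaves the range $\lesssim 1/H$, so one must simultaneously balance the exponential decay of $\Gamma$ against oscillation of $z^{-s}$ and extract genuine cancellation from the sum over zeros. Ensuring that everything converges absolutely and that the zero-contribution genuinely meets the $HX^{1/2}B^{-1}$ bound uniformly for all admissible $H$ is the technical heart of the argument, and is precisely where the Vinogradov--Korobov zero-free region and Stirling's formula must be used in tandem.
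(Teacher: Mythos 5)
You should first be aware that the paper does not prove \cref{thm:LZ_12} at all: it is imported verbatim from Languasco and Zaccagnini \cite{LZ_3/2} (``after some minor modification, Theorem 2 of \cite{LZ_3/2} gives the following''), so there is no in-paper proof to compare against. What you have written is essentially a reconstruction of the proof in that cited source: the Laplace-weighted generating functions $\tilde S(z)$, $\tilde\omega(z)$ with $\Re z=1/X$, the kernel $(e^{(X+H)z}-e^{Xz})/z$ converting the exponential weight into the indicator of $(X,X+H]$, the Mellin representations, the main term from the pole of $-\zeta'/\zeta$ at $s=1$, and the error controlled by the Korobov--Vinogradov zero-free region, which is exactly what produces the factor $B^{-1}$. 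That is the right outline for the quoted result, and it is a genuinely different route from the one this paper takes for its own stronger \cref{thm:main_12,thm:main_kl}: here the circle method is avoided entirely, the von Mangoldt explicit formula (\cref{lem:vonMangoldt}) is inserted directly into the double sum, Poisson summation is applied in the $n^{\ell}$ variable (\cref{lem:S_Poisson}) to extract cancellation over the squares, and the Huxley--Ingham zero-density estimate (\cref{lem:HI}) controls the resulting zero sums. The comparison is instructive: in the $L^1$ framework you describe, the $\tilde\omega$-factor is only ever used through its size $\asymp X^{1/2}$, so the exponent $\tfrac12$ of $X$ in the lower bound for $H$ is a structural barrier of that method; the Poisson-summation step is precisely what this paper adds to break it and reach $\Theta(1,2)=0.3369\cdots$. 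So your sketch, even if completed, proves \cref{thm:LZ_12} but cannot be upgraded to \cref{thm:main_12}.

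Two caveats on the proposal as it stands. The decisive quantitative step --- that the contribution of the non-trivial zeros is $\ll HX^{1/2}B^{-1}$ uniformly for $X^{1/2}B^{-1}\le H\le X^{1-\epsilon}$ --- is asserted rather than carried out, and this is where all the work lies; note that in the Laplace-transform setting the factor $\Gamma(\rho)$ decays exponentially in $|\gamma|$, so the zero sum converges absolutely and only the zero-free region \cref{lem:KV} is needed, not a zero-density estimate as you suggest. Second, your explanation of where the constraint $H\ge X^{1/2}B^{-1}$ comes from is heuristic; in the cited proof it arises from error terms of size $X^{1/2}$ (not multiplied by $H$) left over after approximating $\tilde\omega$ and truncating the $\alpha$-integration, which must be dominated by $HX^{1/2}B^{-1}$. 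These gaps are acceptable for a sketch of a known quoted theorem, but they are exactly the points one would have to verify against \cite{LZ_3/2}.
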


Thus, Languasco and Zaccagnini obtained the asymptotic formula \cref{LZ_asymp_12}
for $H$ shorter than \cref{MPP_range} up to the factor $B^{-1}$.
However, we still have the same exponent $\frac{1}{2}$ of $X$.
In this paper, we improve this exponent from $\frac{1}{2}$ to $0.336899\cdots$.
\begin{theorem}
\label{thm:main_12}
For real numbers $X,H,\epsilon$ with $4\le H\le X$ and $\epsilon>0$,
we have the asymptotic formula \cref{LZ_asymp_12}
provided
\[
X^{\Theta(1,2)+\epsilon}\le H\le X^{1-\epsilon},\quad
\Theta(1,2)=\frac{32-4\sqrt{15}}{49}=0.336899\cdots,
\]
where the implicit constant depends on $\epsilon$.
\end{theorem}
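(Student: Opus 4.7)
The plan is to attack the sum via the Hardy--Littlewood circle method. Set
\[
S(\alpha) = \sum_{p \le X}(\log p)\,e(p\alpha), \qquad
T(\alpha) = \sum_{n \le \sqrt{X}} e(n^{2}\alpha), \qquad
U(\alpha,H) = \sum_{X < N \le X+H} e(-N\alpha),
\]
so that orthogonality yields
\[
\sum_{X < N \le X+H} R_{1,2}(N) = \int_{0}^{1} S(\alpha)\,T(\alpha)\,U(\alpha,H)\,d\alpha.
\]
Following Languasco and Zaccagnini, I would approximate $S(\alpha)$ by the trivial exponential sum $V(\alpha) = \sum_{n \le X} e(n\alpha)$ and split
\[
\int_{0}^{1} S T U\,d\alpha = \int_{0}^{1} V T U\,d\alpha + \mathcal{E}, \qquad \mathcal{E} := \int_{0}^{1} (S - V) T U\,d\alpha.
\]
The first integral is a lattice-point count for $n + m^{2} \in (X, X+H]$ with $n \le X$, $m \le \sqrt{X}$, which evaluates to $HX^{1/2} + O(H^{3/2})$; since $H \le X^{1-\epsilon}$, the remainder fits within $HX^{1/2}B^{-1}$, and the entire problem reduces to establishing $\mathcal{E} \ll HX^{1/2}B^{-1}$.

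Next, I would perform a Farey dissection of $[0,1]$ into major arcs $\mathfrak{M}$ (around rationals $a/q$ with $q \le Q$ and $|\alpha - a/q|$ suitably small) and the complementary minor arcs $\mathfrak{m}$, with $Q$ a parameter to be optimised. On $\mathfrak{M}$, the Korobov--Vinogradov zero-free region together with the explicit formula for $\psi(x,\chi)$ furnishes an approximation of $S - V$ that saves the full factor $B^{-1}$, exactly as in Languasco--Zaccagnini's argument, so the major-arc contribution to $\mathcal{E}$ is acceptable. The novelty must lie on $\mathfrak{m}$: here I would exploit the Weyl bound $T(\alpha) \ll X^{1/4+\epsilon} + X^{1/2+\epsilon} q^{-1/2}$ for $|\alpha - a/q| \le 1/q^{2}$, combined with a Heath-Brown (or Vaughan) decomposition of $S - V$ into Type~I and Type~II sums. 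The Type~I pieces yield to partial summation and the Weyl bound on $T$; the Type~II pieces are handled by Cauchy--Schwarz against a Huxley-type zero-density / large sieve input, weighted by the short-interval kernel $|U(\alpha,H)| \ll \min(H, \|\alpha\|^{-1})$ together with the $L^{2}$ estimate $\int_0^1 |U(\alpha,H)|^2\,d\alpha \ll H$.

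Balancing the Farey parameter $Q$ against the competing Weyl exponent on $T$ and the zero-density exponent on $S - V$ produces an optimisation whose critical point satisfies a quadratic equation; I expect its positive root to give precisely $\Theta(1,2) = (32 - 4\sqrt{15})/49$, which explains the appearance of $\sqrt{15}$. The main obstacle, and where the bulk of the work will go, is the intermediate range of denominators $q$ in which neither the major-arc explicit formula nor the pure Weyl bound suffices on its own: there one must simultaneously deploy a zero-density estimate for Dirichlet $L$-functions (in short vertical windows, to exploit the short-interval restriction imposed by $U$) and a Weyl-type average for $T$, and one must verify that every Cauchy--Schwarz step preserves the logarithmic saving $B^{-1}$ inherited from the zero-free region. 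A secondary but still delicate point is to ensure that the lattice-point evaluation of $\int V T U\,d\alpha$ really does produce $HX^{1/2}$ with an error absorbed by $HX^{1/2}B^{-1}$ throughout the full range $X^{\Theta(1,2)+\epsilon} \le H \le X^{1-\epsilon}$, which forces a careful treatment of the boundary range $m \asymp \sqrt{H}$ in the count.
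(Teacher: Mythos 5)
Your proposal is a circle-method plan, whereas the paper abandons the circle method entirely: it writes the average as $\sum_{n^{2}\le X}\bigl(\psi((X+H-n^{2}))-\psi((X-n^{2}))\bigr)$, inserts the von Mangoldt explicit formula, applies Poisson summation in $n$ to each zero term $S_{\rho}(Q)=\rho^{-1}\sum_{n^{2}\le X}(Q-n^{2})^{\rho}$, estimates the resulting exponential integrals by the second-derivative test to win a factor $|\gamma|^{-1/2}$, and then sums over zeros with the Huxley--Ingham density estimate; the exponent $\frac{32-4\sqrt{15}}{49}$ drops out of solving $\phi(\lambda)+\frac{1}{2}\lambda=\frac{3}{2}$ with $\phi$ the Huxley density envelope. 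More importantly, your proposal is not a proof but a programme, and its central step does not work as described. On the minor arcs, with $H\asymp X^{0.337}$ the only unconditional $L^{2}$ inputs available are $\int_{0}^{1}|U|^{2}\ll H$, $\int_{0}^{1}|T|^{2}\ll X^{1/2}$ and $\int_{0}^{1}|S-V|^{2}\ll X\log X$, and every H\"older arrangement of these with an $L^{\infty}$ bound on the remaining factor falls short: for instance $\sup_{\mathfrak m}|T|\cdot\|S-V\|_{2}\|U\|_{2}\ll\sup_{\mathfrak m}|T|\cdot(XH\log X)^{1/2}$ would require $\sup_{\mathfrak m}|T|\ll H^{1/2}B^{-1}\ll X^{0.169}$, which is false since $|T(\alpha)|\gg X^{1/4}$ on a set of positive measure. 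This $L^{\infty}$--$L^{2}$ loss is exactly why the circle-method results of Mikawa and of Perelli--Pintz, and of Languasco--Zaccagnini, stall at $H\approx X^{1/2}$; your sketch supplies no mechanism that breaks this barrier, and the assertion that the optimisation ``should'' produce $(32-4\sqrt{15})/49$ is asserted rather than derived.

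Two further concrete defects. First, the problem has no arithmetic-progression structure, so the Dirichlet $L$-function apparatus you invoke (explicit formula for $\psi(x,\chi)$, zero density for $L(s,\chi)$) is not the relevant input; the paper needs only the Riemann zeta function, its Korobov--Vinogradov zero-free region, and the Huxley--Ingham bound for $N(\alpha,T)$. Second, the genuinely new ingredient you would need --- simultaneous exploitation of the oscillation of $e(n^{2}\alpha)$ \emph{and} of the zeros of $\zeta$, which in the paper appears as the bound $I_{\rho}(X+H,n)-I_{\rho}(X,n)\ll H^{\beta}|\gamma|^{\beta-1/2}L$ obtained from the stationary-phase analysis of $\int u^{\rho-1}e(n(X-u)^{1/2})\,du$ --- is absent from your decomposition into Type~I/Type~II sums, so even granting all the standard steps the argument would not close.
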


Recently, Languasco and Zaccagnini~\cite{LZ_kl,LZ_kl2} dealt with
other cases of \cref{main_eq}:
\begin{pretheorem}[{Languasco and Zaccagnini~\cite[Theorem~3]{LZ_kl2}}]
\label{thm:LZ_kl}
For positive integers $k,\ell$ with $k,\ell\ge2$,
and real numbers $X,H,\epsilon$ with $4\le H\le X$ and $\epsilon>0$, we have
\begin{equation}
\label{LZ_asymp_kl}
\sum_{X<N\le X+H}R_{k,\ell}(N)
=
\GG{1}HX^{\frac{1}{k}+\frac{1}{\ell}-1}+O(HX^{\frac{1}{k}+\frac{1}{\ell}-1}B^{-1})
\end{equation}
provided
\[
X^{\Theta_{LZ}(k,\ell)+\epsilon}\le H\le X^{1-\epsilon},
\]
where
\[
\Theta_{LZ}(k,\ell)
=
1-\theta_{\mathrm{LZ}}(k,\ell),\quad
\theta_{\mathrm{LZ}}(k,\ell)=\min\left(\frac{5}{6k},\frac{1}{\ell}\right),
\]
and the implicit constant depends on $k,\ell$ and $\epsilon$.
\end{pretheorem}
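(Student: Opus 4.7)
The natural plan is the Hardy--Littlewood circle method in its ``direct'' formulation (without passing through an exceptional-set argument). Setting $P = (X+H)^{1/k}$ and $M = (X+H)^{1/\ell}$, introduce
\[
S(\alpha) = \sum_{p \le P}(\log p)\, e(p^k\alpha),\quad T(\alpha) = \sum_{n \le M} e(n^\ell\alpha),\quad U(\alpha) = \sum_{X < N \le X+H} e(N\alpha).
\]
Orthogonality yields
\[
\sum_{X < N \le X+H} R_{k,\ell}(N) = \int_{-1/2}^{1/2} S(\alpha) T(\alpha) \overline{U(\alpha)}\, d\alpha.
\]
Dissect $[-1/2,1/2)$ by Farey-type major arcs $\mathfrak{M}$ around reduced fractions $a/q$ with $q \le Q_0 := B^{O(1)}$, of radius comparable to $Q_0/X$, and let $\mathfrak{m}$ be the complement.

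On $\mathfrak{M}$, apply the classical approximations: writing $\alpha = a/q + \beta$,
\[
S(a/q+\beta) = \frac{c_q^\ast(a,k)}{\phi(q)} V_k(\beta) + O\bigl(P B^{-1}\bigr),\qquad T(a/q+\beta) = \frac{g_q(a,\ell)}{q} V_\ell(\beta) + O\bigl(q^{1/2+\epsilon}\bigr),
\]
where $V_j(\beta) = \sum_{n\le X^{1/j}} e(n^j\beta)$, $c_q^\ast$ is a Ramanujan-like sum encoding primitive residues against $k$-th powers, and $g_q$ is the standard Gauss sum. The first approximation relies on Siegel--Walfisz for primes in arithmetic progressions and is the source of the $B^{-1}$ factor in the final error. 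Substituting, summing the arithmetic factors in $q$ (the singular series collapses to $1$ by Chinese Remainder and Gauss-sum identities), and evaluating the remaining $\beta$-integral by a Fourier-inversion / beta-integral identity produces the main term $\GG{1} H X^{1/k+1/\ell-1}$ with admissible error.

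The critical input is the minor-arc estimate, which splits into two regimes via Hölder's inequality. First, pulling $S$ out in supremum and using Parseval,
\[
\int_{\mathfrak{m}} |STU|\,d\alpha \ll \bigl(\sup_{\mathfrak{m}} |S|\bigr) \Bigl(\int |T|^2\,d\alpha\Bigr)^{1/2} \Bigl(\int |U|^2\,d\alpha\Bigr)^{1/2} \ll \bigl(\sup_{\mathfrak{m}} |S|\bigr) M^{1/2} H^{1/2}.
\]
A Kumchev-type pointwise bound $|S(\alpha)| \ll P^{1-5/(6k)+\epsilon}$ on $\mathfrak{m}$, coming from Vaughan's identity coupled with Weyl differencing of order $k$, makes this admissible as soon as $H \ge X^{1-5/(6k)+\epsilon}$. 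This gives the first branch of $\theta_{\mathrm{LZ}}$. In the second regime, a higher-moment analysis of $T$ via Hua-type inequalities combined with Weyl's inequality trades $L^2$-control of $S$ (of size $P\log P$ by the Prime Number Theorem) against an $L^{2^\ell}$-type control of $T$; after a suitable Bombieri--Vinogradov-style averaging over moduli, this becomes admissible once $H \ge X^{1-1/\ell+\epsilon}$, giving the second branch. Taking the better of the two yields $\theta_{\mathrm{LZ}}(k,\ell) = \min(5/(6k), 1/\ell)$.

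The main obstacle is the prime minor-arc bound: the exponent $5/(6k)$ is dictated by the interplay between the Type I and Type II bilinear sums in Vaughan's decomposition, and pushing it for $k \ge 2$ is notoriously hard. This is precisely the bottleneck that \cref{thm:main_12} circumvents in the case $(k,\ell)=(1,2)$ by invoking sharper exponential-sum estimates specific to linear prime sums.
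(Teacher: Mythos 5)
First, note that the paper does not prove \cref{thm:LZ_kl} at all: it is quoted verbatim from Languasco--Zaccagnini \cite{LZ_kl2}, and the only argument supplied here is the Remark reducing their restricted counting function $\tilde{R}_{k,\ell}$ to $R_{k,\ell}$. So your task was really to reconstruct the Languasco--Zaccagnini proof, and while your overall frame (circle method for the short-interval average) is the right one, the decisive step in your sketch does not work.

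The gap is in the minor arcs. You invoke a pointwise bound $\sup_{\mathfrak{m}}|S(\alpha)|\ll P^{1-5/(6k)+\epsilon}$; no such estimate is known or plausible. For $k=2$ it would read $\ll P^{7/12+\epsilon}$, whereas Kumchev-type bounds via Vaughan's identity and Weyl differencing save only a small power such as $P^{-1/8}$ on standard minor arcs, and with major arcs as thin as $q\le B^{O(1)}$ one gets no power saving whatsoever (only a $B^{-O(1)}$ saving). Moreover, even granting your bound, the Hölder splitting $\sup_{\mathfrak m}|S|\cdot\|T\|_2\|U\|_2\ll P^{1-5/(6k)}M^{1/2}H^{1/2}$ does not close: requiring this to be $\ll HX^{\frac1k+\frac1\ell-1}B^{-1}$ forces $H\ge X^{2-\frac{5}{3k^2}-\frac1\ell}$, which for $k=\ell=2$ is $H\ge X^{13/12}$, vacuous. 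The exponent $\frac{5}{6k}$ in $\theta_{LZ}$ does not come from a pointwise exponential-sum estimate at all: in Languasco--Zaccagnini's argument it originates from an $L^2$ mean-value theorem of Saffari--Vaughan / Languasco--Perelli type for $S_k(\alpha)-T_k(\alpha)$ (with $T_k$ the expected main term) on a neighbourhood of the origin, which ultimately rests on the Huxley zero-density exponent $\frac{12}{5}$ — the same input that, in the present paper's direct approach, appears through \cref{lem:HI}. Their proof replaces $S_k$ by $T_k$ over the whole circle and controls the difference in mean square; the $\frac1\ell$ branch then comes from the trivial and $L^2$ treatments of the power sum $T_\ell$, not from the Hua/Weyl/Bombieri--Vinogradov combination you gesture at, which as written does not visibly produce the constraint $H\ge X^{1-\frac1\ell+\epsilon}$.
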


\begin{remark}
In \cite{LZ_kl2}, Languasco and Zaccagnini considered
\[
\tilde{R}_{k,\ell}(N)
=
\sum_{\substack{p^k+n^\ell=N\\N/B<p^k,n^{\ell}\le N}}\log p
\]
instead of \cref{def:R}. However, new restrictions
\[
N/B<p^k,n^{\ell}\le N
\]
are introduced just for some technical simplicity of the proof.
Indeed, it is easy to replace $\tilde{R}_{k,\ell}(N)$ by $R_{k,\ell}(N)$
assuming
$X^{1-\min(\frac{1}{k},\frac{1}{\ell})}\le H\le X$
as follows. If we remove the restriction $p^k>N/B$, then the resulting error is bounded by
\[
\ll
\sum_{\substack{X<p^k+m^\ell\le X+H\\ p^{k}\le 2X/B}}\log p
\ll
L\sum_{p^{k}\le 2X/B}\sum_{X-p^k<m^{\ell}\le X+H-p^k}1.
\]
By \cref{lem:power_short} below and the assuminption $H\ge X^{1-\frac{1}{\ell}}$, this is
\[
\ll
HL\sum_{p^{k}\le 2X/B}(X-p^k)^{\frac{1}{\ell}-1}
\ll
HX^{\frac{1}{\ell}-1}(X/B)^{\frac{1}{k}}
\ll
HX^{\frac{1}{k}+\frac{1}{\ell}-1}B^{-\frac{1}{k}},
\]
which is bounded by the error term of \cref{thm:LZ_kl} up to replacing the constant $c$
in \cref{def:B}. The restriction $n^k>N/B$ can be removed in the same way.
\end{remark}

Actually, \cref{thm:main_12} above is a special case of the following general result:
\begin{theorem}
\label{thm:main_kl}
For positive integers $k,\ell$ with $\ell\ge2$, and
real numbers $X,H,\epsilon$ with $4\le H\le X$ and $\epsilon>0$,
we have the asymptotic formula \cref{LZ_asymp_kl}
provided
\[
X^{\Theta(k,\ell)+\epsilon}\le H\le X^{1-\epsilon},
\]
where $\Theta(k,\ell)$ is defined by
\begin{equation}
\label{def:Theta}
\begin{aligned}
\Theta(k,\ell)
&=
1-\theta(k,\ell),\quad
\theta(k,\ell)=
\max(\theta_{A}(k,\ell),\theta_{B}(k,\ell)),\\
\theta_{A}(k,\ell)
&=
\min\left(\frac{\lambda_1(\ell)}{k},\frac{\lambda_2(k,\ell)}{k},\frac{k}{\ell(k-1)}\right),\\
\theta_{B}(k,\ell)
&=
\min\left(\frac{5}{12k},\frac{k}{\ell(k-1)}\right),\\
\lambda_1(\ell)
&=
\left\{
\begin{array}{>{\displaystyle}ll}
\frac{\ell}{2(\ell-1)}
&(\text{if $2\le\ell\le3$}),\\[3mm]
\frac{3\ell^2+2\sqrt{3}\ell^{\frac{3}{2}}+\ell}{(3\ell-1)^2}
&(\text{if $3\le\ell\le\frac{25}{3}$}),\\[3mm]
\frac{5\ell}{4(3\ell-5)}
&(\text{if $\ell\ge\frac{25}{3}$}),
\end{array}
\right.\\[2mm]
\lambda_2(k,\ell)
&=
\left\{
\begin{array}{>{\displaystyle}ll}
\frac{2}{3}\left(\frac{k}{\ell}+\frac{1}{2}\right)
&(\text{if $\frac{5}{8}\ell\le k$}),\\[3mm]
\frac{10}{49}+\frac{2k}{7\ell}+\frac{4}{7}\sqrt{\frac{6}{7}\left(\frac{k}{\ell}-\frac{1}{7}\right)}
&(\text{if $\frac{31}{96}\ell\le k\le\frac{5}{8}\ell$}),\\[3mm]
\frac{10}{11}\left(\frac{k}{\ell}+\frac{1}{4}\right)
&(\text{if $k\le\frac{31}{96}\ell$}),
\end{array}
\right.
\end{aligned}
\end{equation}
and the implicit constant depends on $k,\ell$ and $\epsilon$.
\end{theorem}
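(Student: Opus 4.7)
The plan is to apply the Hardy--Littlewood circle method in the Languasco--Zaccagnini framework that produced Theorem B. One writes
\[
\sum_{X<N\le X+H}R_{k,\ell}(N)
=\int_{\mathfrak{M}}S(\alpha)T(\alpha)U(-\alpha)\,d\alpha
+\int_{\mathfrak{m}}S(\alpha)T(\alpha)U(-\alpha)\,d\alpha,
\]
where $S(\alpha)=\sum_{p}(\log p)e(p^k\alpha)$, $T(\alpha)=\sum_{n}e(n^\ell\alpha)$, and $U$ is the Fourier transform of the indicator of $(X,X+H]$; the arcs $\mathfrak{M}$ and $\mathfrak{m}$ come from a Farey dissection calibrated to $H$. (Alternatively, one uses the smoothed version with weights $e^{-n^k/N}$ and Mellin inversion, where the main term arises from the pole of $\zeta$ at $s=1$ combined with a Beta integral producing the factor $\GG{1}$.) On the major arcs, one applies Siegel--Walfisz together with zero-density estimates for Dirichlet $L$-functions (hence the paper's keywords) to extract the main term $\GG{1}HX^{\frac{1}{k}+\frac{1}{\ell}-1}$ with relative error of size $B^{-1}$; the threshold $H\ge X^{k/(\ell(k-1))+\epsilon}$ that sits inside both $\theta_A$ and $\theta_B$ is precisely what is required for the tail of the major arc expansion to be absorbed into the main term.

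The heart of the matter is the minor arc bound
\[
\int_{\mathfrak{m}}|S(\alpha)T(\alpha)U(\alpha)|\,d\alpha
\ll HX^{\frac{1}{k}+\frac{1}{\ell}-1}B^{-1}.
\]
We combine pointwise estimates with the $L^{2}$ mean values $\int_{0}^{1}|T|^{2}\,d\alpha\ll X^{\frac{2}{\ell}-1}$ and $\int_{0}^{1}|U|^{2}\,d\alpha\le H$ via H\"older's inequality. Two essentially different H\"older splittings produce the two strategies $\theta_{A}$ and $\theta_{B}$, and the final $\theta=\max(\theta_A,\theta_B)$ selects whichever is better in the given regime of $k/\ell$. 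Strategy $A$ draws the bulk of the savings from bounds on $T$, or on the product $ST$: the three subcases of $\lambda_{1}(\ell)$ correspond to the best available Weyl / van der Corput / Robert--Sargos estimates for $T$ in the ranges $2\le\ell\le3$, $3\le\ell\le\tfrac{25}{3}$, and $\ell\ge\tfrac{25}{3}$, while the three subcases of $\lambda_{2}(k,\ell)$ come from optimizing the H\"older exponents as the ratio $k/\ell$ crosses the thresholds $\tfrac{5}{8}$ and $\tfrac{31}{96}$. Strategy $B$ instead uses a zero-density input for $S$ to obtain a mean-value bound sharper than Languasco--Zaccagnini's $5/(6k)$, producing the factor $5/(12k)$ after the appropriate H\"older balance against $T$ and $U$.

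The main obstacle will be the intermediate subcase $\tfrac{31}{96}\le k/\ell\le\tfrac{5}{8}$ of Strategy $A$, where the irrational exponent
\[
\lambda_{2}(k,\ell)=\frac{10}{49}+\frac{2k}{7\ell}+\frac{4}{7}\sqrt{\frac{6}{7}\left(\frac{k}{\ell}-\frac{1}{7}\right)}
\]
emerges from solving a two-parameter constrained optimization of H\"older exponents; in addition to performing this optimization, one must verify continuity of the three pieces of $\lambda_{2}$ at the boundaries $k=\tfrac{5}{8}\ell$ and $k=\tfrac{31}{96}\ell$, check that all exponential sum bounds are uniform in $k$ and $\ell$, and confirm that the implicit constants feed through the $B^{-1}$ savings. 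Once this bookkeeping is in place, the case $(k,\ell)=(1,2)$ of Theorem~\ref{thm:main_12} follows from the Strategy $A$ intermediate formula by direct substitution.
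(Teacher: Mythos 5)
There is a genuine gap here: your proposal is an outline of a circle-method attack whose decisive step --- the minor-arc bound $\int_{\mathfrak{m}}|S(\alpha)T(\alpha)U(\alpha)|\,d\alpha\ll HX^{\frac{1}{k}+\frac{1}{\ell}-1}B^{-1}$ --- is never actually established; it is only asserted that ``two H\"older splittings'' yield $\theta_A$ and $\theta_B$. That assertion carries the entire content of the theorem, and nothing in your sketch explains how a H\"older balance of $\int|T|^2$, $\int|U|^2$ and pointwise bounds would beat the Languasco--Zaccagnini exponent $\min(\frac{5}{6k},\frac{1}{\ell})$, let alone produce the specific functions $\lambda_1(\ell)$ and $\lambda_2(k,\ell)$. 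Your attributions of the exponents are also not tenable: the three subcases of $\lambda_1$ and $\lambda_2$ do not come from Weyl/van der Corput/Robert--Sargos bounds for $T$ or from an optimization of H\"older exponents, and $\frac{5}{12k}$ is not a refined minor-arc mean value for $S$. The thresholds $\frac{25}{48}$, $\frac{3}{4}$ hidden in $\lambda_1,\lambda_2$ (equivalently $k=\frac{31}{96}\ell$, $k=\frac{5}{8}\ell$) are exactly the breakpoints of the Huxley--Ingham zero-density exponent $c(\alpha)$, and $\frac{5}{12}=1-\frac{7}{12}$ is Huxley's prime-number-theorem-in-short-intervals exponent; no minor-arc H\"older scheme would reproduce these numbers, which is a strong signal that the proposed route does not lead to the stated result.

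The paper in fact avoids the circle method entirely. It reduces the average to $\sum_{n^\ell\le X}\bigl(\psi((X+H-n^{\ell})^{\frac{1}{k}})-\psi((X-n^{\ell})^{\frac{1}{k}})\bigr)$, inserts the truncated von Mangoldt explicit formula, and then applies Poisson summation (the Fourier expansion of $\{u\}-\frac{1}{2}$) together with first- and second-derivative estimates for exponential integrals to extract cancellation over the $n^{\ell}$; the resulting sums over zeros $\sum_{K<|\gamma|\le 2K}Y^{\beta}$ are controlled by the Korobov--Vinogradov zero-free region and the Huxley--Ingham density estimate, giving the function $\phi(\lambda)$, and $\lambda_1,\lambda_2$ arise as the solutions of $\phi(\lambda_1)-\frac{1}{\ell}\lambda_1=1$ and $\phi(\lambda_2)+\frac{1}{2}\lambda_2=1+\frac{k}{\ell}$. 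The exponent $\theta_B$ comes from a separate, direct application of the short-interval prime number theorem to each inner difference of $\psi$. If you want to salvage a circle-method proof you would need to supply an actual minor-arc argument achieving these exponents, which is not what your outline does; as written, the proposal does not prove the theorem.
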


We prove \cref{thm:main_kl} at the end of \cref{section:proof}.

The mainly concerned case of \cref{thm:main_kl} is the case
\begin{equation}
\label{maincase}
\theta_{A}(k,\ell)>\theta_{B}(k,\ell),\theta_{LZ}(k,\ell).
\end{equation}
We compare these three exponents in \cref{section:comparison}.
It turns out that \cref{maincase} occurs for
\begin{equation}
\label{comparison}
\begin{aligned}
\ell=2,\quad&\text{or}\\
3\le\ell\le 9\quad&\text{and}\quad \frac{5}{24}\ell<k<\lambda_1(\ell)\ell,\quad\text{or}\\
\ell\ge10\quad&\text{and}\quad\frac{5}{24}\ell+\frac{1}{24}\sqrt{\ell(25\ell-240)}<k<\lambda_1(\ell)\ell.\\
\end{aligned}
\end{equation}
Furthermore, in \cref{section:comparison}, we also see that
\begin{equation}
\label{theta_determined}
\theta(k,\ell)
=
\left\{
\begin{array}{>{\displaystyle}cl}
\frac{\lambda_2(k,\ell)}{k}&(\text{for $(k,\ell)=(1,2),(1,3),(1,4)$}\\
&\hspace{8mm}\text{$(2,5),(2,6),(2,7),(2,8),(2,9)$}),\\[2mm]
\theta_{B}(k,\ell)&(\text{for $k=1$ and $\ell\ge5$}),\\[2mm]
\min\left(\frac{\lambda_1(\ell)}{k},\frac{k}{\ell(k-1)}\right)&(\text{otherwise}).
\end{array}
\right.
\end{equation}
In \cref{Table:Theta},
we list up which exponent gives the best result in the range $1\le k\le 10$ and $2\le\ell\le20$.
The case \cref{maincase} occurs for at least one $k$ for each $\ell$ since
\[
\frac{5}{24}\ell+\frac{1}{24}\sqrt{\ell(25\ell-240)}
=
\frac{5}{24}\ell+\frac{5}{24}\ell\sqrt{1-\frac{48}{5\ell}}
\le
\frac{5}{12}\ell-1
\quad\text{and}\quad
\lambda_1(\ell)\ell>\frac{5}{12}\ell.
\]
However, it happens only in the small neighborhood of the line $k=\frac{5}{12}\ell$ for $\ell\ge3$.
In contrast, for $\ell=2$, \cref{maincase} is always the case.
In particular, as a corollary of \cref{thm:main_kl}, we can obtain the exponent $1-\frac{1}{k}$
unconditionally for the case $k\ge2$ and $\ell=2$, which was obtained under the Riemann hypothesis
by Languasco and Zaccagnini~\cite[Theorem~1.4]{LZ_kl}:
\begin{theorem}
\label{thm:main_k2}
For positive integer $k$ with $k\ge2$,
and real numbers $X,H,\epsilon$
with $4\le H\le X$ and $\epsilon>0$,
we have the asymptotic formula \cref{LZ_asymp_kl} with $\ell=2$
provided
\[
X^{1-\frac{1}{k}+\epsilon}\le H\le X^{1-\epsilon},
\]
where the implicit constant depends on $k$ and $\epsilon$.
\end{theorem}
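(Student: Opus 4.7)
The plan is to derive Theorem~\ref{thm:main_k2} immediately from Theorem~\ref{thm:main_kl} by specializing to $\ell=2$ and verifying that the exponent $\Theta(k,2)$ defined in \eqref{def:Theta} collapses to $1-\tfrac{1}{k}$ for all $k\ge2$. In other words, once we check that $\theta(k,2)=\tfrac{1}{k}$, the assumption $X^{1-\frac{1}{k}+\epsilon}\le H\le X^{1-\epsilon}$ is exactly the hypothesis $X^{\Theta(k,2)+\epsilon}\le H\le X^{1-\epsilon}$ of Theorem~\ref{thm:main_kl}, and so the asymptotic formula \eqref{LZ_asymp_kl} follows directly.

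First I would evaluate the three ingredients of $\theta_A(k,2)$. Since $\ell=2$ lies in the range $2\le\ell\le3$, the definition of $\lambda_1$ gives $\lambda_1(2)=\tfrac{2}{2(2-1)}=1$, so the first term is $\tfrac{\lambda_1(2)}{k}=\tfrac{1}{k}$. For $k\ge2$ we have $k\ge\tfrac{5}{8}\cdot2$, so the first branch of $\lambda_2$ applies and yields $\lambda_2(k,2)=\tfrac{2}{3}\bigl(\tfrac{k}{2}+\tfrac{1}{2}\bigr)=\tfrac{k+1}{3}$, hence the second term is $\tfrac{\lambda_2(k,2)}{k}=\tfrac{k+1}{3k}$. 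The third term is $\tfrac{k}{\ell(k-1)}=\tfrac{k}{2(k-1)}$.

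Next I would verify that $\tfrac{1}{k}$ is the minimum of these three quantities. The inequality $\tfrac{k+1}{3k}\ge\tfrac{1}{k}$ is equivalent to $k\ge2$, and $\tfrac{k}{2(k-1)}\ge\tfrac{1}{k}$ rewrites as $k^2-2k+2\ge0$, which holds for every real $k$ since the discriminant is negative. Thus $\theta_A(k,2)=\tfrac{1}{k}$ for all $k\ge2$. Since $\theta_B(k,2)\le\tfrac{5}{12k}<\tfrac{1}{k}$, taking the maximum gives $\theta(k,2)=\tfrac{1}{k}$, so $\Theta(k,2)=1-\tfrac{1}{k}$.

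There is no genuine obstacle here: the content is really the general Theorem~\ref{thm:main_kl}, and Theorem~\ref{thm:main_k2} amounts to reading off which branch of \eqref{def:Theta} is active when $\ell=2$. The only care needed is the (trivial) case analysis above to confirm that $\tfrac{\lambda_1(2)}{k}=\tfrac{1}{k}$ dominates the other two terms of $\theta_A$ for $k\ge2$ and that $\theta_B$ is strictly smaller, so that the maximum in the definition of $\theta(k,2)$ is realized by $\tfrac{\lambda_1(\ell)}{k}$. This matches the description in \eqref{theta_determined} for the regime $k\ge2$, $\ell=2$, and completes the reduction to Theorem~\ref{thm:main_kl}.
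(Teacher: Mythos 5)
Your proposal is correct and follows essentially the same route as the paper: both deduce the result from Theorem~\ref{thm:main_kl} by verifying that $\lambda_1(2)=1$, that $\lambda_2(k,2)/k$ and $k/(2(k-1))$ are at least $1/k$, and that $\theta_B(k,2)<\theta_A(k,2)$, so $\theta(k,2)=1/k$. The only cosmetic difference is that the paper invokes Lemma~\ref{lem:comparisonAB} to dispose of $\theta_B$, whereas you check the inequality $\theta_B(k,2)\le\frac{5}{12k}<\frac{1}{k}$ directly.
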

We prove \cref{thm:main_k2} at the end of \cref{section:proof}.

\begin{table}[htb]
\centering
\setlength{\tabcolsep}{3pt}
\caption{The best exponents in $\theta_A,\theta_B,\theta_{LZ}$}
\label{Table:Theta}
\newcolumntype{C}[1]{>{\centering\arraybackslash}m{#1}}
\begin{tabular}{c|*{10}{C{4.8mm}}}
$\ell\mathrel{\backslash}k$&1&2&3&4&5&6&7&8&9&10\\
\hline
2&$A$&$A$&$A$&$A$&$A$&$A$&$A$&$A$&$A$&$A$\\
3&$A$&$A$&$LZ$&$LZ$&$LZ$&$LZ$&$LZ$&$LZ$&$LZ$&$LZ$\\
4&$A$&$A$&$LZ$&$LZ$&$LZ$&$LZ$&$LZ$&$LZ$&$LZ$&$LZ$\\
5&$B$&$A$&$A$&$LZ$&$LZ$&$LZ$&$LZ$&$LZ$&$LZ$&$LZ$\\
6&$B$&$A$&$A$&$LZ$&$LZ$&$LZ$&$LZ$&$LZ$&$LZ$&$LZ$\\
7&$B$&$A$&$A$&$LZ$&$LZ$&$LZ$&$LZ$&$LZ$&$LZ$&$LZ$\\
8&$B$&$A$&$A$&$A$&$LZ$&$LZ$&$LZ$&$LZ$&$LZ$&$LZ$\\
9&$B$&$A$&$A$&$A$&$LZ$&$LZ$&$LZ$&$LZ$&$LZ$&$LZ$\\
10&$B$&$B$&$A$&$A$&$LZ$&$LZ$&$LZ$&$LZ$&$LZ$&$LZ$\\
11&$B$&$B$&$B$&$A$&$A$&$LZ$&$LZ$&$LZ$&$LZ$&$LZ$\\
12&$B$&$B$&$B$&$A$&$A$&$LZ$&$LZ$&$LZ$&$LZ$&$LZ$\\
13&$B$&$B$&$B$&$B$&$A$&$A$&$LZ$&$LZ$&$LZ$&$LZ$\\
14&$B$&$B$&$B$&$B$&$A$&$A$&$LZ$&$LZ$&$LZ$&$LZ$\\
15&$B$&$B$&$B$&$B$&$B$&$A$&$A$&$LZ$&$LZ$&$LZ$\\
16&$B$&$B$&$B$&$B$&$B$&$A$&$A$&$LZ$&$LZ$&$LZ$\\
17&$B$&$B$&$B$&$B$&$B$&$A$&$A$&$LZ$&$LZ$&$LZ$\\
18&$B$&$B$&$B$&$B$&$B$&$B$&$A$&$A$&$LZ$&$LZ$\\
19&$B$&$B$&$B$&$B$&$B$&$B$&$A$&$A$&$LZ$&$LZ$\\
20&$B$&$B$&$B$&$B$&$B$&$B$&$B$&$A$&$A$&$LZ$\\
\end{tabular}
\end{table}

Languasco and Zaccagnini applied the circle method
to prove \cref{thm:LZ_12} and \cref{thm:LZ_kl}.
In this paper, we deal with the average~\cref{short_avg} rather more directly.
Our argument is similar to the classical proof of the prime number theorem in short intervals.
We first insert the von Mangoldt explicit formula.
Then, we apply the Poisson summation formula in order
to detect the cancellations over the sequence $n^{\ell}$,
which is not involved in the proof of Languasco and Zaccagnini.
Finally, we estimate the sum over non-trivial zeros
of the Riemann zeta function by using the Huxley--Ingham zero density estimate.

\section{Notations and conventions}
We use the following notations and conventions.

As usual, let $\Lambda(n)$ be the von Mangoldt function and
\begin{equation}
\label{def:psi}
\psi(x)=\sum_{m\le x}\Lambda(m).
\end{equation}
We denote the Riemann zeta function by $\zeta(s)$.
By $\rho=\beta+i\gamma$, we denote non-trivial zeros
of $\zeta(s)$ with the real part $\beta$ and the imaginary part $\gamma$.
For a real number $\alpha$ and $T$ with $T\ge0$,
let $N(\alpha,T)$ be the number of non-trivial zeros $\rho=\beta+i\gamma$
of $\zeta(s)$ in the rectangle $\alpha\le\beta\le1$ and $|\gamma|\le T$ counted with multiplicity.

For a complex valued function $f$ defined over an interval $[a,b]$,
let $V_{[a,b]}(f)$ be the total variation of $f$ over $[a,b]$,
and
\begin{equation}
\label{def:norm}
\|f\|=\|f\|_{BV([a,b])}=\sup_{x\in[a,b]}|f(x)|+V_{[a,b]}(f).
\end{equation}
For a real number $x$, let $e(x)=\exp(2\pi i x)$
$[x]$ be the largest integer not exceeding $x$,
and $\{x\}=x-[x]$.

The letters $X,H,Q$ denote real numbers,
and they are always assumed to satisfy
\begin{equation}
\label{Q_range}
4\le H\le X,\quad
X\le Q\le X+H.
\end{equation}
The letters $c_0, c_1>0$ denote some small absolute constants
and $c$ denotes a constant with $0<c\le 1$ which may depend on $k,\ell$ and $\epsilon$.
The letters $B$ and $L$ are used for the quantities
\[
B=\exp\left(c\left(\frac{\log X}{\log\log X}\right)^{\frac{1}{3}}\right),\quad
L=\log X.
\]
For positive integers $k,\ell$ and a non-zero complex number $\alpha$, we let
\begin{equation}
\label{def:S}
S_{\alpha}(Q)
=S_{\alpha,k,\ell}(Q;X)
=\frac{1}{\alpha}\sum_{n^{\ell}\le X}(Q-n^{\ell})^{\frac{\alpha}{k}},\quad
S(Q)=S_{1}(Q).
\end{equation}

Let $\phi(\lambda)$ be a function defined over $[0,+\infty)$ by
\begin{equation}
\label{def:phi}
\phi(\lambda)
=
\left\{
\begin{array}{>{\displaystyle}ll}
\frac{3}{5}\lambda+\frac{3}{4}
&(\text{if $0\le\lambda\le\frac{25}{48}$}),\\[3mm]
3\lambda+2(1-\sqrt{3\lambda})&(\text{if $\frac{25}{48}\le\lambda\le\frac{3}{4}$}),\\[3mm]
\lambda+\frac{1}{2}&(\text{if $\frac{3}{4}\le\lambda\le1$}).
\end{array}
\right.
\end{equation}
This function will be used for estimating sums over non-trivial zeros of $\zeta(s)$.
For real numbers $k,\ell$ with $k\ge1$ and $\ell\ge2$,
we also introduce two real-valued functions $\lambda_1(\ell)$ and $\lambda_2(k,\ell)$
as in \cref{thm:main_kl} by
\begin{equation}
\label{def:lambda}
\begin{aligned}
\lambda_1(\ell)
&=
\left\{
\begin{array}{>{\displaystyle}ll}
\frac{\ell}{2(\ell-1)}
&(\text{if $2\le\ell\le3$}),\\[3mm]
\frac{3\ell^2+2\sqrt{3}\ell^{\frac{3}{2}}+\ell}{(3\ell-1)^2}
&(\text{if $3\le\ell\le\frac{25}{3}$}),\\[3mm]
\frac{5\ell}{4(3\ell-5)}
&(\text{if $\ell\ge\frac{25}{3}$}),
\end{array}
\right.\\[3mm]
\lambda_2(k,\ell)
&=
\left\{
\begin{array}{>{\displaystyle}ll}
\frac{2}{3}\left(\frac{k}{\ell}+\frac{1}{2}\right)
&(\text{if $\frac{5}{8}\ell\le k$}),\\[3mm]
\frac{10}{49}+\frac{2k}{7\ell}+\frac{4}{7}\sqrt{\frac{6}{7}\left(\frac{k}{\ell}-\frac{1}{7}\right)}
&(\text{if $\frac{31}{96}\ell\le k\le\frac{5}{8}\ell$}),\\[3mm]
\frac{10}{11}\left(\frac{k}{\ell}+\frac{1}{4}\right)
&(\text{if $k\le\frac{31}{96}\ell$}).
\end{array}
\right.
\end{aligned}
\end{equation}
These functions are used in the exponent of the admissible ranges for $X$ and $H$.

We have several expressions of the form
\[
\min\left(A,\infty\right).
\]
As a convention, we define this quantity by
\[
\min\left(A,\infty\right)
=
A.
\]

If Theorem or Lemma is stated
with the phrase ``where the implicit constant depends on $a,b,c,\ldots$'',
then every implicit constant in the corresponding proof
may also depend on $a,b,c,\ldots$ even without special mentions.

\section{Preliminary lemmas}
In this section, we prepare some lemmas for the proof of \cref{thm:main_kl}.
We start with some simple estimates for short interval sums without prime numbers.

\begin{lemma}
\label{lem:power_short}
For positive integer $\ell$ and real numbers $X,H$ with $X,H\ge2$,
\[
\sum_{X<n^{\ell}\le X+H}1\ll HX^{\frac{1}{\ell}-1}+1,
\]
where the implicit constant is absolute.
\end{lemma}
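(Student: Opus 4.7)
The plan is to rewrite the counting condition as a bound on integers in a real interval and then control the length of that interval by an elementary derivative estimate. Specifically, the condition $X<n^{\ell}\le X+H$ is equivalent to $X^{1/\ell}<n\le (X+H)^{1/\ell}$, so the sum in question is bounded above by
\[
(X+H)^{1/\ell}-X^{1/\ell}+1.
\]

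To estimate the difference $(X+H)^{1/\ell}-X^{1/\ell}$, I would write it as the integral
\[
\int_{X}^{X+H}\frac{1}{\ell}\,t^{\frac{1}{\ell}-1}\,dt.
\]
Since $\ell\ge 1$ (so that the exponent $\frac{1}{\ell}-1$ is non-positive), the integrand is a non-increasing function of $t$ on $[X,X+H]$, and thus the integral is at most $\frac{H}{\ell}X^{\frac{1}{\ell}-1}\ll HX^{\frac{1}{\ell}-1}$. Combining with the trivial $+1$ from passing between integer counts and interval lengths yields the asserted bound.

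There is no real obstacle here; the only mild subtlety is ensuring the $+1$ term is retained, because for very small $H$ (e.g.\ $H$ of size $1$ with $X$ large, where $HX^{\frac{1}{\ell}-1}$ may be less than $1$) the interval $(X^{1/\ell},(X+H)^{1/\ell}]$ can still contain a single integer. This is exactly what the additive $+1$ in the statement absorbs, so the argument is self-contained and the implicit constant is absolute (indeed one may take the constant to be $1/\ell\le 1$ on the main term).
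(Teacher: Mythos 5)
Your proposal is correct and follows essentially the same route as the paper: both bound the count by $[(X+H)^{1/\ell}]-[X^{1/\ell}]\le (X+H)^{1/\ell}-X^{1/\ell}+1$ and then estimate the difference via the integral $\frac{1}{\ell}\int_X^{X+H}u^{\frac{1}{\ell}-1}\,du\ll HX^{\frac{1}{\ell}-1}$ using monotonicity of the integrand. Nothing is missing.
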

\begin{proof}
By using $x-1<[x]\le x$, we see that
\begin{align}
\sum_{X<n^{\ell}\le X+H}1
&=
[(X+H)^{\frac{1}{\ell}}]-[X^{\frac{1}{\ell}}]
\le
(X+H)^{\frac{1}{\ell}}-X^{\frac{1}{\ell}}+1\\
&=
\frac{1}{\ell}\int_{X}^{X+H}u^{\frac{1}{\ell}-1}du+1
\ll
HX^{\frac{1}{\ell}-1}+1.
\end{align}
This completes the proof.
\end{proof}

\begin{lemma}
\label{lem:main_term_diff}
For positive integers $k,\ell$ and real numbers $X,H$ with $4\le H\le X$,
\begin{align}
&\GGG{1}\left((X+H)^{\frac{1}{k}+\frac{1}{\ell}}-X^{\frac{1}{k}+\frac{1}{\ell}}\right)\\
&=
\GG{1}HX^{\frac{1}{k}+\frac{1}{\ell}-1}
+
O(H^2X^{\frac{1}{k}+\frac{1}{\ell}-2}),
\end{align}
where the implicit constant is absolute.
\end{lemma}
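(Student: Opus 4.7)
The plan is to reduce the statement to a single Taylor expansion after exploiting the gamma-function recurrence. Setting $\alpha = \tfrac{1}{k}+\tfrac{1}{\ell} \in (0,1]$ and applying $\Gamma(\alpha+1) = \alpha\,\Gamma(\alpha)$ produces the clean identity
\[
\GGG{1} = \frac{1}{\alpha}\GG{1},
\]
so the left-hand side of the lemma equals $\frac{\GG{1}}{\alpha}\bigl((X+H)^{\alpha} - X^{\alpha}\bigr)$ and the only analytic content left is the expansion of this power difference.

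For that expansion I would write
\[
(X+H)^{\alpha} - X^{\alpha} = \alpha\int_X^{X+H} u^{\alpha-1}\,du
\]
and substitute $u^{\alpha-1} = X^{\alpha-1} + (\alpha-1)\int_X^{u} v^{\alpha-2}\,dv$ to isolate the linear term. Since $0<\alpha\le 1$ and $H\le X$, the remainder is controlled by $\alpha(1-\alpha)X^{\alpha-2}H^2$. Multiplying through by $\GG{1}/\alpha$ turns the main term into $\GG{1}\,HX^{\alpha-1}$ and the remainder into $O\bigl(\GG{1}(1-\alpha)X^{\alpha-2}H^2\bigr)$.

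To secure an \emph{absolute} implicit constant it remains to check that $\GG{1}$ itself is bounded uniformly in $k,\ell\ge 1$. Using $\Gamma(1/k)=k\,\Gamma(1+1/k)$ and the analogous identity in $\ell$, one has
\[
\GG{1} = \frac{\Gamma(1+1/k)\,\Gamma(1+1/\ell)}{\Gamma(\alpha)},
\]
whose numerator is bounded since $\Gamma$ is continuous on $[1,2]$ and whose denominator is bounded below by $\min_{y>0}\Gamma(y)>0$. Combined with $(1-\alpha)\le 1$, this absorbs the error into an absolute constant. There is no substantive obstacle: the proof is the two-line gamma identity together with one Taylor expansion, so the main point is just being careful to let the explicit factors $\alpha$ cancel before bounding the remainder.
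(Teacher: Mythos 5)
Your proof is correct and takes essentially the same route as the paper's: the paper's first step is to rewrite the left-hand side as $\GG{1}\int_X^{X+H}u^{\frac{1}{k}+\frac{1}{\ell}-1}\,du$ by the fundamental theorem of calculus, which is exactly your gamma recurrence in integrated form, and it then expands the integrand via the mean value theorem and bounds the Gamma-quotient by an absolute constant, just as you do. The only slip is that $\alpha=\frac{1}{k}+\frac{1}{\ell}$ lies in $(0,2]$ rather than $(0,1]$ (e.g.\ $k=\ell=1$ gives $\alpha=2$), but your estimates only require $\alpha\le 2$ and $|1-\alpha|\le 1$, so nothing breaks.
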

\begin{proof}
By the fundamental theorem of calculus,
\begin{equation}
\label{main_term_diff:int}
\begin{aligned}
&\GGG{1}\left((X+H)^{\frac{1}{k}+\frac{1}{\ell}}-X^{\frac{1}{k}+\frac{1}{\ell}}\right)\\
&=
\GG{1}\int_{X}^{X+H}u^{\frac{1}{k}+\frac{1}{\ell}-1}du.
\end{aligned}
\end{equation}
For $X<u\le X+H$,
by using the mean value theorem, we have
\[
u^{\frac{1}{k}+\frac{1}{\ell}-1}
=
X^{\frac{1}{k}+\frac{1}{\ell}-1}
+
O\left(HX^{\frac{1}{k}+\frac{1}{\ell}-2}\right).
\]
Thus, the integral in \cref{main_term_diff:int} can be rewritten as
\[
\int_{X}^{X+H}u^{\frac{1}{k}+\frac{1}{\ell}-1}du
=
HX^{\frac{1}{k}+\frac{1}{\ell}-1}
+
O\left(H^2X^{\frac{1}{k}+\frac{1}{\ell}-2}\right).
\]
On inserting this formula into  \cref{main_term_diff:int},
and noting that
\[
\GG{1}
\ll
\frac{1}{k\ell}\frac{(\frac{1}{k})^{-1}(\frac{1}{\ell})^{-1}}{(\frac{1}{k}+\frac{1}{\ell})^{-1}}
\ll
\frac{1}{k}+\frac{1}{\ell}
\ll1,
\]
we arrive at the lemma. 
\end{proof}

\begin{lemma}
\label{lem:main_term}
For positive integers $k,\ell$
and real numbers $X,H$ with $4\le H\le X$,
\begin{align}
S(X+H)-S(X)
=
\GG{1} HX^{\frac{1}{k}+\frac{1}{\ell}-1}
+
O\left(H^{1+\frac{1}{k}}X^{\frac{1}{\ell}-1}+H^{\frac{1}{k}}\right),
\end{align}
where $S(Q)$ is defined by
\[
S(Q)=\sum_{n^{\ell}\le X}\left(Q-n^{\ell}\right)^{\frac{1}{k}}
\]
as in \cref{def:S} and the implicit constant is absolute.
\end{lemma}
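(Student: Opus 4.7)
\medskip

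\noindent\textbf{Proof proposal.} The plan is to rewrite
\[
S(X+H)-S(X)=\sum_{n^{\ell}\le X}g(n),\qquad
g(t)=(X+H-t^{\ell})^{\frac{1}{k}}-(X-t^{\ell})^{\frac{1}{k}},
\]
and replace this sum by the integral $\int_{0}^{X^{1/\ell}}g(t)\,dt$, which is an incomplete Beta function after the substitution $u=t^{\ell}$.

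First I would verify that $g$ is non-decreasing on $[0,X^{1/\ell}]$: for $k=1$, $g\equiv H$; for $k\ge 2$, since $u\mapsto u^{1/k}$ has decreasing derivative, a direct differentiation shows $g'(t)\ge 0$. Consequently $V_{[0,X^{1/\ell}]}(g)=g(X^{1/\ell})-g(0)=H^{\frac{1}{k}}-((X+H)^{\frac{1}{k}}-X^{\frac{1}{k}})$, and in particular $\max|g|+V(g)\ll H^{\frac{1}{k}}$. Standard Abel summation (comparing $g(n)$ with $\int_{n-1}^{n}g(t)\,dt$ on each unit interval) then yields
\[
\sum_{n^{\ell}\le X}g(n)
=\int_{0}^{X^{1/\ell}}g(t)\,dt+O\!\left(H^{\frac{1}{k}}\right),
\]
where the $O(H^{1/k})$ absorbs both the bounded-variation error and the fractional endpoint at $X^{1/\ell}$.

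Next I would evaluate the integral. Substituting $u=t^{\ell}$ in
$\int_{0}^{X^{1/\ell}}(X-t^{\ell})^{1/k}\,dt$ and recognising the Beta integral $B(\frac{1}{k}+1,\frac{1}{\ell})=\frac{1}{k}\Gamma(\frac{1}{k})\Gamma(\frac{1}{\ell})/\Gamma(\frac{1}{k}+\frac{1}{\ell}+1)$ gives
\[
\int_{0}^{X^{1/\ell}}(X-t^{\ell})^{\frac{1}{k}}\,dt
=\GGG{1}\,X^{\frac{1}{k}+\frac{1}{\ell}}.
\]
For the other piece, I would first extend the range of integration up to $(X+H)^{1/\ell}$, producing the same Beta expression with $X$ replaced by $X+H$, and then estimate the truncated tail: since $(X+H)^{1/\ell}-X^{1/\ell}\ll HX^{1/\ell-1}$ and the integrand on this range is $\le H^{1/k}$, the tail is $O(H^{1+\frac{1}{k}}X^{\frac{1}{\ell}-1})$.

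Subtracting, the integral equals
\[
\GGG{1}\bigl((X+H)^{\frac{1}{k}+\frac{1}{\ell}}-X^{\frac{1}{k}+\frac{1}{\ell}}\bigr)
+O\!\left(H^{1+\frac{1}{k}}X^{\frac{1}{\ell}-1}\right),
\]
and applying \cref{lem:main_term_diff} converts the bracketed difference into the claimed main term $\GG{1}HX^{\frac{1}{k}+\frac{1}{\ell}-1}$ with an extra error $O(H^{2}X^{\frac{1}{k}+\frac{1}{\ell}-2})$. Since $H\le X$ and $k\ge 1$, a quick comparison $(H/X)^{1-1/k}\le 1$ shows this last error is dominated by $H^{1+1/k}X^{1/\ell-1}$. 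Collecting the pieces yields the stated asymptotic. The only slightly delicate step is the choice of substitution needed to line up the two Beta integrals while keeping the discarded tail at the affordable size $H^{1+\frac{1}{k}}X^{\frac{1}{\ell}-1}$; everything else is routine calculus.
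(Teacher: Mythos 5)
Your proof is correct, and it reaches the stated error terms by a slightly different decomposition than the paper's. The paper first writes $S(X+H)-S(X)=\frac{1}{k}\int_X^{X+H}\sum_{n^\ell\le X}(u-n^\ell)^{\frac1k-1}\,du$ and, for each fixed $u$, compares the inner sum with $\int_0^{X^{1/\ell}}(u-w^\ell)^{\frac1k-1}\,dw$ using monotonicity of the integrand in $w$; this forces it to carry the integrable singularity $(u-X)^{\frac1k-1}$, which only becomes the harmless $O(H^{1/k})$ after the final integration in $u$. You instead keep the difference $g(t)=(X+H-t^\ell)^{1/k}-(X-t^\ell)^{1/k}$ intact, observe that it is non-decreasing with $\sup|g|\le H^{1/k}$, and compare the sum with the integral in a single step; this avoids the singular integrand entirely and replaces the Beta integral $B(\frac1k,\frac1\ell)$ by $B(\frac1k+1,\frac1\ell)$, so your main term comes out with the $\Gamma(\frac1k+\frac1\ell+1)$ normalization that feeds directly into \cref{lem:main_term_diff}. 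The accounting of the two error terms --- $O(H^{1/k})$ from the sum--integral comparison and the endpoints, and $O(H^{1+\frac1k}X^{\frac1\ell-1})$ from extending the range to $(X+H)^{1/\ell}$ and from absorbing $H^{2}X^{\frac1k+\frac1\ell-2}$ via $H\le X$ --- is identical in the two arguments, and both buy the same final statement. Your version is arguably the cleaner of the two; the only cosmetic caveat is that the step you label ``Abel summation'' is really the elementary comparison of a monotone function with its integral, but the bound you quote for it is correct.
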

\begin{proof}
The left-hand side of the assertion is
\begin{equation}
\label{main_term:first}
\begin{aligned}
=
\frac{1}{k}\sum_{n^{\ell}\le X}\int_{X}^{X+H}(u-n^{\ell})^{\frac{1}{k}-1}du
=
\int_{X}^{X+H}\frac{1}{k}\sum_{n^{\ell}\le X}(u-n^{\ell})^{\frac{1}{k}-1}du.
\end{aligned}
\end{equation}
Since the function $(u-w^{\ell})^{\frac{1}{k}-1}$ is non-decreasing
over $0\le w\le X^{\frac{1}{\ell}}$,
\begin{equation}
\label{main_term:integrand}
\begin{aligned}
\frac{1}{k}\sum_{n^{\ell}\le X}(u-n^{\ell})^{\frac{1}{k}-1}
&=
\frac{1}{k}\int_{0}^{X^{\frac{1}{\ell}}}(u-w^{\ell})^{\frac{1}{k}-1}dw
+
O\left(\frac{1}{k}(u-X)^{\frac{1}{k}-1}\right)\\
&=
\frac{1}{k\ell}\int_{0}^{X}(u-w)^{\frac{1}{k}-1}w^{\frac{1}{\ell}-1}dw
+
O\left(\frac{1}{k}(u-X)^{\frac{1}{k}-1}\right)
\end{aligned}
\end{equation}
for $X<u\le X+H$. Note that the second term on the right-hand side may tend to $\infty$
as $u\to X+0$, but this term is integrable over $(X,X+H]$.
We next extend the integral on the right-hand side.
For $X<u\le X+H$, by changing the variable,
\begin{equation}
\begin{aligned}
\frac{1}{k\ell}\int_{X}^{u}(u-w)^{\frac{1}{k}-1}w^{\frac{1}{\ell}-1}dw
&\ll
\frac{1}{k\ell}X^{\frac{1}{\ell}-1}\int_{0}^{u-X}w^{\frac{1}{k}-1}dw\\
&\ll
\frac{1}{\ell}X^{\frac{1}{\ell}-1}(u-X)^{\frac{1}{k}}
\ll
\frac{1}{\ell}H^{\frac{1}{k}}X^{\frac{1}{\ell}-1}.
\end{aligned}
\end{equation}
Hence, we can extend the integral in \cref{main_term:integrand} as
\begin{equation}
\begin{aligned}
&\frac{1}{k}\sum_{n^{\ell}\le X}(u-n^{\ell})^{\frac{1}{k}-1}\\
&=
\frac{1}{k\ell}\int_{0}^{u}(u-w)^{\frac{1}{k}-1}w^{\frac{1}{\ell}-1}dw
+
O\left(\frac{1}{k}(u-X)^{\frac{1}{k}-1}+H^{\frac{1}{k}}X^{\frac{1}{\ell}-1}\right).
\end{aligned}
\end{equation}
The last integral on the right-hand side is
\begin{align}
\frac{1}{k\ell}\int_{0}^{u}(u-w)^{\frac{1}{k}-1}w^{\frac{1}{\ell}-1}dw
&=
\frac{1}{k\ell}u^{\frac{1}{k}+\frac{1}{\ell}-1}
\int_{0}^{1}(1-w)^{\frac{1}{k}-1}w^{\frac{1}{\ell}-1}dw\\
&=
\GG{1}u^{\frac{1}{k}+\frac{1}{\ell}-1}.
\end{align}
Therefore,
\[
\frac{1}{k}\sum_{n^{\ell}\le X}(u-n^{\ell})^{\frac{1}{k}-1}
=
\GG{1} u^{\frac{1}{k}+\frac{1}{\ell}-1}
+
O\left(\frac{1}{k}(u-X)^{\frac{1}{k}-1}+H^{\frac{1}{k}}X^{\frac{1}{\ell}-1}\right).
\]
On inserting this formula into \cref{main_term:first},
the left-hand side of the assertion is
\begin{equation}
\label{main_term:pre}
=
\GG{1}\int_{X}^{X+H}u^{\frac{1}{k}+\frac{1}{\ell}-1}du
+O\left(H^{\frac{1}{k}}+H^{1+\frac{1}{k}}X^{\frac{1}{\ell}-1}\right).
\end{equation}
By \cref{lem:main_term_diff}, this is
\[
=
\GG{1}HX^{\frac{1}{k}+\frac{1}{\ell}-1}
+O\left(H^{2}X^{\frac{1}{k}+\frac{1}{\ell}-2}
+H^{1+\frac{1}{k}}X^{\frac{1}{\ell}-1}+H^{\frac{1}{k}}\right).
\]
Since $H\le X$, we can estimate the first error term as
\[
H^{2}X^{\frac{1}{k}+\frac{1}{\ell}-2}
=
H^{2}X^{-(1-\frac{1}{k})}X^{\frac{1}{\ell}-1}
\le
H^{1+\frac{1}{k}}X^{\frac{1}{\ell}-1}.
\]
This completes the proof.
\end{proof}

\begin{lemma}
\label{lem:beta_short}
For positive integers $k,\ell$ and real numbers $X,H$ with $4\le H\le X$,
\[
\sum_{X<m^k+n^{\ell}\le X+H}1
\ll
HX^{\frac{1}{k}+\frac{1}{\ell}-1}
+
H^{\frac{1}{k}}
+
X^{\frac{1}{\ell}},
\]
where the implicit constant is absolute.
\end{lemma}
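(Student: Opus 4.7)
The plan is to count the pairs $(m,n)$ by fixing $n$ and estimating the number of admissible $m$, then summing over $n$. For each positive integer $n$ with $n^{\ell}\le X+H$, the number of $m\ge1$ satisfying $X-n^{\ell}<m^{k}\le X+H-n^{\ell}$ is at most $(X+H-n^{\ell})^{1/k}-\max(X-n^{\ell},0)^{1/k}+1$. Summing the $+1$ contributions yields $O(X^{1/\ell})$ via a trivial bound on the number of $n$ with $n^{\ell}\le X+H\le 2X$.

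I would then split the remaining sum over $n$ at $n^{\ell}=X$. For $n^{\ell}\le X$, the contribution is exactly
\[
\sum_{n^{\ell}\le X}\bigl[(X+H-n^{\ell})^{\frac{1}{k}}-(X-n^{\ell})^{\frac{1}{k}}\bigr]
=S(X+H)-S(X),
\]
where $S$ is as in \cref{def:S}. Applying \cref{lem:main_term} and using $H\le X$ to absorb $H^{1+1/k}X^{1/\ell-1}\le HX^{1/k+1/\ell-1}$ gives a contribution of order $HX^{1/k+1/\ell-1}+H^{1/k}$, the $\Gamma$-factor being absolutely bounded as already observed in the proof of \cref{lem:main_term_diff}.

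For $n^{\ell}>X$ (so the lower constraint $X-n^{\ell}<m^{k}$ is automatic), each term is bounded by $(X+H-n^{\ell})^{1/k}\le H^{1/k}$, and the number of such $n$ is $\ll HX^{1/\ell-1}+1$ by \cref{lem:power_short}. This contributes $\ll H^{1+1/k}X^{1/\ell-1}+H^{1/k}$, which again is absorbed into $HX^{1/k+1/\ell-1}+H^{1/k}$ using $H\le X$. Adding the three pieces yields the stated bound.

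There is no real obstacle here; the only mildly delicate point is that when $n^{\ell}$ is close to $X$ the integrand $(u-n^{\ell})^{1/k-1}$ appearing implicitly in \cref{lem:main_term} is integrably singular, but this is already accounted for in that lemma, and the transition region $X<n^{\ell}\le X+H$ is handled separately by the trivial $H^{1/k}$ bound. Thus the proof reduces to bookkeeping of the three elementary estimates above.
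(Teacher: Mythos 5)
Your proof is correct and follows essentially the same route as the paper: truncate the sum over $n$ at $n^{\ell}\le X$, bound the transition range $X<n^{\ell}\le X+H$ via \cref{lem:power_short} and the trivial $H^{\frac{1}{k}}$ bound, reduce the main range to $S(X+H)-S(X)$ with an $O(X^{\frac{1}{\ell}})$ rounding error, and apply \cref{lem:main_term} together with $H\le X$. No issues.
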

\begin{proof}
We rewrite the left-hand side as
\[
\sum_{X<m^k+n^{\ell}\le X+H}1
=
\sum_{n^{\ell}\le X+H}\sum_{X-n^{\ell}<m^k\le X+H-n^{\ell}}1.
\]
We next truncate the outer summation over $n^{\ell}$.
By using \cref{lem:power_short},
\[
\sum_{X<n^{\ell}\le X+H}\sum_{X-n^{\ell}<m^k\le X+H-n^{\ell}}1
\ll
\sum_{X<n^{\ell}\le X+H}\sum_{m^k\le H}1
\ll
H^{1+\frac{1}{k}}X^{\frac{1}{\ell}-1}+H^{\frac{1}{k}}.
\]
Thus, by using the assumption $H\le X$,
\begin{equation}
\label{beta_short:pre}
\sum_{X<m^k+n^{\ell}\le X+H}1
=
\sum_{n^{\ell}\le X}\sum_{X-n^{\ell}<m^k\le X+H-n^{\ell}}1
+
O\left(HX^{\frac{1}{k}+\frac{1}{\ell}-1}
+
H^{\frac{1}{k}}\right).
\end{equation}
The sum on the right-hand side is
\[
\sum_{n^{\ell}\le X}\sum_{X-n^{\ell}<m^k\le X+H-n^{\ell}}1
=
S(X+H)-S(X)
+
O(X^{\frac{1}{\ell}}).
\]
By using \cref{lem:main_term} and the assumption $H\le X$,
\[
\sum_{n^{\ell}\le X}\sum_{X-n^{\ell}<m^k\le X+H-n^{\ell}}1
\ll
HX^{\frac{1}{k}+\frac{1}{\ell}-1}+H^{\frac{1}{k}}+X^{\frac{1}{\ell}}.
\]
On inserting this estimate into \cref{beta_short:pre},
we obtain the lemma.
\end{proof}

We next recall some standard lemmas on prime numbers
and non-trivial zeros of the Riemann zeta functions.

\begin{lemma}
\label{lem:vonMangoldt}
For real numbers $X,T,x$ with $2\le T\le 2X$ and $0\le x\le X$, we have
\[
\psi(x)
=
x-\sum_{\substack{\rho\\|\gamma|\le T}}\frac{x^{\rho}}{\rho}+O(XT^{-1}L^2),
\]
where the implicit constant is absolute.
\end{lemma}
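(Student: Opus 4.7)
The plan is to derive this as a standard truncated von Mangoldt explicit formula, handling the two regimes $x \ge 2$ and $0 \le x < 2$ separately and then verifying that the stated uniform error $O(XT^{-1}L^2)$ absorbs both.

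For the main range $2 \le x \le X$, I would start with the truncated Perron identity
\[
\psi(x) = \frac{1}{2\pi i}\int_{c-iT}^{c+iT}\left(-\frac{\zeta'(s)}{\zeta(s)}\right)\frac{x^s}{s}\,ds + O\!\left(\frac{x\log^2(xT)}{T}\right),
\]
with $c = 1 + 1/L$, where the Perron error follows from the standard argument after splitting off the terms with $|n-x|$ small. Then I would shift the contour to $\Re s = -1$ through a rectangle with horizontal sides at heights $\pm T$ (adjusting $T$ by $O(1)$ so that the sides stay a distance $\gg 1/L$ from any zero, using the fact that the number of zeros in any unit interval is $O(L)$). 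Crossing this rectangle picks up the pole at $s=1$ (giving $x$), the non-trivial zeros $\rho$ with $|\gamma|\le T$ (giving $-\sum x^\rho/\rho$), the pole at $s=0$ (giving $-\zeta'(0)/\zeta(0) = \log 2\pi$), and the trivial zero at $s=-2$ (giving $\tfrac{1}{2}\log(1-x^{-2})$), both of which are $O(1)$ for $x\ge 2$. The horizontal segments and the left vertical segment are estimated using standard bounds $|\zeta'/\zeta(\sigma+it)| \ll \log^2(|t|+2)$ off the zeros, yielding a total contribution $O(xL^2/T)$. Since $x \le X$ and $\log(xT) \ll L$ under the hypothesis $T \le 2X$, every occurrence of $x\log^2(xT)/T$ is majorized by $XL^2/T$, and the bounded trivial-zero contributions are also $\ll L^2 \ll XL^2/T$ because $T \le 2X$ forces $X/T \ge 1/2$.

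For $0 \le x < 2$ the claim is trivial: $\psi(x)=0$, the main term $x$ is $O(1)$, and the zero sum satisfies
\[
\left|\sum_{|\gamma|\le T}\frac{x^\rho}{\rho}\right|
\le \sum_{|\gamma|\le T}\frac{1}{|\rho|} \ll L^2,
\]
by the classical estimate for $\sum_{|\gamma|\le T}|\rho|^{-1}$. Since $T\le 2X$ gives $XT^{-1}L^2 \ge \tfrac{1}{2}L^2$, the entire expression on the right-hand side is $O(XT^{-1}L^2)$, matching the claim.

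The only genuinely delicate point is the contour-shift step: we need horizontal sides that avoid the non-trivial zeros. I would handle this by the usual device of selecting $T' \in [T, T+1]$ with $\min_\rho|T'-\gamma| \gg 1/L$ (possible because there are $\ll L$ zeros in that range), performing the contour argument with $T'$ in place of $T$, and then noting that the extra zeros with $T < |\gamma|\le T'$ contribute $\ll L \cdot \max_\rho |x^\rho/\rho| \ll xL/T \ll XL^2/T$, which is within the stated error. Everything else — the Perron truncation bound and the estimation of $\zeta'/\zeta$ on the contour — is entirely standard and can be cited from any textbook treatment.
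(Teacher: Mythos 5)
Your proof is correct and follows essentially the same route as the paper: the paper simply cites Montgomery--Vaughan, Theorem 12.5, for the range $2\le x\le X$ and then disposes of $0\le x\le 2$ exactly as you do, using $\sum_{|\gamma|\le T}|\rho|^{-1}\ll L^2\ll XT^{-1}L^2$. The only difference is that you reprove the cited explicit formula from Perron's formula and a contour shift rather than quoting it, which is fine (just note that the honest Perron error also contains the near-prime-power term $\ll\log x$, which is likewise absorbed since $T\le 2X$).
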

\begin{proof}
In the case $2\le x\le X$,
this follows from Theorem~12.5 of \cite{MV}.
In the case $0\le x\le 2$, the lemma trivially follows
since $XT^{-1}L^2\gg L^2$ by $T\le 2X$, and
\[
\sum_{\substack{\rho\\|\gamma|\le T}}\frac{x^{\rho}}{\rho}
\ll
\sum_{\substack{\rho\\|\gamma|\le T}}\frac{1}{|\rho|}
\ll
(\log T)^2
\ll
L^2
\]
for the case $0\le x\le2$. This completes the proof.
\end{proof}

\begin{lemma}[The Korobov--Vinogradov zero-free region]
\label{lem:KV}
We have $\zeta(s)\neq0$ for
\[
\sigma>1-c_0(\log\tau)^{-\frac{2}{3}}(\log\log\tau)^{-\frac{1}{3}},\quad
s=\sigma+it,\quad
\tau=|t|+4,
\]
where $c_0>0$ is some absolute constant.
\end{lemma}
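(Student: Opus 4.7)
The plan is to proceed along the classical Korobov--Vinogradov route, which splits naturally into two components: first, establish a quantitative growth bound for $|\zeta(\sigma + it)|$ in a narrow strip to the left of $\sigma = 1$; second, feed this bound into the standard Hadamard--de la Vall\'ee-Poussin zero-free region argument based on the nonnegativity of $3 + 4\cos\theta + \cos 2\theta$.

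For the first step, the key input is Vinogradov's mean value theorem, which yields nearly sharp bounds on high moments of exponential sums of the form $\sum_{n \le N} e(t \log n / (2\pi))$. A van der Corput / Weyl-type iteration applied to the approximating Dirichlet polynomial for $\zeta(\sigma+it)$, combined with partial summation and the functional equation to handle the tail, produces a bound of shape
\[
|\zeta(\sigma + it)| \ll \tau^{A(1-\sigma)^{3/2}}(\log \tau)^{2/3}
\]
uniformly for $1/2 \le \sigma \le 1$ and $\tau = |t| + 4$, where $A>0$ is an absolute constant. This is where essentially all of the genuine technical work resides.

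For the second step, let $\rho = \beta + i\gamma$ be a hypothetical zero with $|\gamma|$ large. Combining the classical three-term inequality
\[
0 \le -3\,\Re \frac{\zeta'}{\zeta}(\sigma) - 4\,\Re \frac{\zeta'}{\zeta}(\sigma + i\gamma) - \Re \frac{\zeta'}{\zeta}(\sigma + 2i\gamma)
\]
with the pole contribution $-\Re(\zeta'/\zeta)(\sigma) \le 1/(\sigma - 1) + O(1)$, the bound $-\Re(\zeta'/\zeta)(\sigma + i\gamma) \le -1/(\sigma - \beta) + O(\log|\zeta|)$ coming from the Hadamard product (where the putative zero $\rho$ is retained and all other zeros are dropped by positivity), and the growth estimate from the previous step to control $\log|\zeta|$, one derives an inequality of the form
\[
\frac{4}{\sigma - \beta} \le \frac{3}{\sigma - 1} + C\,(1-\sigma)^{3/2} \log\tau + C \log\log\tau.
\]
Choosing the free parameter at the optimal scale $\sigma - 1 \asymp (\log\tau)^{-2/3}(\log\log\tau)^{-1/3}$ forces $1 - \beta$ to satisfy the advertised lower bound.

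The main obstacle is unquestionably the proof of Vinogradov's mean value theorem together with its iterative transfer to pointwise bounds for $\zeta(\sigma+it)$, which occupies considerable space in the literature. Since the statement of the lemma is entirely classical, the cleanest course in the paper itself is to cite a proof from a standard reference such as Titchmarsh, Ivi\'c, or Karatsuba--Voronin rather than reproduce the full argument.
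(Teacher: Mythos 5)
Your outline is the standard Korobov--Vinogradov argument underlying this classical result, and your closing recommendation to simply cite a standard reference is exactly what the paper does: it invokes Theorem~6.1 of Ivi\'c, adding only the minor remark that shrinking $c_0$ removes the condition $t\ge t_0$ there. So the proposal is correct and takes essentially the same approach as the paper.
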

\begin{proof}
See \cite[Theorem 6.1, p.~143]{Ivic}.
Note that by taking $c_0>0$ sufficiently small,
we can remove the condition $t\ge t_0$ in Theorem~6.1 of \cite{Ivic}.
\end{proof}

\begin{lemma}[The Huxley--Ingham zero density estimate
{\cite{Huxley,Ingham}}]
\label{lem:HI}
For real numbers $\alpha$ and $T$ with $\frac{1}{2}\le\alpha\le1$ and $T\ge2$,
\[
N(\alpha,T)\ll T^{c(\alpha)}(\log T)^A,\quad
c(\alpha)=
\left\{
\begin{array}{ll}
\frac{3(1-\alpha)}{3\alpha-1}&(\text{if $\frac{3}{4}\le\alpha\le1$}),\\[3mm]
\frac{3(1-\alpha)}{2-\alpha}&(\text{if $\frac{1}{2}\le\alpha\le\frac{3}{4}$}),
\end{array}
\right.
\]
where the constant $A$ and the implicit constant are absolute.
\end{lemma}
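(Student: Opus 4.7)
The plan is to follow the classical mollifier plus large-values strategy. I would introduce a Dirichlet polynomial mollifier $M_{Y}(s)=\sum_{n\le Y}\mu(n)n^{-s}$ of a length $Y$ to be optimized, which approximates $\zeta(s)^{-1}$. Using the approximate functional equation for $\zeta$, the product $\zeta(s)M_{Y}(s)$ can be rewritten, on and to the right of the line $\Re s=\tfrac{1}{2}$, as a sum $A(s)+B(s)+\text{(negligible)}$ of two Dirichlet polynomials of controlled length. At a zero $\rho=\beta+i\gamma$ of $\zeta$ with $\beta\ge\alpha$ and $|\gamma|\le T$ this forces $A(\rho)+B(\rho)=O(T^{-C})$, so at least one of $|A(\rho)|,|B(\rho)|$ is $\gg 1$. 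By a convexity-type vertical shift, one deduces that some Dirichlet polynomial $D(s)=\sum_{n\sim N}a_{n}n^{-s}$ (of length $N$ comparable to a power of $Y$ and $T$) satisfies $|D(\alpha+i\gamma)|\ge V$ for an appropriate threshold $V=V(\alpha,T,N,Y)$.

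Having set up this detection, I would apply a large-values estimate at the points $\alpha+i\gamma$. First, Landau's well-spacing trick extracts a subset of the zeros with vertical spacing $\gg 1$, at the cost of a factor $\log T$. For $\tfrac{1}{2}\le\alpha\le\tfrac{3}{4}$ I would use Ingham's bound, which rests on the fourth moment
\[
\int_{0}^{T}|\zeta(\tfrac{1}{2}+it)|^{4}\,dt\ll T(\log T)^{4}
\]
and hence on the mean value estimate $\int_{0}^{T}|D(\tfrac{1}{2}+it)|^{2}\,dt\ll(N+T)(\log T)^{A}$. Combining this with a dyadic decomposition of the range of $|D|$ produces a large-values count of the shape $R\ll(NV^{-2}+TN^{2-4\alpha}V^{-4})(\log T)^{A}$; choosing $Y$ so that the two terms balance and reading off the exponent for $R$ leads to $c(\alpha)=3(1-\alpha)/(2-\alpha)$.

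For the range $\tfrac{3}{4}\le\alpha\le 1$ the fourth-moment input becomes wasteful, and I would use Huxley's refinement based on the Halász--Montgomery large-values inequality. This produces a large-values count of the form $R\ll(NV^{-2}+TN^{3-6\alpha}V^{-6})(\log T)^{A}$, and re-balancing $V$ and $Y$ yields the exponent $c(\alpha)=3(1-\alpha)/(3\alpha-1)$. Taking the pointwise maximum of the two bounds over the whole range $\tfrac{1}{2}\le\alpha\le 1$ yields the stated $c(\alpha)$.

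The main obstacle is the choice of the mollifier length $Y$ together with the book-keeping needed to keep the exponents uniform across the two ranges. Taking $Y$ too short leaves uncancelled zeros near the line $\Re s=1$; taking $Y$ too long makes the coefficients of $D$ too wild for the mean-value bound. The choice that balances the two terms in each large-values inequality yields the stated exponents, but verifying that the polylogarithmic factors coming from well-spacing, dyadic summation, the approximate functional equation, and the shift from $\Re s=\tfrac{1}{2}$ to $\Re s=\alpha$ collapse into a single $(\log T)^{A}$ is where the argument becomes technically delicate, and is the reason the result is simply quoted from Huxley and Ingham rather than reproved here.
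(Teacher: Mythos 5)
The paper offers no proof of this lemma beyond the citation of Ivi\'c, Theorem~11.1, which is exactly the combination of Ingham's estimate ($c(\alpha)=3(1-\alpha)/(2-\alpha)$, resting on the fourth moment of $\zeta$ on the critical line) with Huxley's refinement ($c(\alpha)=3(1-\alpha)/(3\alpha-1)$, resting on the Hal\'asz--Montgomery large-values inequality); your outline is a sketch of precisely that standard proof, so you are following the same route as the quoted source. One step, however, is a non sequitur as written: from $A(\rho)+B(\rho)=O(T^{-C})$ alone you cannot conclude that one of $|A(\rho)|,|B(\rho)|$ is $\gg1$ --- both could be small. The classical zero-detection works because $\zeta(s)M_{Y}(s)=1+(\text{tail})$, so the vanishing of $\zeta$ at $\rho$ (fed through a Mellin/Perron smoothing or the approximate functional equation) forces the \emph{tail} to have modulus $\ge\tfrac12$, and hence at least one of the two pieces into which one splits it; the indispensable constant term $1$ is missing from your formulation. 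Two smaller points: the final selection is the pointwise \emph{minimum} of the two admissible exponents (Ingham for $\tfrac12\le\alpha\le\tfrac34$, Huxley for $\tfrac34\le\alpha\le1$, the two agreeing at $\alpha=\tfrac34$ with value $\tfrac35$), not the maximum; and the intermediate large-values exponents you display are only heuristically normalized, though that is acceptable in a sketch. With the detection step repaired, the rest (Landau well-spacing, the mean-value theorem for Dirichlet polynomials plus the fourth moment in the lower range, Hal\'asz--Montgomery in the upper range, and the balancing of the mollifier length against the threshold $V$) is the standard argument contained in the cited reference.
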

\begin{proof}
See \cite[Theorem 11.1, p.~273]{Ivic}.
\end{proof}

\begin{lemma}
\label{lem:Huxley_PNT}
For real numbers $X,H,\epsilon$ with $4\le H\le X$ and $\epsilon>0$,
\[
\psi(X+H)-\psi(X)=H+O(HB^{-1})
\]
provided
\[
X^{\frac{7}{12}+\epsilon}\le H\le X,
\]
where the implicit constant depends on $\epsilon$.
\end{lemma}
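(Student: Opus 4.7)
The plan is to follow the standard argument for the Huxley prime number theorem in short intervals, combining the von Mangoldt explicit formula (\cref{lem:vonMangoldt}) with the Huxley--Ingham zero density estimate (\cref{lem:HI}) and the Korobov--Vinogradov zero-free region (\cref{lem:KV}). First I apply \cref{lem:vonMangoldt} at both $X$ and $X+H$ with a parameter $T$ to be chosen, and subtract the two identities to obtain
\[
\psi(X+H)-\psi(X)
=
H-\sum_{\substack{\rho\\|\gamma|\le T}}\frac{(X+H)^\rho-X^\rho}{\rho}+O(XT^{-1}L^2).
\]
A choice of $T$ of size about $X$ (up to logarithmic factors) makes the truncation error $O(XT^{-1}L^2)$ acceptable for $H\ge X^{7/12+\epsilon}$.

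For each non-trivial zero $\rho=\beta+i\gamma$, I use the two elementary estimates
\[
\left|\frac{(X+H)^\rho-X^\rho}{\rho}\right|
\ll
\min\left(HX^{\beta-1},\ \frac{X^\beta}{|\gamma|+1}\right),
\]
the first coming from writing the numerator as $\int_X^{X+H}u^{\rho-1}\,du$, the second from the trivial bound on the numerator together with $|\rho|\gg|\gamma|+1$. Splitting the sum over zeros at $|\gamma|=X/H$ and applying partial summation against the bound $N(\alpha,T)\ll T^{c(\alpha)}L^A$ of \cref{lem:HI}, the problem reduces, up to factors of $L$, to controlling
\[
\sup_{\frac{1}{2}\le\alpha\le 1-\eta(T)} X^{\alpha-1}\,T^{c(\alpha)},
\]
where $\eta(T)$ denotes the zero-free region width from \cref{lem:KV}.

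The main analytical step, and the main obstacle, is the optimization of this supremum. Writing $H=X^\theta$ and taking $T$ of the order $X/H$, the condition $X^{\alpha-1}T^{c(\alpha)}\ll 1$ is equivalent to $c(\alpha)(1-\theta)\le 1-\alpha$. Plugging in the two pieces of $c(\alpha)$ from \cref{lem:HI}, for $\frac{1}{2}\le\alpha\le\frac{3}{4}$ this reduces to $\alpha\le 3\theta-1$, and for $\frac{3}{4}\le\alpha\le 1$ it reduces to $\alpha\ge\frac{4}{3}-\theta$. Both constraints are tightest at the crossover $\alpha=\frac{3}{4}$, and both hold simultaneously precisely when $\theta\ge\frac{7}{12}$. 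Thus for $\theta\ge\frac{7}{12}+\epsilon$ there is a fixed positive power saving $X^{-\delta}$ throughout the range $\frac{1}{2}\le\alpha\le 1-\eta(T)$, which absorbs the losses of $L^A$ in the density estimate comfortably.

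Finally, the contribution from $\alpha$ close to $1$, where the density estimate alone yields no power gain, is absorbed by the Korobov--Vinogradov zero-free region: in the range $\alpha\ge 1-\eta(T)$ we have $X^{\alpha-1}\le\exp(-\eta(T)L)$, which is bounded by $B^{-c'}$ for some $c'>0$ provided the constant $c$ in \cref{def:B} is chosen small enough relative to $c_0$. Combining the two regimes produces the asserted $O(HB^{-1})$ saving after adjusting $c$. Once the threshold $\theta\ge\frac{7}{12}$ has been extracted from the optimization above, the remaining work reduces to routine bookkeeping for the choice of $T$ and of constants.
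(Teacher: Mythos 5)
Your proposal is correct and is precisely the ``standard argument'' that the paper invokes without giving details: the explicit formula of \cref{lem:vonMangoldt}, the bound $\min\left(HX^{\beta-1},\,X^{\beta}/|\gamma|\right)$ for each zero term, partial summation against \cref{lem:HI}, and \cref{lem:KV} near $\alpha=1$, with the threshold $\frac{7}{12}$ extracted from the optimization exactly as you describe. One small imprecision: the density estimate does not give a fixed power saving $X^{-\delta}$ uniformly on $\frac{1}{2}\le\alpha\le1-\eta$ (the exponent $(\alpha-1)+(1-\theta)c(\alpha)$ degenerates like a constant times $\epsilon(\alpha-1)$ as $\alpha\to1$, so at $\alpha=1-\eta$ the saving is only about $\exp(-C\epsilon\eta L)$), and there are no zeros at all with $\alpha\ge1-\eta$ to ``absorb''; but since $\exp(-C\epsilon\eta L)$ is of the shape $B^{-c'}$, this is exactly the saving the lemma claims, and the argument closes after shrinking the constant $c$ in \cref{def:B}.
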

\begin{proof}
This follows by \cref{lem:KV} and \cref{lem:HI} through the standard argument.
\end{proof}

In the proof of \cref{thm:main_kl},
we need to estimate several sums over non-trivial zeros
of the Riemann zeta function. Our next several lemmas
deal with such sums and the exponents in the resulting estimates.

\begin{lemma}
\label{lem:zero_sum}
For real numbers $K,X,Y$ with $1\le K\le Y\le X^2$ and $X\ge4$,
\[
\sum_{\substack{\rho\\K<|\gamma|\le 2K}}Y^{\beta}
\ll
\left(Y^{\phi(\lambda)}+Y^{1-\eta+2\eta\lambda}\right)L^A,
\]
where the function $\phi(\lambda)$ is defined by
\begin{equation}
\phi(\lambda)
=
\left\{
\begin{array}{>{\displaystyle}ll}
\frac{3}{5}\lambda+\frac{3}{4}
&(\text{if $0\le\lambda\le\frac{25}{48}$}),\\[3mm]
3\lambda+2(1-\sqrt{3\lambda})&(\text{if $\frac{25}{48}\le\lambda\le\frac{3}{4}$}),\\[3mm]
\lambda+\frac{1}{2}&(\text{if $\frac{3}{4}\le\lambda\le1$}).
\end{array}
\right.
\end{equation}
as in \cref{def:phi},
\[
\eta=c_1(\log X)^{-\frac{2}{3}}(\log\log X)^{-\frac{1}{3}},\quad
\lambda=\frac{\log K}{\log Y},
\]
and constants $A,c_1>0$ and the implicit constant are absolute.
\end{lemma}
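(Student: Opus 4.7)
The plan is to combine the Huxley--Ingham zero density estimate (Lemma \ref{lem:HI}) with the Korobov--Vinogradov zero-free region (Lemma \ref{lem:KV}) via a Stieltjes-type argument applied to $N_K(\alpha):=\#\{\rho:K<|\gamma|\le 2K,\ \beta\ge\alpha\}$. Since $|\gamma|\le 2K\le 2X^{2}$, Lemma \ref{lem:KV} (with $c_{1}$ chosen sufficiently small in terms of $c_{0}$) guarantees every contributing zero has $\beta\le 1-\eta$. Zeros with $\beta\le\tfrac12$ contribute at most $Y^{1/2}\cdot O(KL)\ll Y^{1/2+\lambda}L$, which is absorbed into $Y^{\phi(\lambda)}L^{A}$ once one checks the elementary inequality $\phi(\lambda)\ge\lambda+\tfrac12$ on $[0,1]$.

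For the remaining zeros, the identity $Y^{\beta}=Y^{1/2}+(\log Y)\int_{1/2}^{\beta}Y^{\alpha}\,d\alpha$ combined with Fubini gives
\begin{equation*}
\sum_{\substack{\rho\\K<|\gamma|\le 2K\\ \beta>1/2}}Y^{\beta}
= Y^{1/2}N_{K}(\tfrac12)+(\log Y)\int_{1/2}^{1-\eta}Y^{\alpha}N_{K}(\alpha)\,d\alpha,
\end{equation*}
and Lemma \ref{lem:HI} yields $N_{K}(\alpha)\le N(\alpha,2K)\ll Y^{\lambda c(\alpha)}L^{A}$. Writing $g(\alpha):=\alpha+\lambda c(\alpha)$, the whole sum is thus $\ll L^{A+1}\max_{\alpha\in[1/2,1-\eta]}Y^{g(\alpha)}$, and it suffices to prove $\max_{[1/2,1-\eta]}g(\alpha)\le\max\bigl(\phi(\lambda),\,1-\eta+2\eta\lambda\bigr)$.

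The bulk of the argument is the piecewise optimization of $g$. On $[\tfrac12,\tfrac34]$ we have $c(\alpha)=3(1-\alpha)/(2-\alpha)$ and $g''<0$, so $g$ is concave with unconstrained critical point $\alpha^{*}=2-\sqrt{3\lambda}$; this lies in $[\tfrac12,\tfrac34]$ exactly when $\lambda\in[\tfrac{25}{48},\tfrac34]$, and there $g(\alpha^{*})=3\lambda+2(1-\sqrt{3\lambda})$, the middle branch of $\phi$. Outside that range the maximum on $[\tfrac12,\tfrac34]$ is at an endpoint, giving $g(\tfrac34)=\tfrac34+\tfrac{3}{5}\lambda$ for $\lambda\le\tfrac{25}{48}$ (first branch) or $g(\tfrac12)=\tfrac12+\lambda$ for $\lambda\ge\tfrac34$ (third branch). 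On $[\tfrac34,1-\eta]$ we have $c(\alpha)=3(1-\alpha)/(3\alpha-1)$ and $g''>0$, so $g$ is convex and attains its maximum at an endpoint: the endpoint $\alpha=\tfrac34$ is already accounted for, and at $\alpha=1-\eta$ the bound $c(1-\eta)=3\eta/(2-3\eta)\le 2\eta$ (valid once $\eta\le\tfrac16$, which holds after taking $c_{1}$ small enough) yields $g(1-\eta)\le 1-\eta+2\eta\lambda$. The main obstacle is organizing these cases cleanly and verifying the pieces cohere at $\lambda=\tfrac{25}{48}$ and $\lambda=\tfrac34$, together with the transversal comparison $g(\tfrac34)\le\phi(\lambda)$ for $\lambda\ge\tfrac{25}{48}$; each of these reduces to a short, elementary computation.
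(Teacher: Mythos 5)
Your proposal is correct and follows essentially the same route as the paper: truncate at $\beta=\tfrac12$ and $\beta=1-\eta$ using the zero counting and the Korobov--Vinogradov region, convert the sum to an integral of $Y^{\alpha}$ against $N(\alpha,2K)$, insert the Huxley--Ingham bound, and maximize $\alpha\mapsto\alpha+\lambda c(\alpha)$ piecewise (concave critical point $2-\sqrt{3\lambda}$ on $[\tfrac12,\tfrac34]$, convexity and endpoint evaluation on $[\tfrac34,1-\eta]$), exactly as in the paper's treatment of $h(\alpha)$. The only differences are cosmetic (layer-cake identity versus Stieltjes integration by parts, and your explicit check that $\phi(\lambda)\ge\lambda+\tfrac12$, which the paper leaves implicit).
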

\begin{proof}
By \cref{lem:KV} and \cref{lem:HI}, the left-hand side is bounded by
\begin{equation}
\label{zero_sum:sum_parts}
\sum_{\substack{\rho\\K<|\gamma|\le2K\\\beta\ge\frac{1}{2}}}Y^{\beta}
=-\int_{\frac{1}{2}}^{1-\eta}Y^{\alpha}dN(\alpha,2K)
\ll
KY^{\frac{1}{2}}L
+L^A\int_{\frac{1}{2}}^{1-\eta}K^{c(\alpha)}Y^{\alpha}d\alpha
\end{equation}
for sufficiently small $c_1>0$.
We determine the maximum value of 
\[
K^{c(\alpha)}Y^{\alpha}
=
Y^{\lambda c(\alpha)+\alpha}
\]
over $\alpha\in[\frac{1}{2},1-\eta]$. Let $h(\alpha)=\lambda c(\alpha)+\alpha$.
For $\alpha\in[\frac{1}{2},\frac{3}{4}]$, we have
\[
h(\alpha)
=
\frac{3\lambda(1-\alpha)}{2-\alpha}+\alpha
=
3\lambda-\frac{3\lambda}{2-\alpha}+\alpha.
\]
By taking the derivative,
\[
h'(\alpha)=-\frac{3\lambda}{(2-\alpha)^2}+1.
\]
Thus, in the range $\alpha\in(-\infty,2)$,
\[
h'(\alpha)=0\quad\Longleftrightarrow\quad
\alpha=2-\sqrt{3\lambda},
\]
so $h(\alpha)$ is increasing for $\alpha<2-\sqrt{3\lambda}$
and decreasing for $2-\sqrt{3\lambda}<\alpha<2$.
Hence,
\[
\max_{\alpha\in[\frac{1}{2},\frac{3}{4}]}h(\alpha)
=
\left\{
\begin{array}{>{\displaystyle}ll}
\frac{3}{5}\lambda+\frac{3}{4}&(\text{if $0\le\lambda\le\frac{25}{48}$}),\\[3mm]
3\lambda+2(1-\sqrt{3\lambda})&(\text{if $\frac{25}{48}\le\lambda\le\frac{3}{4}$}),\\[3mm]
\lambda+\frac{1}{2}&(\text{if $\frac{3}{4}\le\lambda\le1$}).
\end{array}
\right.
\]
For $\alpha\in[\frac{3}{4},1-\eta]$, we have
\[
h(\alpha)
=
\frac{3\lambda(1-\alpha)}{3\alpha-1}+\alpha
=
-\lambda+\frac{2\lambda}{3\alpha-1}+\alpha.
\]
By taking the derivative twice, in the range $\alpha\in[\frac{3}{4},1-\eta]$,
\[
h''(\alpha)=\frac{18\lambda}{(3\alpha-1)^3}>0
\]
so that $h(\alpha)$ is convex downwards in this range.
Thus, for small $c_1$,
\[
\max_{\alpha\in[\frac{3}{4},1-\eta]}h(\alpha)
=
\max\left(h\left(\frac{3}{4}\right),h\left(1-\eta\right)\right)
\le
\max\left(h\left(\frac{3}{4}\right),1-\eta+2\eta\lambda\right).
\]
By using the above observations for $h(\alpha)$ in \cref{zero_sum:sum_parts},
we obtain the lemma.
\end{proof}

\begin{lemma}
\label{lem:phi_dev}
Let $\phi(\lambda)$ be the function given by \cref{def:phi},
Then,
\[
\frac{3}{5}\le\phi'(\lambda)\le1
\]
for $\lambda\ge0$.
In particular, $\phi(\lambda)$ is increasing.
\end{lemma}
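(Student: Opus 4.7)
The plan is to verify $3/5 \le \phi'(\lambda) \le 1$ piecewise on each of the three intervals in \cref{def:phi}, and to confirm that the one-sided derivatives agree at the junctions $\lambda = 25/48$ and $\lambda = 3/4$.

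On the outer pieces $[0, 25/48]$ and $[3/4, 1]$ the function $\phi$ is linear with slopes exactly $3/5$ and $1$ respectively, so both inequalities hold (with equality at the extremes) by inspection.

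The only calculation of substance is on the middle piece $[25/48, 3/4]$. Differentiating $3\lambda + 2(1 - \sqrt{3\lambda})$ gives $\phi'(\lambda) = 3 - \sqrt{3/\lambda}$, which is strictly increasing on $(0,\infty)$ since $\sqrt{3/\lambda}$ is decreasing. Evaluating at the endpoints yields $\phi'(25/48) = 3 - \sqrt{144/25} = 3 - 12/5 = 3/5$ and $\phi'(3/4) = 3 - \sqrt{4} = 1$. Monotonicity then pins $\phi'(\lambda)$ into $[3/5, 1]$ throughout the middle piece, and these boundary values simultaneously verify that $\phi'$ is continuous across the two junctions.

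There is no real obstacle here: the lemma is a mechanical bookkeeping check that the three-piece definition of $\phi$ was engineered to have matching slopes lying in the claimed range. The ``in particular'' clause then follows because $\phi'(\lambda) \ge 3/5 > 0$, so $\phi$ is strictly increasing wherever it is defined.
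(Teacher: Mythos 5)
Your proposal is correct and follows the same route as the paper: the paper's proof simply notes that only the middle piece needs checking, computes $\phi'(\lambda)=3-\sqrt{3/\lambda}$ there, and leaves the endpoint evaluations ($\phi'(25/48)=3/5$, $\phi'(3/4)=1$) and the monotonicity of $\phi'$ implicit, all of which you have verified explicitly. No issues.
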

\begin{proof}
It suffices to consider the case $\frac{25}{48}\le\lambda\le\frac{3}{4}$.
In this range,
\[
\phi'(\lambda)=3-\sqrt{\frac{3}{\lambda}}.
\]
Thus, the lemma easily follows.
\end{proof}

\begin{lemma}
\label{lem:phi_solve}
Let $\phi(\lambda)$ be the function given by \cref{def:phi}.
For real numbers $k,\ell$ with $k\ge1$ and $\ell\ge2$,
consider the solutions $\lambda_1$ and $\lambda_2$ of the equations
\begin{equation}
\label{phi_solve:eq}
\phi(\lambda_1)-\frac{1}{\ell}\lambda_1=1,\quad
\phi(\lambda_2)+\frac{1}{2}\lambda_2=1+\frac{k}{\ell}.
\end{equation}
Then, these functions $\lambda_1,\lambda_2$
are consistent with the functions given in \cref{def:lambda}.
\end{lemma}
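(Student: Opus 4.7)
The plan is to substitute each of the three pieces of $\phi$ from \cref{def:phi} into the equations \cref{phi_solve:eq}, solve the resulting linear or quadratic equation explicitly in each case, and verify that the three resulting formulas coincide with the three pieces of $\lambda_1(\ell)$ and $\lambda_2(k,\ell)$ in \cref{def:lambda}. Uniqueness of the solutions is guaranteed by \cref{lem:phi_dev}: since $\phi'\ge 3/5$, the map $\lambda\mapsto\phi(\lambda)-\lambda/\ell$ has derivative at least $3/5-1/\ell\ge 1/10$ for all $\ell\ge 2$, and $\lambda\mapsto\phi(\lambda)+\lambda/2$ has derivative at least $3/5+1/2>0$, so each equation has at most one root. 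Therefore the piecewise analysis is unambiguous, and continuity at the breakpoints of $\phi$ forces the three local solutions to glue together and to determine the correct sign choice in the quadratic cases.

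For $\lambda_1$, substituting the two linear pieces $\lambda+1/2$ and $(3/5)\lambda+3/4$ into $\phi(\lambda_1)-\lambda_1/\ell=1$ immediately yields $\lambda_1=\ell/(2(\ell-1))$ and $\lambda_1=5\ell/(4(3\ell-5))$. In the middle piece, setting $u=\sqrt{\lambda_1}$ reduces the equation to $(3-1/\ell)u^2-2\sqrt{3}u+1=0$ with discriminant $4/\ell$; the larger root $u=(\sqrt{3}+1/\sqrt{\ell})/(3-1/\ell)=\sqrt{\ell}(\sqrt{3\ell}+1)/(3\ell-1)$ then gives $\lambda_1=(3\ell^2+2\sqrt{3}\ell^{3/2}+\ell)/(3\ell-1)^2$. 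Direct computation shows these three formulas lie in the intervals $[3/4,1]$, $[25/48,3/4]$, and $[0,25/48]$ precisely on the ranges $\ell\in[2,3]$, $[3,25/3]$, and $[25/3,\infty)$, and that adjacent formulas agree at the shared endpoints (value $3/4$ at $\ell=3$ and value $25/48$ at $\ell=25/3$); this matching pins down the choice of the larger root in the quadratic piece.

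For $\lambda_2$ the argument is entirely parallel. The two linear pieces produce $\lambda_2=(2/3)(k/\ell+1/2)$ and $\lambda_2=(10/11)(k/\ell+1/4)$, while the middle piece gives $(7/2)u^2-2\sqrt{3}u+(1-k/\ell)=0$ with discriminant $14k/\ell-2$, whose larger root yields $\lambda_2=10/49+2k/(7\ell)+(4/49)\sqrt{6(7k/\ell-1)}$; pulling $\sqrt{7}$ out of the radical rewrites this as $10/49+2k/(7\ell)+(4/7)\sqrt{(6/7)(k/\ell-1/7)}$, exactly matching \cref{def:lambda}. The ranges $k/\ell\ge 5/8$, $31/96\le k/\ell\le 5/8$, and $k/\ell\le 31/96$ are determined by requiring the respective values of $\lambda_2$ to lie in $[3/4,1]$, $[25/48,3/4]$, and $[0,25/48]$; at the transition points $k/\ell=5/8$ and $k/\ell=31/96$ the adjacent formulas take the common values $3/4$ and $25/48$, again fixing the sign choice in the quadratic case. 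The only mildly tedious steps are the algebraic simplifications of the two quadratic roots, especially the square-root rewriting in the $\lambda_2$ formula; the main obstacle is purely bookkeeping, with no genuine analytic difficulty.
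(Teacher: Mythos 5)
Your proposal is correct and follows essentially the same route as the paper: establish monotonicity of $\phi(\lambda)-\lambda/\ell$ and $\phi(\lambda)+\lambda/2$ via \cref{lem:phi_dev}, solve the linear and quadratic equations piecewise, and select the root lying in the relevant $\lambda$-range (the paper does this by invoking $\lambda_1,\lambda_2\ge\frac{25}{48}$, which is equivalent to your endpoint-matching). The algebra, the discriminants, and the breakpoint computations ($\ell=3$, $\ell=\frac{25}{3}$, $k/\ell=\frac{31}{96}$, $k/\ell=\frac{5}{8}$) all check out.
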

\begin{proof}
By \cref{lem:phi_dev} and $\ell\ge2$, both of the continuous functions
\begin{equation}
\label{phi_solve:func}
\phi(\lambda)-\frac{1}{\ell}\lambda,\quad
\phi(\lambda)+\frac{1}{2}\lambda
\end{equation}
are strictly increasing for $\lambda\ge0$ and
take the value from $3/4$ to $+\infty$.
Thus, by the intermediate value theorem, $\lambda_1$ and $\lambda_2$ are well-defined.

We first consider $\lambda_1$.
If $\phi(\frac{25}{48})-\frac{25}{48\ell}\ge1$, i.e.~$\ell\ge\frac{25}{3}$, then
\[
1
=
\phi(\lambda_1)-\frac{1}{\ell}\lambda_1
=
\left(\frac{3}{5}-\frac{1}{\ell}\right)\lambda_1+\frac{3}{4}
\]
so that
\[
\lambda_1=\frac{5\ell}{4(3\ell-5)}.
\]
If $\phi(\frac{25}{48})-\frac{1}{\ell}\frac{25}{48}\le1\le\phi(\frac{3}{4})-\frac{3}{4\ell}$,
i.e.~$3\le\ell\le\frac{25}{3}$, then
\[
1
=
\phi(\lambda_1)-\frac{1}{\ell}\lambda_1
=
\left(3-\frac{1}{\ell}\right)\lambda_1+2(1-\sqrt{3\lambda_1})
\]
so that, by using $\frac{25}{48}\le\lambda_1$ in the current case,
\[
\lambda_1
=
\frac{3\ell^2+2\sqrt{3}\ell^{\frac{3}{2}}+\ell}{(3\ell-1)^2}.
\]
Finally, if $\phi(\frac{3}{4})-\frac{3}{4\ell}\le1$, i.e.~$2\le\ell\le3$,
then
\[
1
=
\phi(\lambda_1)-\frac{1}{\ell}\lambda_1
=
\left(1-\frac{1}{\ell}\right)\lambda_1+\frac{1}{2}
\]
so that
\[
\lambda_1=\frac{\ell}{2(\ell-1)}.
\]
This completes the proof of the assertion for $\lambda_1$.

We next consider $\lambda_2$.
If $1+\frac{k}{\ell}\le\phi(\frac{25}{48})+\frac{1}{2}\cdot\frac{25}{48}$,
i.e.~$k\le\frac{31}{96}\ell$, then
\[
1+\frac{k}{\ell}
=
\phi(\lambda_2)+\frac{1}{2}\lambda_2
=
\frac{11}{10}\lambda_2+\frac{3}{4}
\]
so that
\[
\lambda_2=\frac{10}{11}\left(\frac{k}{\ell}+\frac{1}{4}\right).
\]
If $\phi(\frac{25}{48})+\frac{1}{2}\cdot\frac{25}{48}
\le1+\frac{k}{\ell}\le\phi(\frac{3}{4})+\frac{1}{2}\cdot\frac{3}{4}$,
i.e.~$\frac{31}{96}\ell\le k\le\frac{5}{8}\ell$, then
\[
1+\frac{k}{\ell}
=
\phi(\lambda_2)+\frac{1}{2}\lambda_2
=
\frac{7}{2}\lambda_2+2(1-\sqrt{3\lambda_2})
\]
so that, by using $\frac{25}{48}\le\lambda_2$ in the current case,
\[
\lambda_2
=\frac{10}{49}+\frac{2k}{7\ell}+\frac{4}{7}\sqrt{\frac{6}{7}\left(\frac{k}{\ell}-\frac{1}{7}\right)}.
\]
Finally, if $\phi(\frac{3}{4})+\frac{1}{2}\cdot\frac{3}{4}
\le1+\frac{k}{\ell}$, i.e.~$\frac{5}{8}\ell\le k$,
then
\[
1+\frac{k}{\ell}
=
\phi(\lambda_2)+\frac{1}{2}\lambda_2
=
\frac{3}{2}\lambda_2+\frac{1}{2}
\]
so that
\[
\lambda_2=\frac{2}{3}\left(\frac{k}{\ell}+\frac{1}{2}\right).
\]
This completes the proof of the assertion for $\lambda_2$.
\end{proof}

\begin{lemma}
\label{lem:phi_solve_shift}
For positive integers $k,\ell$ with $\ell\ge2$
and a real number $\epsilon$ with $\epsilon>0$,
\[
\phi(\lambda)-\frac{1}{\ell}\lambda\le1-\frac{\epsilon}{10}\quad\text{and}\quad
\phi(\lambda)+\frac{1}{2}\lambda\le1+\frac{k}{\ell}-\frac{\epsilon}{10}
\]
provided
\begin{equation}
\label{phi_solve_shift:cond}
0\le\lambda\le\min(\lambda_1,\lambda_2)-\epsilon,
\end{equation}
where $\lambda_1,\lambda_2$ are the solutions of \cref{phi_solve:eq},
or equivalently, defined by \cref{def:lambda}.
\end{lemma}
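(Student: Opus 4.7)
The plan is to reduce this to the two facts already established: the value identities of \cref{lem:phi_solve} (which give $\phi(\lambda_1)-\lambda_1/\ell=1$ and $\phi(\lambda_2)+\lambda_2/2=1+k/\ell$) and the derivative bound $\phi'(\lambda)\ge 3/5$ from \cref{lem:phi_dev}. The second bound is what makes the argument trivial: since both auxiliary functions from \cref{phi_solve:func} have a uniform lower bound on their derivative, they cannot recover more than a linear amount in distance $\epsilon$, so shifting the argument by $\epsilon$ to the left forces them strictly below their critical values.

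Concretely, I would argue as follows. Set $f_1(\lambda)=\phi(\lambda)-\lambda/\ell$ and $f_2(\lambda)=\phi(\lambda)+\lambda/2$. Using $\phi'\ge3/5$ together with $\ell\ge2$, I would compute
\[
f_1'(\lambda)\ge \frac{3}{5}-\frac{1}{\ell}\ge\frac{3}{5}-\frac{1}{2}=\frac{1}{10},
\qquad
f_2'(\lambda)\ge\frac{3}{5}+\frac{1}{2}=\frac{11}{10}\ge\frac{1}{10}.
\]
Hence, by the mean value theorem, for every $\lambda\in[0,\lambda_i]$ one has
\[
f_i(\lambda_i)-f_i(\lambda)\ge\frac{1}{10}(\lambda_i-\lambda),\qquad i=1,2.
\]
Under the hypothesis \cref{phi_solve_shift:cond} we have $\lambda\le\lambda_i-\epsilon$ for both $i=1,2$, so $\lambda_i-\lambda\ge\epsilon$ and therefore
\[
f_i(\lambda)\le f_i(\lambda_i)-\frac{\epsilon}{10}.
\]
Plugging in the values $f_1(\lambda_1)=1$ and $f_2(\lambda_2)=1+k/\ell$ from \cref{lem:phi_solve} yields the two claimed inequalities.

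There is essentially no obstacle: the only thing to double-check is that $\phi'\ge3/5$ really holds across the three regimes of $\phi$, but this is already the content of \cref{lem:phi_dev}, so the argument is a one-line application of monotonicity plus the uniform derivative lower bound. The factor $1/10$ is not tight (one could use $11/10$ for $f_2$), but taking the weaker common lower bound makes the statement uniform in $i$ and matches the constant in the lemma.
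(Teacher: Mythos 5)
Your proposal is correct and is essentially identical to the paper's own proof: both reduce the claim to the value identities from \cref{lem:phi_solve} plus the uniform derivative lower bound $\ge\frac{1}{10}$ for the two functions in \cref{phi_solve:func} (coming from \cref{lem:phi_dev} and $\ell\ge2$), followed by the mean value theorem. No differences worth noting.
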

\begin{proof}
By the assumption $\ell\ge2$ and \cref{lem:phi_dev},
both of the functions \cref{phi_solve:func}
have the derivative of the size $\ge\frac{1}{10}$.
Thus, the mean value theorem and \cref{phi_solve_shift:cond} give
\[
\phi(\lambda)-\frac{1}{\ell}\lambda
\le
\phi(\lambda_1)-\frac{1}{\ell}\lambda_1
-\frac{1}{10}(\lambda_1-\lambda)
\le
1-\frac{\epsilon}{10}
\]
and
\[
\phi(\lambda)+\frac{1}{2}\lambda
\le
\phi(\lambda_2)+\frac{1}{2}\lambda_2
-\frac{1}{10}(\lambda_2-\lambda)
\le
1+\frac{k}{\ell}-\frac{\epsilon}{10}.
\]
This completes the proof.
\end{proof}

\begin{lemma}
\label{lem:monotonicity_lambda}
The functions $\lambda_1(\ell),\lambda_2(k,\ell)$
are strictly decreasing with respect to $\ell$.
\end{lemma}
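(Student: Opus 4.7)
The plan is to avoid the piecewise formulas of \cref{def:lambda} altogether and work with the implicit characterizations supplied by \cref{lem:phi_solve}: $\lambda_1 = \lambda_1(\ell)$ and $\lambda_2 = \lambda_2(k,\ell)$ are the unique $\lambda \ge 0$ solving
\[
\phi(\lambda_1) - \frac{\lambda_1}{\ell} = 1, \qquad \phi(\lambda_2) + \frac{\lambda_2}{2} = 1 + \frac{k}{\ell}.
\]
Uniqueness comes from \cref{lem:phi_dev}: since $\phi'(\lambda) \ge 3/5$, the maps $\lambda \mapsto \phi(\lambda) - \lambda/\ell$ (for any real $\ell \ge 2$) and $\lambda \mapsto \phi(\lambda) + \lambda/2$ are strictly increasing, with derivatives bounded below by $3/5 - 1/2 > 0$ and $3/5 + 1/2 > 0$ respectively.

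The case of $\lambda_2$ is the easier one and I would dispatch it first. The left-hand side of its defining equation does not depend on $\ell$ and is strictly increasing in the unknown $\lambda_2$, while the right-hand side $1 + k/\ell$ is strictly decreasing in $\ell$ for fixed positive $k$. Composing with the inverse of the strictly increasing left-hand side exhibits $\lambda_2(k,\ell)$ as a strictly decreasing function of $\ell$.

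For $\lambda_1$, I would introduce the auxiliary function $F(\lambda,\ell) := \phi(\lambda) - \lambda/\ell$. The key observations are that for any $\lambda > 0$ the partial map $\ell \mapsto F(\lambda,\ell)$ is strictly increasing (since $-1/\ell$ is), and for any $\ell \ge 2$ the partial map $\lambda \mapsto F(\lambda,\ell)$ is strictly increasing. Noting also that $\lambda_1(\ell) > 0$ (immediate from $F(0,\ell) = \phi(0) = 3/4 < 1$), for any $\ell' > \ell \ge 2$ we obtain
\[
F(\lambda_1(\ell),\ell') > F(\lambda_1(\ell),\ell) = 1 = F(\lambda_1(\ell'),\ell'),
\]
and the strict monotonicity of $F(\cdot,\ell')$ in the first variable forces $\lambda_1(\ell') < \lambda_1(\ell)$.

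There is no serious obstacle here: the whole argument is a direct consequence of the sign of $\partial F/\partial \ell$ combined with \cref{lem:phi_dev} and \cref{lem:phi_solve}. The only mild point requiring attention is the strict positivity of $\lambda_1(\ell)$, which is settled by evaluating $F$ at $\lambda = 0$. An alternative but much less clean route would be to differentiate each branch of \cref{def:lambda} in $\ell$ directly and match the resulting expressions at the breakpoints $\ell = 3$ and $\ell = 25/3$ (and similarly for $\lambda_2$ in $k$); the implicit-equation approach sketched above bypasses that bookkeeping entirely.
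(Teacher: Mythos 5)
Your proof is correct and follows essentially the same route as the paper: the paper also regards $\lambda_1,\lambda_2$ as the solutions of the implicit equations \cref{phi_solve:eq} and deduces monotonicity from the fact that the functions \cref{phi_solve:func} are increasing (via \cref{lem:phi_dev}). Your write-up merely spells out the details the paper leaves implicit, including the minor but correct observation that $\lambda_1(\ell)>0$ is needed for strictness.
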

\begin{proof}
By \cref{lem:phi_solve}, we $\lambda_1(\ell)$ and $\lambda_2(k,\ell)$
can be regarded as the solutions of the equations \cref{phi_solve:eq}.
Then, the lemma follows since the functions \cref{phi_solve:func} are increasing.
\end{proof}

As we mentioned in \cref{section:intro},
we shall apply the Poisson summation formula in order to detect
some cancellation over the sequence $n^{\ell}$.
In order to estimate the resulting exponential integrals,
we recall the next two standard estimates.

\begin{lemma}[First derivative estimate]
\label{lem:first_dev}
Let $\lambda$ be a positive real number,
and $f,g$ be real-valued functions defined over an interval $[a,b]$ satisfying
\begin{enumerate}
\renewcommand{\labelenumi}{\upshape(\Alph{enumi})}
\item $f$ is continuously differentiable on the interval $[a,b]$,
\item $f'$ is monotonic on the interval $[a,b]$, and
\item $f'$ satisfies $|f'(x)|\ge\lambda$ on the interval $[a,b]$.
\end{enumerate}
Then, by using notation \cref{def:norm}, we have
\[
\int_a^bg(x)e(f(x))dx\ll\|g\|\lambda^{-1},
\]
where the implicit constant is absolute.
\end{lemma}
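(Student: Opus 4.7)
The plan is to prove this by integration by parts in its Riemann--Stieltjes form, rewriting the integrand in such a way that the oscillation of $e(f(x))$ is absorbed into a derivative.

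First I would write
\[
\int_a^b g(x)e(f(x))\,dx
=
\int_a^b \frac{g(x)}{2\pi i f'(x)}\,d\bigl(e(f(x))\bigr),
\]
which is legitimate because assumptions (A) and (C) guarantee that $f'$ is continuous and non-vanishing on $[a,b]$, so $g/f'$ is of bounded variation on $[a,b]$. Applying integration by parts in the Riemann--Stieltjes sense gives
\[
\int_a^b g(x)e(f(x))\,dx
=
\left[\frac{g(x)e(f(x))}{2\pi i f'(x)}\right]_{a}^{b}
-
\int_a^b e(f(x))\,d\!\left(\frac{g(x)}{2\pi i f'(x)}\right).
\]
The boundary term is trivially $\ll \|g\|_\infty \lambda^{-1} \le \|g\|\lambda^{-1}$ by hypothesis (C).

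For the remaining Stieltjes integral, I would bound it in absolute value by the total variation of $g/(2\pi i f')$ on $[a,b]$. Using the elementary product rule for total variation,
\[
V_{[a,b]}\!\left(\frac{g}{f'}\right)
\le
\sup_{x\in[a,b]}|g(x)|\cdot V_{[a,b]}\!\left(\frac{1}{f'}\right)
+
V_{[a,b]}(g)\cdot\sup_{x\in[a,b]}\left|\frac{1}{f'(x)}\right|.
\]
By hypothesis (B), $f'$ is monotonic, so $1/f'$ is also monotonic on $[a,b]$ and therefore
\[
V_{[a,b]}\!\left(\frac{1}{f'}\right)
=
\left|\frac{1}{f'(b)}-\frac{1}{f'(a)}\right|
\le
\frac{2}{\lambda},
\]
while $\sup|1/f'|\le 1/\lambda$ by (C). Plugging these in yields $V_{[a,b]}(g/f')\ll \|g\|/\lambda$, and hence the Stieltjes integral is also $\ll \|g\|\lambda^{-1}$. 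Combining with the boundary term completes the proof.

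There is no substantial obstacle here; this is a classical estimate. The only mildly delicate point is justifying the integration by parts when $g$ is merely of bounded variation (rather than, say, $C^1$), which is why the statement is phrased in terms of the BV-norm \eqref{def:norm}; this is handled cleanly by working in the Riemann--Stieltjes framework and using the product-rule bound on total variation above.
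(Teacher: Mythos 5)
Your proof is correct. The paper does not prove this lemma at all---it simply cites Lemma~2.1 of Ivi\'c's book---and your Riemann--Stieltjes integration-by-parts argument, with the product rule for total variation and the observation that $1/f'$ is monotonic of constant sign so that $V_{[a,b]}(1/f')\le 2/\lambda$, is precisely the standard proof of that cited result.
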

\begin{proof}
See \cite[Lemma 2.1, p.~56]{Ivic}.
\end{proof}

\begin{lemma}[Second derivative estimate]
\label{lem:second_dev}
Let $\lambda$ be a positive real number,
and $f,g$ be real-valued functions defined over an interval $[a,b]$ satisfying
\begin{enumerate}
\renewcommand{\labelenumi}{\upshape(\Alph{enumi})}
\item $f$ is twice continuously differentiable on the interval $[a,b]$,
\item $f''$ satisfies $|f''(x)|\ge\lambda$ on the interval $[a,b]$.
\end{enumerate}
Then, by using notation \cref{def:norm}, we have
\[
\int_a^bg(x)e(f(x))dx\ll\|g\|\lambda^{-\frac{1}{2}},
\]
where the implicit constant is absolute.
\end{lemma}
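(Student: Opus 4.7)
The plan is to reduce to the case $g \equiv 1$ using the bounded variation norm, and then prove the oscillatory estimate $\int_u^v e(f(x))\,dx \ll \lambda^{-1/2}$ by a classical splitting argument around the point where $|f'|$ is smallest.

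First, I would reduce to $g \equiv 1$. Writing $g$ as a function of bounded variation in the form $g(x) = g(a) + \int_a^x dg(t)$ and interchanging the order of integration gives
\[
\int_a^b g(x) e(f(x))\,dx
= g(a) \int_a^b e(f(x))\,dx
+ \int_a^b \left(\int_t^b e(f(x))\,dx\right) dg(t).
\]
Once we know that $\int_u^v e(f(x))\,dx \ll \lambda^{-1/2}$ uniformly for every subinterval $[u,v] \subseteq [a,b]$, the right-hand side is bounded by $(|g(a)| + V_{[a,b]}(g))\lambda^{-1/2} \ll \|g\|\lambda^{-1/2}$, which is the desired inequality.

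For the $g \equiv 1$ case, after replacing $f$ by $-f$ if necessary, I may assume $f''(x) \ge \lambda$ on $[a,b]$, so that $f'$ is non-decreasing. I would pick $c \in [a,b]$ minimizing $|f'|$: either $f'$ vanishes at some interior point $c$, or $f'$ has constant sign on $[a,b]$ and $c$ is an endpoint. Fix a parameter $\delta > 0$ and split the interval into the core $I_0 = [c-\delta, c+\delta] \cap [a,b]$ and the two wings $I_\pm$. On $I_0$ I estimate trivially by $|e(f(x))| = 1$ to obtain a contribution of $O(\delta)$. On each wing the monotonicity of $f'$ together with the lower bound $f''(x) \ge \lambda$ forces $|f'(x)| \ge \lambda \delta$, so \cref{lem:first_dev} (applied with $g \equiv 1$, hence with the implicit $\|g\| = 1$) yields a contribution of $O((\lambda\delta)^{-1})$. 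Balancing the two sides by choosing $\delta = \lambda^{-1/2}$ gives the bound $O(\lambda^{-1/2})$, and the same reasoning applies on any subinterval of $[a,b]$.

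The only subtle point is the possibility that $c \pm \delta$ falls outside $[a,b]$, in which case the core simply shrinks and the wing estimate only improves, so the argument goes through without modification. Apart from this minor boundary bookkeeping, the scheme is standard and matches the textbook treatment in \cite{Ivic}; the main obstacle is not the analysis itself but the careful verification that the $g \equiv 1$ bound holds uniformly in the subinterval, which is what makes the reduction in the first paragraph legitimate.
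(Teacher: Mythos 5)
Your proof is correct and is the classical van der Corput second-derivative argument (trivial bound on a $\delta$-neighbourhood of the stationary point, first-derivative estimate on the wings where $|f'|\ge\lambda\delta$, then $\delta=\lambda^{-1/2}$), combined with the standard Riemann--Stieltjes reduction to $g\equiv1$; this is exactly the proof of the result the paper cites as \cite[Lemma 2.2, p.~56]{Ivic}. No issues.
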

\begin{proof}
See \cite[Lemma 2.2, p.~56]{Ivic}.
\end{proof}

\section{Preliminary calculations}
In this section, we carry out preliminary calculations for the proof of \cref{thm:main_kl}.
We first replace $\log p$ in \cref{def:R} by the von Mangoldt function.

\begin{lemma}
\label{lem:to_Lambda}
For positive integers $k,\ell$
and real numbers $X,H,\epsilon$ with $4\le H\le X$ and $\epsilon>0$,
we have
\[
\sum_{X<N\le X+H}R(N)
=
\sum_{X<m^{k}+n^{\ell}\le X+H}\Lambda(m)
+
O(HX^{\frac{1}{k}+\frac{1}{\ell}-1}B^{-1})
\]
provided
\begin{equation}
\label{to_Lambda:H_cond}
X^{1-\min(\frac{1}{k},\frac{k}{\ell(k-1)})+\epsilon}\le H\le X^{1-\epsilon},
\end{equation}
where the implicit constant is absolute.
\end{lemma}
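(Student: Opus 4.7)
The plan is to bound the difference between the two sides by isolating the contribution of higher prime powers. Since $\Lambda(m)$ is supported on prime powers, expanding gives
\[
\sum_{X<m^k+n^\ell\le X+H}\Lambda(m)
=
\sum_{j\ge1}\sum_{\substack{p\\X<p^{jk}+n^\ell\le X+H}}\log p,
\]
and the $j=1$ slice is exactly $\sum_{X<N\le X+H}R(N)$. Thus the error to estimate is
\[
E=\sum_{j\ge2}\sum_{\substack{p\\X<p^{jk}+n^\ell\le X+H}}\log p,
\]
and the goal is to show $E\ll HX^{\frac{1}{k}+\frac{1}{\ell}-1}B^{-1}$ in the stated range for $H$.

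For each fixed $j\ge2$, I use $\log p\le L$ and drop the primality of $p$, so that \cref{lem:beta_short} applied with $k$ replaced by $jk$ yields
\[
\sum_{X<p^{jk}+n^\ell\le X+H}\log p
\ll
L\left(HX^{\frac{1}{jk}+\frac{1}{\ell}-1}+H^{\frac{1}{jk}}+X^{\frac{1}{\ell}}\right).
\]
The constraint $p^{jk}\le 2X$ limits $j$ to $O(L)$ values, and each of the three terms on the right is nonincreasing in $j$. Summing over $j\ge2$ therefore gives
\[
E\ll L^2\left(HX^{\frac{1}{2k}+\frac{1}{\ell}-1}+H^{\frac{1}{2k}}+X^{\frac{1}{\ell}}\right).
\]

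It remains to check that each term is $\ll HX^{\frac{1}{k}+\frac{1}{\ell}-1}B^{-1}$. The first reduces to $L^2B\ll X^{\frac{1}{2k}}$, which is automatic since $B$ grows slower than any power of $X$. The third reduces to $H\ge L^2BX^{1-\frac{1}{k}}$, which is implied by the hypothesis because $\min\bigl(\frac{1}{k},\frac{k}{\ell(k-1)}\bigr)\le\frac{1}{k}$ and the exponent gain $\epsilon$ absorbs $L^2B$ for $X$ large. The decisive term is the middle one, $L^2H^{\frac{1}{2k}}$: the required inequality rearranges to $H^{1-\frac{1}{2k}}\gg L^2BX^{1-\frac{1}{k}-\frac{1}{\ell}}$, and substituting $H\ge X^{1-\delta+\epsilon}$ with $\delta=\min\bigl(\frac{1}{k},\frac{k}{\ell(k-1)}\bigr)$ reduces the needed condition to
\[
\delta\le\frac{\ell+2k}{\ell(2k-1)}.
\]
A direct computation verifies this for both candidate values of $\delta$: for $\delta=\frac{1}{k}$ it becomes $\ell(k-1)\le2k^2$, which is automatic because $\frac{1}{k}$ is the active minimum precisely when $\ell(k-1)\le k^2$; for $\delta=\frac{k}{\ell(k-1)}$ it reduces to $k\le\ell(k-1)$, trivially valid for $k\ge2$, $\ell\ge2$. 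The case $k=1$ is even easier, since then the middle term is $L^2H^{\frac{1}{2}}$ and the hypothesis only requires $H\ge X^{\epsilon}$.

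The main obstacle is precisely this calibration: the exponent $\frac{k}{\ell(k-1)}$ in the hypothesis on $H$ is engineered exactly to absorb the $H^{\frac{1}{2k}}$ contribution from prime squares ($j=2$) in the \cref{lem:beta_short} bound. Any weaker hypothesis would leave this term uncontrolled, so this is what fixes the permissible range of $H$ in the lemma.
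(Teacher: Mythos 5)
Your proof is correct and follows essentially the same route as the paper: split $\Lambda$ over prime powers, identify the $j=1$ slice with $\sum R(N)$, bound the $j\ge2$ contribution via \cref{lem:beta_short} (whose implied constant is absolute, so the sum over $O(L)$ values of $j$ is harmless), and verify the three error terms against the hypothesis on $H$. The only difference is cosmetic: you keep the sharper $H^{\frac{1}{2k}}$ where the paper crudely uses $H^{\frac{1}{k}}$, which is why your calibration yields the weaker requirement $\delta\le\frac{\ell+2k}{\ell(2k-1)}$ rather than showing that $\frac{k}{\ell(k-1)}$ is forced by this term (it is in fact forced by the $H^{\frac{1}{k}}$ bound, which recurs in \cref{lem:red}); this does not affect the validity of your argument.
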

\begin{proof}
By definition \cref{def:R} of $R(N)$,
\begin{equation}
\label{to_Lambda:first}
\begin{aligned}
\sum_{X<N\le X+H}R(N)
&=
\sum_{X<p^{k}+n^{\ell}\le X+H}\log p\\
&=
\sum_{X<m^{k}+n^{\ell}\le X+H}\Lambda(m)
-
\sum_{\nu=2}^{O(L)}\sum_{X<p^{\nu k}+n^{\ell}\le X+H}\log p.
\end{aligned}
\end{equation}
Note that the implicit constant in \cref{lem:beta_short} is absolute.
Therefore, by \cref{lem:beta_short},
the second term on the right hand side is bounded by
\[
\ll
L\sum_{\nu=2}^{O(L)}(HX^{\frac{1}{\nu k}+\frac{1}{\ell}-1}+H^{\frac{1}{\nu k}}+X^{\frac{1}{\ell}})
\ll
(HX^{\frac{1}{2k}+\frac{1}{\ell}-1}+H^{\frac{1}{k}}+X^{\frac{1}{\ell}})L^2,
\]
which is $\ll HX^{\frac{1}{k}+\frac{1}{\ell}-1}B^{-1}$
provided \cref{to_Lambda:H_cond}.
This completes the proof.
\end{proof}

We then modify the sum on the right-hand side of \cref{lem:to_Lambda}
in order to insert the explicit formula given by \cref{lem:vonMangoldt}.

\begin{lemma}
\label{lem:red}
For positive integers $k,\ell$
and real numbers $X,H,\epsilon$ with $4\le H\le X$ and $\epsilon>0$,
we have
\begin{align}
&\sum_{X<N\le X+H}R(N)\\
&=
\sum_{n^\ell\le X}
\left(\psi\left((X+H-n^{\ell})^{\frac{1}{k}}\right)-\psi\left((X-n^{\ell})^{\frac{1}{k}}\right)\right)
+
O(HX^{\frac{1}{k}+\frac{1}{\ell}-1}B^{-1})
\end{align}
provided
\begin{equation}
\label{red:H_cond}
X^{1-\min(\frac{1}{k},\frac{k}{\ell(k-1)})+\epsilon}\le H\le X^{1-\epsilon},
\end{equation}
where the implicit constant is absolute.
\end{lemma}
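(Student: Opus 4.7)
The plan is to start from \cref{lem:to_Lambda}, which already reduces the sum over $R(N)$ to a sum weighted by $\Lambda(m)$ with the same target error $HX^{\frac{1}{k}+\frac{1}{\ell}-1}B^{-1}$. Thus it suffices to show
\[
\sum_{X<m^{k}+n^{\ell}\le X+H}\Lambda(m)
=\sum_{n^{\ell}\le X}\Bigl(\psi\bigl((X+H-n^{\ell})^{\frac{1}{k}}\bigr)-\psi\bigl((X-n^{\ell})^{\frac{1}{k}}\bigr)\Bigr)
+O(HX^{\frac{1}{k}+\frac{1}{\ell}-1}B^{-1})
\]
under the hypothesis \cref{red:H_cond}.

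First I would swap the order of summation, fixing $n$ and summing over $m$. For each $n$ with $n^\ell \le X+H$, the inner sum is over $m$ in the interval $\bigl((X-n^\ell)^{1/k}, (X+H-n^\ell)^{1/k}\bigr]$ when $n^\ell \le X$, and over $m$ in $\bigl(0,(X+H-n^\ell)^{1/k}\bigr]$ when $X < n^\ell \le X+H$. By the definition \cref{def:psi} of $\psi$, this gives
\[
\sum_{X<m^{k}+n^{\ell}\le X+H}\Lambda(m)
=\sum_{n^{\ell}\le X}\Bigl(\psi\bigl((X+H-n^\ell)^{\frac{1}{k}}\bigr)-\psi\bigl((X-n^\ell)^{\frac{1}{k}}\bigr)\Bigr)
+\sum_{X<n^{\ell}\le X+H}\psi\bigl((X+H-n^\ell)^{\frac{1}{k}}\bigr),
\]
and the task reduces to bounding the second sum on the right by $O(HX^{\frac{1}{k}+\frac{1}{\ell}-1}B^{-1})$.

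For the remainder sum, I would use the Chebyshev-type bound $\psi(y) \ll y$, giving $\psi\bigl((X+H-n^\ell)^{1/k}\bigr) \ll (X+H-n^\ell)^{1/k} \le H^{1/k}$ for $n^\ell > X$. Combined with \cref{lem:power_short} to count the $n$ in this range, the contribution is
\[
\ll H^{\frac{1}{k}}\bigl(HX^{\frac{1}{\ell}-1}+1\bigr)
= H^{1+\frac{1}{k}}X^{\frac{1}{\ell}-1}+H^{\frac{1}{k}}.
\]
It then remains to check that both terms are absorbed by $HX^{\frac{1}{k}+\frac{1}{\ell}-1}B^{-1}$. The first bound requires $H^{1/k} \ll X^{1/k}B^{-1}$, i.e.\ $H \le X B^{-k}$, which is implied by the upper bound $H \le X^{1-\epsilon}$ since $B$ is of size $X^{o(1)}$. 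The second bound requires $H^{1-1/k} \gg BX^{1-\frac{1}{k}-\frac{1}{\ell}}$ (interpreted trivially for $k=1$); this rearranges to $H \gg B^{k/(k-1)}X^{1-k/(\ell(k-1))}$, which is precisely why the hypothesis \cref{red:H_cond} contains the exponent $1-\min(\frac{1}{k},\frac{k}{\ell(k-1)})$ (with the convention $\min(\cdot,\infty)$ when $k=1$).

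The step that needs the most care is just the splitting of the $n$-range and the justification that the case $X < n^\ell \le X+H$ really only contributes an error; the estimates themselves are routine once the correct threshold $H \ge X^{1-k/(\ell(k-1))+\epsilon}$ is matched against the $H^{1/k}$ term. Everything else follows by inserting these bounds into the identity above and combining with \cref{lem:to_Lambda}.
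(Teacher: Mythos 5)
Your proposal is correct and follows essentially the same route as the paper: invoke \cref{lem:to_Lambda}, truncate the $n$-summation at $n^{\ell}\le X$, bound the tail $X<n^{\ell}\le X+H$ by $H^{1/k}$ per term times the count from \cref{lem:power_short}, and absorb $H^{1+\frac{1}{k}}X^{\frac{1}{\ell}-1}+H^{\frac{1}{k}}$ into the error using both inequalities in \cref{red:H_cond}. The paper phrases the tail bound as "the argument similar to the beginning of the proof of \cref{lem:beta_short}," which is exactly your $\psi(y)\ll y$ step, so there is no substantive difference.
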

\begin{proof}
We truncate the summation over $n$ in \cref{lem:to_Lambda}.
By using \cref{lem:power_short} and the argument
similar to the beginning of the proof of \cref{lem:beta_short},
\[
\sum_{X<n^\ell\le X+H}\sum_{X-n^\ell<m^k\le X+H-n^\ell}\Lambda(m)
\ll
H^{1+\frac{1}{k}}X^{\frac{1}{\ell}-1}+H^{\frac{1}{k}}
\ll
HX^{\frac{1}{k}+\frac{1}{\ell}-1}B^{-1}
\]
provided \cref{red:H_cond}.
Thus we can employ the truncation as
\[
\sum_{X<m^{k}+n^{\ell}\le X+H}\Lambda(m)
=
\sum_{n^\ell\le X}\sum_{X-n^\ell<m^k\le X+H-n^\ell}\Lambda(m)
+
O(HX^{\frac{1}{k}+\frac{1}{\ell}-1}B^{-1}).
\]
By recalling the notation \cref{def:psi}, we arrive at
\begin{equation}
\begin{aligned}
&\sum_{X<m^{k}+n^{\ell}\le X+H}\Lambda(m)\\
&=
\sum_{n^\ell\le X}
\left(\psi\left((X+H-n^{\ell})^{\frac{1}{k}}\right)-\psi\left((X-n^{\ell})^{\frac{1}{k}}\right)\right)
+
O(HX^{\frac{1}{k}+\frac{1}{\ell}-1}B^{-1}).
\end{aligned}
\end{equation}
By substituting this formula into \cref{lem:to_Lambda},
we obtain the lemma.
\end{proof}

\section{Detection of the cancellation over the $\ell$-th powers}
In this section, we derive an expansion for the sum
\[
\sum_{n^\ell\le X}\psi\left((Q-n^\ell)^{\frac{1}{k}}\right),\quad
X\le Q\le X+H,
\]
or its difference
\begin{equation}
\label{S_diff}
\sum_{n^\ell\le X}\left(\psi\left((X+H-n^\ell)^{\frac{1}{k}}\right)-\psi\left((X-n^\ell)^{\frac{1}{k}}\right)\right)
\end{equation}
by which we try to detect some cancellation caused by the average over $n^{\ell}$.
This expansion will be given by \cref{lem:S_explicit_pre} and \cref{lem:S_Poisson}.
We first substitute \cref{lem:vonMangoldt}.
\begin{lemma}
\label{lem:S_explicit_pre}
Let $k,\ell$ be positive integers, and $X,H,Q,T$ be real numbers
satisfying $4\le H\le X$, $X\le Q\le X+H$ and $1\le T\le X^{\frac{1}{k}}$. Then,
\begin{equation}
\label{explicit_pre}
\sum_{n^\ell\le X}\psi\left((Q-n^\ell)^{\frac{1}{k}}\right)
=
S(Q)-\sum_{\substack{\rho\\|\gamma|\le T}}S_{\rho}(Q)+O(X^{\frac{1}{k}+\frac{1}{\ell}}T^{-1}L^2),
\end{equation}
where $S(Q)$ and $S_{\rho}(Q)$ are given by
\[
S(Q)=\sum_{n^\ell\le X}(Q-n^\ell)^{\frac{1}{k}},\quad
S_{\rho}(Q)=\frac{1}{\rho}\sum_{n^\ell\le X}(Q-n^\ell)^{\frac{\rho}{k}}
\]
as defined in \cref{def:S}, and the implicit constant is absolute.
\end{lemma}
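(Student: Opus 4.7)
The identity \cref{explicit_pre} is essentially termwise application of the von Mangoldt explicit formula (\cref{lem:vonMangoldt}), so the plan is to apply that lemma to each individual $\psi((Q-n^{\ell})^{1/k})$ and sum over $n^{\ell}\le X$. The only thinking required is range verification and a clean swap of summations.

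First, for each integer $n$ with $n^\ell\le X$, set $x_n=(Q-n^\ell)^{1/k}$. Since $0\le Q-n^\ell\le Q\le X+H\le 2X$, we have $0\le x_n\le(2X)^{1/k}$. I would apply \cref{lem:vonMangoldt} with its parameter ``$X$'' taken to be $Y:=(2X)^{1/k}$; since $2\le T\le X^{1/k}\le Y\le 2Y$ (we can treat the trivial range $T\in[1,2)$ separately, where the zero-sum is empty and the error $X^{1/k+1/\ell}T^{-1}L^2$ trivially dominates via PNT applied termwise), the hypotheses are satisfied. The lemma then yields, uniformly in $n^{\ell}\le X$,
\[
\psi(x_n)
=
x_n-\sum_{\substack{\rho\\|\gamma|\le T}}\frac{x_n^{\rho}}{\rho}+O\bigl(X^{\frac{1}{k}}T^{-1}L^2\bigr),
\]
where I have absorbed the harmless factor of $2^{1/k}$ into the implicit constant and replaced $\log Y$ by $L$.

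Next, I sum this over all $n$ with $n^\ell\le X$. The main term produces $\sum_{n^\ell\le X}(Q-n^\ell)^{1/k}=S(Q)$ by the definition in \cref{def:S}. Interchanging the finite sums over $n$ and over zeros $\rho$ (legal since both are finite) gives
\[
\sum_{\substack{\rho\\|\gamma|\le T}}\frac{1}{\rho}\sum_{n^\ell\le X}(Q-n^\ell)^{\rho/k}
=
\sum_{\substack{\rho\\|\gamma|\le T}}S_{\rho}(Q).
\]
Finally, the accumulated error is bounded by $\#\{n:n^\ell\le X\}$ times $X^{1/k}T^{-1}L^2$. By \cref{lem:power_short} (or a trivial count), the number of such $n$ is $\ll X^{1/\ell}$, yielding the claimed error $O(X^{1/k+1/\ell}T^{-1}L^2)$.

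There is no genuine obstacle here; the lemma is a bookkeeping step that sets up the subsequent Poisson summation. The only mild point of care is verifying that \cref{lem:vonMangoldt} applies uniformly despite $x_n$ ranging up to $(2X)^{1/k}$ rather than exactly $X^{1/k}$, which is why I rescale the parameter to $Y=(2X)^{1/k}$ before invoking it.
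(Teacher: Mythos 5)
Your proof is correct and is essentially the paper's own argument: the paper's proof consists of the single line ``This follows immediately by inserting Lemma~\ref{lem:vonMangoldt},'' i.e.\ exactly the termwise application of the explicit formula followed by summation over $n^{\ell}\le X$, with the error accumulating to $X^{\frac{1}{\ell}}\cdot X^{\frac{1}{k}}T^{-1}L^2$. Your extra care about rescaling the parameter to $(2X)^{\frac{1}{k}}$ and handling $1\le T<2$ separately is a legitimate filling-in of details the paper leaves implicit.
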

\begin{proof}
This follows immediately by inserting \cref{lem:vonMangoldt}.
\end{proof}

Our next task is to detect the cancellation in the sum $S_{\rho}(Q)$.
We prepare the next lemma in order to estimate exponential integrals.

\begin{lemma}
\label{lem:exp_int}
For positive integers $k,\ell$, an integer $n$ not necessarily positive,
and real numbers $\alpha,\gamma,Q,U,V$ with $\alpha\le1$, $|\gamma|\ge1$
and $1\le U\le V\le Q$, we have
\[
\int_{U}^{V}u^{\alpha+\frac{i\gamma}{k}-1}e\left(n(Q-u)^{\frac{1}{\ell}}\right)du
\ll
\left\{
\begin{array}{ll}
\displaystyle
\frac{V^{\alpha}L}{|\gamma|^{\frac{1}{2}}}&(\text{if $\alpha\ge0$}),\\[4mm]
\displaystyle
\frac{U^{\alpha}L}{|\gamma|^{\frac{1}{2}}}&(\text{if $\alpha\le0$}),\\[4mm]
\displaystyle
\frac{Q^{1-\frac{1}{\ell}}}{|n|}&(\text{if $|n|>\ell Q^{1-\frac{1}{\ell}}|\gamma|$}),\\
\end{array}
\right.
\]
where the implicit constant depends on $k,\ell$ and $\epsilon$.
\end{lemma}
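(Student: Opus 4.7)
The plan is to make the substitution $v = (Q-u)^{1/\ell}$, which converts the integral into
\[
I = \ell\!\int_{(Q-V)^{1/\ell}}^{(Q-U)^{1/\ell}} (Q-v^\ell)^{\alpha-1}\, v^{\ell-1}\, e\!\left(\tfrac{\gamma}{2\pi k}\log(Q-v^\ell) + nv\right) dv.
\]
A direct calculation reveals that the new phase $\widetilde F(v) = \tfrac{\gamma}{2\pi k}\log(Q-v^\ell) + nv$ satisfies
\[
\widetilde F''(v) = -\frac{\gamma\,\ell\,v^{\ell-2}\bigl[(\ell-1)Q + v^\ell\bigr]}{2\pi k\,(Q-v^\ell)^2},
\]
which has constant sign (opposite to that of $\gamma$) throughout the integration range. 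This crucial structural observation removes the possibility of cancellation in the second derivative between the two parts of the original phase, and also ensures that $\widetilde F'$ is monotone, which is required for \cref{lem:first_dev}. A short differentiation check moreover shows that the new amplitude $\widetilde A(v) = (Q-v^\ell)^{\alpha-1}\,v^{\ell-1}$ is monotone non-decreasing for $\alpha \le 1$, with sup-norm bounded by $Q^{1-1/\ell}$ (since $U \ge 1$ yields $(Q-v^\ell)^{\alpha-1} \le 1$ and $v^{\ell-1}\le Q^{1-1/\ell}$) and total variation of the same order.

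For bound (3), under the hypothesis $|n| > \ell Q^{1-1/\ell}|\gamma|$, the trivial bounds $v \le Q^{1/\ell}$ and $Q-v^\ell \ge U \ge 1$ show that the second term of $\widetilde F'(v) = n - \gamma\ell v^{\ell-1}/(2\pi k(Q-v^\ell))$ has modulus at most $|n|/2$, so $|\widetilde F'(v)| \ge |n|/2$. Applying \cref{lem:first_dev} with $\lambda \asymp |n|$ then yields $I \ll Q^{1-1/\ell}/|n|$. For bounds (1) and (2), partition $[U,V]$ dyadically into $O(L)$ intervals $[A, 2A]$ and apply \cref{lem:second_dev} on each piece. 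Using $(\ell-1)Q + v^\ell \ge (\ell-1)Q$ and $Q-v^\ell \le Q$ gives the lower bound $|\widetilde F''(v)| \gg |\gamma|v^{\ell-2}/Q$, which combined with the sup-bound $\sup \widetilde A \ll A^{\alpha-1}\,Q^{1-1/\ell}$ on the piece (noting $Q-v^\ell \asymp A$ there) produces a contribution $\ll A^\alpha/|\gamma|^{1/2}$. Summing the geometric series in $A$ gives $V^\alpha L/|\gamma|^{1/2}$ for $\alpha \ge 0$ and $U^\alpha L/|\gamma|^{1/2}$ for $\alpha \le 0$, the factor $L$ being necessary only at $\alpha = 0$.

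The main obstacle is handling the regime $A \asymp Q$, where $v$ ranges down to $0$ and the $v^{\ell-2}$ factor in the lower bound for $|\widetilde F''|$ degenerates. This requires a secondary dyadic decomposition in $v$, but the resulting geometric sum still closes: the amplitude factor $v^{\ell-1}$ combines with $v^{-(\ell-2)/2}$ from $|\widetilde F''|^{-1/2}$ to produce $v^{\ell/2}$, whose maximum $Q^{1/2}$ exactly cancels the $Q^{1/2}$ appearing in $|\widetilde F''|^{-1/2}$, yielding again $\ll A^\alpha/|\gamma|^{1/2}$ from this piece.
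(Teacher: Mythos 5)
Your change of variables $v=(Q-u)^{1/\ell}$ is a genuinely different route from the paper's: the paper keeps the variable $u$, writes the phase as $F(u)=n(Q-u)^{1/\ell}+\frac{\gamma}{2\pi k}\log u$, and splits into cases according to the relative signs of $n$ and $\gamma$, applying the second-derivative test (\cref{lem:second_dev}) when the two terms of $F''$ reinforce and the first-derivative test (\cref{lem:first_dev}) when the two terms of $F'$ reinforce. Your substitution makes the $n$-oscillation linear, so $\widetilde F''$ has a fixed sign independent of $n$, which removes the case analysis; your treatment of the third bound and the monotonicity and variation bookkeeping for the new amplitude are correct.

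However, the key estimate in your treatment of bounds (1) and (2) is quantitatively wrong. On a dyadic piece with $u=Q-v^{\ell}\asymp A$ and $A\ll Q$, you bound $|\widetilde F''|\gg|\gamma|v^{\ell-2}/Q$ by replacing the denominator $(Q-v^{\ell})^{2}$ with $Q^{2}$; since $v\asymp Q^{1/\ell}$ on such a piece, combining this with $\sup\widetilde A\ll A^{\alpha-1}Q^{1-1/\ell}$ gives
\[
A^{\alpha-1}Q^{1-\frac{1}{\ell}}\cdot\left(\frac{|\gamma|Q^{1-\frac{2}{\ell}}}{Q}\right)^{-\frac{1}{2}}
\asymp\frac{A^{\alpha-1}Q}{|\gamma|^{\frac{1}{2}}},
\]
which exceeds the claimed $A^{\alpha}|\gamma|^{-1/2}$ by the factor $Q/A$, and summing over $A$ then fails to produce the stated bounds. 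The fix is to use $Q-v^{\ell}\asymp A$ (which you already invoke for the amplitude) also in the denominator of $\widetilde F''$: this gives $|\widetilde F''|\gg|\gamma|v^{\ell-2}Q/A^{2}\gg|\gamma|Q^{2-2/\ell}/A^{2}$, whence \cref{lem:second_dev} yields $A^{\alpha-1}Q^{1-1/\ell}\cdot A\,|\gamma|^{-1/2}Q^{-(1-1/\ell)}=A^{\alpha}|\gamma|^{-1/2}$ as required. Your secondary dyadic decomposition in $v$ is then needed only on the top piece $A\asymp Q$, where $(Q-v^{\ell})^{2}\le Q^{2}$ is not lossy and your computation closes correctly. (A minor further point: the inequality $(\ell-1)Q+v^{\ell}\ge(\ell-1)Q$ is vacuous for $\ell=1$, which the lemma formally allows; there one uses $(\ell-1)Q+v^{\ell}=v$ directly and the argument degenerates to the paper's.)
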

\begin{proof}
We rewrite the left-hand side as
\begin{equation}
\label{exp_int:first}
\int_{U}^{V}u^{\alpha+\frac{i\gamma}{k}-1}e\left(n(Q-u)^{\frac{1}{\ell}}\right)du
=
\int_{U}^{V}G(u)e(F(u))du,
\end{equation}
where
\[
F(u)=n(Q-u)^{\frac{1}{\ell}}+\frac{\gamma}{2\pi k}\log u,\quad
G(u)=u^{\alpha-1}.
\]
Then,
\begin{equation}
\label{exp_int:dev}
F'(u)=-\frac{1}{\ell}n(Q-u)^{\frac{1}{\ell}-1}+\frac{\gamma}{2\pi ku},\quad
F''(u)=-\frac{\ell-1}{\ell^2}n(Q-u)^{\frac{1}{\ell}-2}-\frac{\gamma}{2\pi ku^2}
\end{equation}
and since $G(u)$ is non-increasing, by using the notation \cref{def:norm},
\[
\|G\|_{BV([R,R'])}\ll R^{\alpha-1}
\]
for any subinterval $[R,R']\subset[U,V]$.

For the former two estimates, we dissect the integral \cref{exp_int:first} dyadically as
\begin{equation}
\label{exp_int:dyadic}
\ll
L\sup_{U<R\le V}\left|%
\int_{R}^{\min(2R,V)}G(u)e(F(u))du\right|.
\end{equation}
If $n$ and $\gamma$ have the same signs, then we have
\[
|F''(u)|\ge\frac{|\gamma|}{2\pi k(2R)^2}
\]
for $u\in[R,\min(2R,V)]$. Therefore, by \cref{lem:second_dev},
\begin{equation}
\label{exp_int:first_dev}
\int_{R}^{\min(2R,V)}u^{\alpha+\frac{i\gamma}{k}-1}e\left(n(Q-u)^{\frac{1}{\ell}}\right)du
\ll
R^{\alpha-1}\left(\frac{|\gamma|}{R^2}\right)^{-\frac{1}{2}}
\ll
\frac{R^{\alpha}}{|\gamma|^{\frac{1}{2}}}.
\end{equation}
On the other hand, if $n$ and $\gamma$ have the opposite signs, then we have
\[
|F'(u)|\ge\frac{|\gamma|}{2\pi k(2R)}
\]
and $F''(u)$ has at most one zero in $[R,\min(2R,V)]$.
Therefore, we may dissect $[R,\min(2R,V)]$ into at most two intervals,
on each of which $F'(u)$ is monotonic.
By applying \cref{lem:first_dev},
\begin{equation}
\label{exp_int:second_dev}
\int_{R}^{\min(2R,V)}u^{\alpha+\frac{i\gamma}{k}-1}e\left(n(Q-u)^{\frac{1}{\ell}}\right)du
\ll
R^{\alpha-1}\left(\frac{|\gamma|}{R}\right)^{-1}
=
\frac{R^{\alpha}}{|\gamma|}
\ll
\frac{R^{\alpha}}{|\gamma|^{\frac{1}{2}}}
\end{equation}
since $|\gamma|\ge1$.
Therefore, by \cref{exp_int:first_dev} and \cref{exp_int:second_dev}, we have
\[
\int_{R}^{\min(2R,V)}u^{\alpha+\frac{i\gamma}{k}-1}e\left(n(Q-u)^{\frac{1}{\ell}}\right)du
\ll
\frac{R^{\alpha}}{|\gamma|^{\frac{1}{2}}}
\]
in any case. On inserting this estimate into \cref{exp_int:dyadic},
we obtain the first two estimates.

For the last estimate, we work without the dyadic dissection.
We apply \cref{lem:first_dev} to the integral \cref{exp_int:first}.
By assuming $|n|>\ell Q^{1-\frac{1}{\ell}}|\gamma|$,
\[
|F'(u)|
\ge
\frac{|n|}{\ell Q^{1-\frac{1}{\ell}}}-\frac{|\gamma|}{2\pi k}
\gg
\frac{|n|}{Q^{1-\frac{1}{\ell}}}.
\]
Also, by \cref{exp_int:dev}, we can dissect $[U,V]$
into at most two intervals, on each of which $F'(u)$ is monotonic.
Thus, by \cref{lem:first_dev},
\[
\int_{U}^{V}u^{\alpha+\frac{i\gamma}{k}-1}e\left(n(Q-u)^{\frac{1}{\ell}}\right)du
\ll
U^{\alpha-1}
\left(\frac{|n|}{Q^{1-\frac{1}{\ell}}}\right)^{-1}
\ll
\frac{Q^{1-\frac{1}{\ell}}}{|n|}
\]
since $\alpha\le 1$. This completes the proof.
\end{proof}

We now apply the Poisson summation formula
and detect the cancellation over the sequence $n^{\ell}$.

\begin{lemma}
\label{lem:S_Poisson}
For positive integers $k,\ell$,
real numbers $X,H,\epsilon$ with $4\le H\le X$ and $\epsilon>0$,
and a non-trivial zero $\rho=\beta+i\gamma$ of $\zeta(s)$
with $|\gamma|\le2X$, we have
\begin{equation}
\label{S_expansion}
\begin{aligned}
&S_{\rho}(X+H)-S_{\rho}(X)\\
&=
\GGG{\rho}
\left((X+H)^{\frac{\rho}{k}+\frac{1}{\ell}}-X^{\frac{\rho}{k}+\frac{1}{\ell}}\right)\\
&\quad-\frac{(X+H)^{\frac{\rho}{k}}-X^{\frac{\rho}{k}}}{2\rho}
+O\left(H^{\frac{\beta}{k}}|\gamma|^{\frac{\beta}{k}-\frac{1}{2}}L^2
+\frac{HX^{\frac{1}{k}+\frac{1}{\ell}-1}B^{-2}}{|\gamma|}+L\right)
\end{aligned}
\end{equation}
provided
\begin{equation}
\label{S_Poisson:H_cond}
X^{1-\min(\frac{1}{k},\frac{k}{\ell(k-1)})+\epsilon}\le H\le X^{1-\epsilon},
\end{equation}
where the implicit constant depends on $k,\ell$ and $\epsilon$.
\end{lemma}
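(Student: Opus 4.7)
My plan is to express the difference via the integral representation
\begin{equation*}
S_\rho(X+H) - S_\rho(X) = \frac{1}{k}\int_X^{X+H}T_\rho(u)\,du, \qquad
T_\rho(u) := \sum_{n^\ell\le X}(u - n^\ell)^{\rho/k - 1},
\end{equation*}
which follows by writing $(Q - n^\ell)^{\rho/k}/\rho = (1/k)\int_{n^\ell}^{Q}(u - n^\ell)^{\rho/k - 1}\,du$ and observing that for $u \in [X, X+H]$ the range $n^\ell \le u$ collapses to $n^\ell \le X$. To the inner sum $T_\rho(u)$ I would apply Poisson summation (Euler--Maclaurin form, substituting the Fourier series of the sawtooth for $\{v\}-1/2$) for each fixed $u$, splitting the result into (i) the $m=0$ zeroth Fourier coefficient, (ii) boundary corrections at $n=0$ and $n = N_0:=[X^{1/\ell}]$, and (iii) the non-zero modes $m\ne 0$.

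For (i), the substitution $w = v^\ell$ turns the $m = 0$ coefficient into $\frac{1}{\ell}\int_0^{N_0^\ell}(u-w)^{\rho/k - 1}w^{1/\ell - 1}\,dw$; extending the range to $[0, u]$ (with error controlled using $X - N_0^\ell = O(X^{1-1/\ell})$ together with the hypothesis \cref{S_Poisson:H_cond}) produces the Beta integral $\frac{1}{\ell}u^{\rho/k + 1/\ell - 1}\Gamma(\rho/k)\Gamma(1/\ell)/\Gamma(\rho/k+1/\ell)$. Integrating over $u \in [X,X+H]$ and using $\Gamma(s+1)=s\Gamma(s)$ with $s=\rho/k+1/\ell$ produces the advertised main term $\GGG{\rho}[(X+H)^{\rho/k+1/\ell} - X^{\rho/k+1/\ell}]$. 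For (ii), the $n=0$ Euler--Maclaurin correction $-u^{\rho/k-1}/2$ integrates against $\frac{1}{k}\int_X^{X+H}du$ to give exactly $-[(X+H)^{\rho/k}-X^{\rho/k}]/(2\rho)$, matching the second term of the claim; the $n=N_0$ correction is $\ll H^{\beta/k}/|\gamma|$ and is absorbable.

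For (iii), I would for each $u$ substitute $w = u - v^\ell$ in the Fourier integral and integrate by parts once via $(u-w)^{1/\ell-1}e(-m(u-w)^{1/\ell})\,dw = \frac{\ell}{2\pi im}\,d[e(-m(u-w)^{1/\ell})]$. At $w = u - N_0^\ell$ the endpoint $e(-mN_0)=1$, so the boundary terms become constant in $m$ and cancel in principal value upon summation over $m\ne 0$ (using $\sum_{m\ne 0}e(mx)/(2\pi im) = 1/2 - \{x\}$). The leftover exponential integrals, after exchange of the $u$- and $v$-integrations, fall into the form of \cref{lem:exp_int} with $\alpha = \beta/k$. Splitting the sum over $m$ at the threshold $|m|\asymp Q^{1-1/\ell}|\gamma|$, the high-$|m|$ range uses the rapid $Q^{1-1/\ell}/|m|$ decay (case 3 of \cref{lem:exp_int}) summed against the $1/|m|$ from IBP to give a $1/|\gamma|$ contribution, while the low-$|m|$ range uses the second-derivative bound $Q^{\beta/k}L/|\gamma|^{1/2}$ summed against $1/|m|$ to produce a $Q^{\beta/k}L^2/|\gamma|^{1/2}$ contribution.

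The principal obstacle is that the low-$|m|$ bound $Q^{\beta/k}L^2/|\gamma|^{1/2}\asymp X^{\beta/k}L^2/|\gamma|^{1/2}$ is weaker than the claimed $H^{\beta/k}|\gamma|^{\beta/k-1/2}L^2$ precisely when $|\gamma| < X/H$. To close this gap I would exploit cancellation between the two pieces coming from $Q=X$ and $Q = X+H$: shifting the variable of integration by $H$ aligns the two intervals so that the difference becomes a single integral of $[w^{\rho/k-1} - (w+H)^{\rho/k-1}]$ against a common oscillating kernel, whose amplitude satisfies the mean-value estimate $\ll |\rho|Hw^{\beta/k-2}$ for $w\ge H$ and $\ll w^{\beta/k-1}$ for $w<H$; applying the second-derivative estimate of \cref{lem:exp_int} to this combined integrand and carefully balancing the two regimes yields the claimed $H^{\beta/k}|\gamma|^{\beta/k-1/2}L^2$ bound. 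The remaining error terms $HX^{1/k+1/\ell-1}B^{-2}/|\gamma|$ and $L$ account respectively for the $m=0$ extension error and for low-frequency / truncation corrections to the Poisson series.
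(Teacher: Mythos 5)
Your overall route is the same as the paper's: partial summation/Poisson in the variable $n$, the Beta-function identity for the zeroth mode giving $\GGG{\rho}\bigl((X+H)^{\frac{\rho}{k}+\frac{1}{\ell}}-X^{\frac{\rho}{k}+\frac{1}{\ell}}\bigr)$, the boundary correction producing $-\bigl((X+H)^{\frac{\rho}{k}}-X^{\frac{\rho}{k}}\bigr)/2\rho$, \cref{lem:exp_int} for the non-zero modes with the cut at $|m|\asymp\ell Q^{1-\frac{1}{\ell}}|\gamma|$, and you correctly identify that the whole difficulty is beating $X^{\beta/k}|\gamma|^{-1/2}$ when $|\gamma|<X/H$ by exploiting the cancellation between the $Q=X+H$ and $Q=X$ pieces. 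However, your resolution of that difficulty does not give the stated error term. Splitting the aligned difference at $w=H$ and using the mean-value bound $|w^{\rho/k-1}-(w+H)^{\rho/k-1}|\ll|\gamma|Hw^{\beta/k-2}$ for $w\ge H$, the second-derivative estimate on a dyadic block $[R,2R]$ yields $\ll|\gamma|HR^{\frac{\beta}{k}-2}\cdot R|\gamma|^{-\frac{1}{2}}=|\gamma|^{\frac{1}{2}}HR^{\frac{\beta}{k}-1}$, which is decreasing in $R$ and is therefore maximal at the crossover $R\asymp H$, where it equals $H^{\frac{\beta}{k}}|\gamma|^{\frac{1}{2}}$. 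This exceeds the claimed $H^{\frac{\beta}{k}}|\gamma|^{\frac{\beta}{k}-\frac{1}{2}}$ by the factor $|\gamma|^{1-\frac{\beta}{k}}$, and that loss is not cosmetic: it propagates into the sum $R_3$ in the main argument and degrades the final exponents. The correct crossover is $w\asymp H|\gamma|$ (the paper takes $U=\min(4H|\gamma|,X)$): below it one estimates the two integrals separately, getting $\ll U^{\frac{\beta}{k}}|\gamma|^{-\frac{1}{2}}\ll H^{\frac{\beta}{k}}|\gamma|^{\frac{\beta}{k}-\frac{1}{2}}$, and above it the difference term also balances to the same quantity.

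There is a second, related problem: you cannot feed $w^{\rho/k-1}-(w+H)^{\rho/k-1}$ into \cref{lem:first_dev} or \cref{lem:second_dev} as an ``amplitude'' controlled only by its supremum, because those lemmas require the total variation $\|g\|$ of a real-valued $g$, and the difference contains the oscillation $w^{i\gamma/k}$, whose variation on a dyadic block is about $|\gamma|$ times its sup. The device that fixes both issues simultaneously is to expand
\[
(u+H)^{\frac{\rho}{k}-1}-u^{\frac{\rho}{k}-1}
=u^{\frac{\rho}{k}-1}\sum_{\nu=1}^{\infty}\binom{\frac{\rho}{k}-1}{\nu}\Bigl(\frac{H}{u}\Bigr)^{\nu},
\]
so that each term is $u^{\frac{\beta}{k}-\nu-1}\cdot u^{\frac{i\gamma}{k}}e(\cdots)$, exactly the shape handled by \cref{lem:exp_int} with $\alpha=\frac{\beta}{k}-\nu\le0$; the resulting series $\sum_{\nu}\bigl|\binom{\rho/k-1}{\nu}\bigr|(H/U)^{\nu}$ converges (uniformly in $\rho$) precisely because $H/U\le 1/(4|\gamma|)$, which is what forces the crossover to sit at $H|\gamma|$ rather than at $H$. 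With these two corrections your outline matches the paper's proof.
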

\begin{proof}
By partial summation, for $X\le Q\le X+H$, we have
\begin{equation}
\label{S_Poisson:sum_parts}
\begin{aligned}
S_{\rho}(Q)
&=
\frac{1}{\rho}\int_0^{X}(Q-u)^{\frac{\rho}{k}}d[u^{\frac{1}{\ell}}]\\
&=
\frac{1}{\ell\rho}\int_0^{X}(Q-u)^{\frac{\rho}{k}}u^{\frac{1}{\ell}-1}du
-
\frac{1}{\rho}\int_0^{X}(Q-u)^{\frac{\rho}{k}}d\left(\{u^{\frac{1}{\ell}}\}-\frac{1}{2}\right).
\end{aligned}
\end{equation}
The first integral on the right-hand side of \cref{S_Poisson:sum_parts} is
\begin{align}
\frac{1}{\ell\rho}\int_0^{X}(Q-u)^{\frac{\rho}{k}}u^{\frac{1}{\ell}-1}du
&=
\frac{1}{\ell\rho}\int_0^{Q}(Q-u)^{\frac{\rho}{k}}u^{\frac{1}{\ell}-1}du
+O\left(\frac{H^{1+\frac{1}{k}}X^{\frac{1}{\ell}-1}}{|\gamma|}\right)\\
&=
\GGG{\rho}Q^{\frac{\rho}{k}+\frac{1}{\ell}}
+O\left(\frac{HX^{\frac{1}{k}+\frac{1}{\ell}-1}B^{-2}}{|\gamma|}\right)
\end{align}
provided \cref{S_Poisson:H_cond}.
The second integral on the right-hand side of \cref{S_Poisson:sum_parts} is
\begin{align}
&-\frac{1}{\rho}\int_0^{X}(Q-u)^{\frac{\rho}{k}}d\left(\{u^{\frac{1}{\ell}}\}-\frac{1}{2}\right)\\
&=
-\frac{1}{k}\int_0^{X}(Q-u)^{\frac{\rho}{k}-1}\left(\{u^{\frac{1}{\ell}}\}-\frac{1}{2}\right)du
-\frac{Q^{\frac{\rho}{k}}}{2\rho}
+O\left(\frac{H^{\frac{1}{k}}}{|\gamma|}\right)\\
&=
-\frac{1}{k}\int_0^{X}(Q-u)^{\frac{\rho}{k}-1}\left(\{u^{\frac{1}{\ell}}\}-\frac{1}{2}\right)du
-\frac{Q^{\frac{\rho}{k}}}{2\rho}
+O\left(\frac{HX^{\frac{1}{k}+\frac{1}{\ell}-1}B^{-2}}{|\gamma|}\right)
\end{align}
provided \cref{S_Poisson:H_cond}.
Recall the Fourier expansion
\[
\{u\}-\frac{1}{2}
=
-\sum_{n\neq0}\frac{e(nu)}{2\pi in},
\]
which holds for $u\not\in\mathbb{Z}$ and converges boundedly for $u\in\mathbb{R}$.
Then since
\begin{align}
&-\frac{1}{k}\int_0^{X}(Q-u)^{\frac{\rho}{k}-1}\left(\{u^{\frac{1}{\ell}}\}-\frac{1}{2}\right)du\\
&=
-\frac{1}{k}\int_0^{X-1}(Q-u)^{\frac{\rho}{k}-1}\left(\{u^{\frac{1}{\ell}}\}-\frac{1}{2}\right)du
+O\left(\frac{1}{k}\int_{X-1}^{X}(Q-u)^{\frac{\beta}{k}-1}du\right)\\
&=
-\frac{1}{k}\int_0^{X-1}(Q-u)^{\frac{\rho}{k}-1}\left(\{u^{\frac{1}{\ell}}\}-\frac{1}{2}\right)du
+O\left(\frac{1}{\beta}\right),
\end{align}
by using \cref{lem:KV} and the assumption $|\gamma|\le X$, we have
\begin{equation}
\label{S_Poisson:pre_S_expansion}
S_{\rho}(Q)
=
\GGG{\rho}
Q^{\frac{\rho}{k}+\frac{1}{\ell}}
-\frac{Q^{\frac{\rho}{k}}}{2\rho}
+R_\rho(Q)+O\left(\frac{HX^{\frac{1}{k}+\frac{1}{\ell}-1}B^{-2}}{|\gamma|}+L\right)
\end{equation}
for $X\le Q\le X+H$, where
\begin{gather}
R_\rho(Q)=R_{\rho,k,\ell}(Q)=\sum_{n\neq0}\frac{I_\rho(Q,n)}{2\pi ikn},\\
I_\rho(Q,n)=I_{\rho,k,\ell}(Q,n)
=\int_{0}^{X-1}(Q-u)^{\frac{\rho}{k}-1}e(nu^\frac{1}{\ell})du.
\end{gather}
In order to prove the lemma, it suffices to estimate
\[
R_{\rho}(X+H)-R_{\rho}(X).
\]
We first estimate the difference of oscillating integrals
\begin{equation}
\label{S_expansion:I_diff}
I_{\rho}(X+H,n)-I_{\rho}(X,n).
\end{equation}
By changing the variable in the definiton of $I_{\rho}(Q,n)$,
we obtain expressions
\begin{align}
I_{\rho}(X+H,n)
&=\int_1^{X}(u+H)^{\frac{\rho}{k}-1}e\left(n(X-u)^{\frac{1}{\ell}}\right)du,\\
I_{\rho}(X,n)
&=\int_1^{X}u^{\frac{\rho}{k}-1}e\left(n(X-u)^{\frac{1}{\ell}}\right)du.
\end{align}
Let $U=\min(4H|\gamma|,X)$.
Then we decompose \cref{S_expansion:I_diff} as
\begin{equation}
\label{S_Poisson:I_decomp}
I_{\rho}(X+H,n)-I_{\rho}(X,n)
=
I+I_1-I_2,
\end{equation}
where
\[
I
=
\int_{U}^{X}\left((u+H)^{\frac{\rho}{k}-1}-u^{\frac{\rho}{k}-1}\right)
e\left(n(X-u)^{\frac{1}{\ell}}\right)du,
\]
\[
I_1=\int_{1}^{U}(u+H)^{\frac{\rho}{k}-1}e\left(n(X-u)^{\frac{1}{\ell}}\right)du,\quad
I_2=\int_{1}^{U}u^{\frac{\rho}{k}-1}e\left(n(X-u)^{\frac{1}{\ell}}\right)du.
\]
For the integral $I$, we use the Taylor expansion
\[
(u+H)^{\frac{\rho}{k}-1}-u^{\frac{\rho}{k}-1}
=
u^{\frac{\rho}{k}-1}
\sum_{\nu=1}^{\infty}\binom{\frac{\rho}{k}-1}{\nu}\left(\frac{H}{u}\right)^{\nu}.
\]
By substituting this expansion into the definition of $I$,
\begin{equation}
\label{S_Poisson:I_expansion}
I
=
\sum_{\nu=1}^{\infty}
\binom{\frac{\rho}{k}-1}{\nu}
H^{\nu}\int_{U}^{X}u^{\frac{\rho}{k}-\nu-1}
e\left(n(X-u)^{\frac{1}{\ell}}\right)du.
\end{equation}
By using \cref{lem:exp_int} and the definition of $U$, if $4H|\gamma|\le X$,
\begin{equation}
\label{S_Poisson:I}
I
\ll
\frac{U^{\frac{\beta}{k}}L}{|\gamma|^{\frac{1}{2}}}
\sum_{\nu=1}^{\infty}
\left|\binom{\frac{\rho}{k}-1}{\nu}\right|
\left(\frac{H}{U}\right)^{\nu}
\ll
\frac{U^{\frac{\beta}{k}}L}{|\gamma|^{\frac{1}{2}}}
\sum_{\nu=1}^{\infty}
\prod_{\mu=1}^{\nu}\left(\frac{|\gamma|+2\mu}{4\mu|\gamma|}\right)
\ll
\frac{U^{\frac{\beta}{k}}L}{|\gamma|^{\frac{1}{2}}}
\end{equation}
since $|\gamma|\ge2$.
If $4H|\gamma|>X$, then $I$ is an empty integral,
so the same estimate holds trivially.
For the integrals $I_1$ and $I_2$, we may use \cref{lem:exp_int} directly to obtain
\begin{equation}
\label{S_Poisson:I12}
I_1,I_2
\ll
\frac{U^{\frac{\beta}{k}}L}{|\gamma|^{\frac{1}{2}}}
\end{equation}
since we can choose $Q=X+H$ for the integral
\begin{align}
I_1
&=
\int_{1}^{U}(u+H)^{\frac{\beta}{k}+\frac{i\gamma}{k}-1}e\left(n(X-u)^{\frac{1}{\ell}}\right)du\\
&=
\int_{1+H}^{U+H}u^{\frac{\beta}{k}+\frac{i\gamma}{k}-1}e\left(n(X+H-u)^{\frac{1}{\ell}}\right)du.
\end{align}
Therefore, we have
\begin{equation}
\label{S_Poisson:n_small}
I_{\rho}(X+H,n)-I_{\rho}(X,n)
\ll
\frac{U^{\frac{\beta}{k}}L}{|\gamma|^{\frac{1}{2}}}
\ll
H^{\frac{\beta}{k}}|\gamma|^{\frac{\beta}{k}-\frac{1}{2}}L.
\end{equation}
On the other hand, if $|n|>\ell (X+H)^{1-\frac{1}{\ell}}|\gamma|$,
\cref{lem:exp_int} gives
\[
I_{\rho}(X+H,n)-I_{\rho}(X,n)
\ll
\frac{X^{1-\frac{1}{\ell}}}{|n|}.
\]
Thus we have
\begin{equation}
\begin{aligned}
&R_{\rho}(X+H)-R_{\rho}(X)\\
&\ll
H^{\frac{\beta}{k}}|\gamma|^{\frac{\beta}{k}-\frac{1}{2}}L
\sum_{n\le\ell (X+H)^{1-\frac{1}{\ell}}|\gamma|}\frac{1}{n}
+
X^{1-\frac{1}{\ell}}\sum_{n>\ell (X+H)^{1-\frac{1}{\ell}}|\gamma|}\frac{1}{n^2}\\
&\ll
H^{\frac{\beta}{k}}|\gamma|^{\frac{\beta}{k}-\frac{1}{2}}L^2
+
1.
\end{aligned}
\end{equation}
This completes the proof.
\end{proof}

\section{Completion of the proof}
\label{section:proof}
In this section, we complete the proof of main theorems.
However, before the main part of the proof of \cref{thm:main_kl},
we check the direct consequence of \cref{lem:Huxley_PNT}.

\begin{lemma}
\label{lem:trivial}
For positive integers $k,\ell$ with $\ell\ge2$
and real numbers $X,H,\epsilon$
with $4\le H\le X$ and $\epsilon>0$,
we have the asymptotic formula \cref{LZ_asymp_kl}
provided
\begin{equation}
\label{trivial:H_cond}
X^{1-\theta_{B}(k,\ell)+\epsilon}\le H\le X^{1-\epsilon},
\end{equation}
where $\theta_{B}(k,\ell)$ is defined by
\[
\theta_{B}(k,\ell)
=
\min\left(\frac{5}{12k},\frac{k}{\ell(k-1)}\right)
\]
as in \cref{thm:main_kl} and the implicit constant depends on $k,\ell$ and $\epsilon$.
\end{lemma}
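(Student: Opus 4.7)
The plan is to derive \cref{lem:trivial} by inserting the Huxley--Ingham short-interval prime number theorem (\cref{lem:Huxley_PNT}) termwise into the reduction provided by \cref{lem:red}. First, observe that $\theta_B(k,\ell)\le\min(\frac{1}{k},\frac{k}{\ell(k-1)})$ since $\frac{5}{12k}\le\frac{1}{k}$, so the hypothesis $H\ge X^{1-\theta_B+\epsilon}$ entails the range assumption of \cref{lem:red}, giving
\[
\sum_{X<N\le X+H}R(N)=\sum_{n^\ell\le X}\bigl(\psi((X+H-n^\ell)^{\frac{1}{k}})-\psi((X-n^\ell)^{\frac{1}{k}})\bigr)+O(HX^{\frac{1}{k}+\frac{1}{\ell}-1}B^{-1}).
\]

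Next, I would split the outer sum according to the size of $W:=X-n^\ell$, with the threshold $W_0=H$. For $W\le W_0$, each $\psi$-difference is bounded trivially by $\psi((W+H)^{\frac{1}{k}})\ll H^{\frac{1}{k}}$, and \cref{lem:power_short} bounds the number of such $n$ by $HX^{\frac{1}{\ell}-1}+1$, giving total contribution $\ll H^{1+\frac{1}{k}}X^{\frac{1}{\ell}-1}+H^{\frac{1}{k}}$. The first term is admissible since $H\le X^{1-\epsilon}$, and the second since $\theta_B\le\frac{k}{\ell(k-1)}$: a direct computation shows $H^{\frac{1}{k}}\ll HX^{\frac{1}{k}+\frac{1}{\ell}-1}B^{-1}$ whenever $H\ge X^{1-k/(\ell(k-1))+\epsilon}$.

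For $W>W_0$, I would apply \cref{lem:Huxley_PNT} with base $W^{\frac{1}{k}}$ and length $(W+H)^{\frac{1}{k}}-W^{\frac{1}{k}}$. The constraint length~$\le$~base follows from $W\ge H$, and the Huxley exponent condition $(W^{\frac{1}{k}})^{\frac{7}{12}+\epsilon'}\le(W+H)^{\frac{1}{k}}-W^{\frac{1}{k}}$ reduces by the mean value theorem to $H\gg W^{1-\frac{5}{12k}+\frac{\epsilon'}{k}}$, which holds for a suitably small $\epsilon'>0$ because $W\le X$ and $\theta_B\le\frac{5}{12k}$. Consequently each term equals $((W+H)^{\frac{1}{k}}-W^{\frac{1}{k}})(1+O(B^{-1}))$; summing over $W>W_0$ and adding back the $W\le W_0$ part of $S(X+H)-S(X)$ (whose size is dominated by the trivial small-$W$ estimate) yields $S(X+H)-S(X)+O(B^{-1}(S(X+H)-S(X)))$, and \cref{lem:main_term} then produces the claimed main term $\GG{1}HX^{\frac{1}{k}+\frac{1}{\ell}-1}$ with error of the asserted size.

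The main obstacle is the bookkeeping at the threshold $W_0$: the truncation losses must fit inside $HX^{\frac{1}{k}+\frac{1}{\ell}-1}B^{-1}$, while Huxley's theorem must apply uniformly above $W_0$. This is precisely what forces the two-term structure $\theta_B(k,\ell)=\min(\frac{5}{12k},\frac{k}{\ell(k-1)})$: the first exponent arises from propagating the $\frac{7}{12}$ of Huxley--Ingham through the $k$-th root change of variable, while the second balances the trivial small-$W$ error $H^{\frac{1}{k}}$ against the size of the main term.
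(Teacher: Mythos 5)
Your proposal is correct and follows essentially the same route as the paper: reduce via \cref{lem:red}, apply \cref{lem:Huxley_PNT} termwise to $\psi((X+H-n^\ell)^{1/k})-\psi((X-n^\ell)^{1/k})$ after verifying the length condition $H\gg W^{1-\frac{5}{12k}+\frac{\epsilon'}{k}}$ with $W=X-n^\ell$, and conclude with \cref{lem:main_term}; the two sources of $\theta_B$ are identified exactly as in the paper. The only (immaterial) difference is at the threshold $W\asymp H$, where the paper applies the ordinary prime number theorem to keep the asymptotic for those terms, while you discard them with a trivial bound and separately absorb the corresponding piece of $S(X+H)-S(X)$ into the error.
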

\begin{proof}
We may assume that $X$ is larger than some constant
depends only on $k,\ell$ and $\epsilon$ since otherwise
the assertion trivially holds.
We use \cref{lem:Huxley_PNT} in \cref{lem:red}.
If $n^{\ell}\le X$ and $(X+H-n^{\ell})\le2(X-n^{\ell})$,
\begin{align}
(X+H-n^{\ell})^{\frac{1}{k}}-(X-n^{\ell})^{\frac{1}{k}}
&=
\frac{1}{k}\int_{X-n^{\ell}}^{X+H-n^{\ell}}u^{\frac{1}{k}-1}du
\ge
\frac{1}{2k}H(X-n^{\ell})^{\frac{1}{k}-1}\\
&\ge
\frac{1}{2k}X^{1-\frac{5}{12k}+\epsilon}(X-n^{\ell})^{\frac{1}{k}-1}
\ge
\left((X-n^{\ell})^{\frac{1}{k}}\right)^{\frac{7}{12}+\frac{\epsilon}{2}}
\end{align}
provided \cref{trivial:H_cond} and $X$ is large. Thus, in this case, \cref{lem:Huxley_PNT} gives
\begin{equation}
\label{trivial:after_Huxley_PNT}
\begin{aligned}
&\psi\left((X+H-n^{\ell})^{\frac{1}{k}}\right)
-
\psi\left((X-n^{\ell})^{\frac{1}{k}}\right)\\
&=
(X+H-n^{\ell})^{\frac{1}{k}}-(X-n^{\ell})^{\frac{1}{k}}
+
O(((X+H-n^{\ell})^{\frac{1}{k}}-(X-n^{\ell})^{\frac{1}{k}})B^{-1})
\end{aligned}
\end{equation}
by making the constant $c$ smaller since
\[
(X-n^{\ell})^{\frac{1}{k}}\gg(X+H-n^{\ell})^{\frac{1}{k}}\gg H^{\frac{1}{k}}
\]
in the current case.
If $n^{\ell}\le X$ and $(X+H-n^{\ell})>2(X-n^{\ell})$,
then we may apply the usual prime number theorem
to obtain the same estimate \cref{trivial:after_Huxley_PNT}
since in this case
\[
(X+H-n^{\ell})^{\frac{1}{k}}-(X-n^{\ell})^{\frac{1}{k}}
\asymp
(X+H-n^{\ell})^{\frac{1}{k}}.
\]
By using \cref{trivial:after_Huxley_PNT}
in \cref{lem:red} and using \cref{lem:main_term},
we arrive at the lemma.
\end{proof}

We now prove the main part of \cref{thm:main_kl}.

\begin{lemma}
\label{lem:lemma_kl}
For positive integers $k,\ell$ with $\ell\ge2$
and real numbers $X,H,\epsilon$ with $4\le H\le X$ and $\epsilon>0$,
we have the asymptotic formula \cref{LZ_asymp_kl}
provided
\begin{equation}
\label{lemma_kl:H_cond}
X^{1-\theta_{C}(k,\ell)+\epsilon}\le H\le X^{1-\epsilon},
\end{equation}
where $\theta_{C}(k,\ell)$ is defined by
\[
\theta_{C}(k,\ell)
=
\left\{
\begin{array}{ll}
\displaystyle
\min\left(\frac{\lambda_1(\ell)}{k},\frac{\lambda_2(k,\ell)}{k},\frac{2}{\ell}\right)
&(\text{if $k=1$}),\\[4mm]
\displaystyle
\min\left(\frac{\lambda_1(\ell)}{k},\frac{\lambda_2(k,\ell)}{k},\frac{k}{\ell(k-1)}\right)
&(\text{if $k\ge2$}),\\
\end{array}
\right.
\]
and the implicit constant depends on $k,\ell$ and $\epsilon$.
\end{lemma}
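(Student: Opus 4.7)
The plan is to carry out the classical prime-number-theorem-in-short-intervals argument, applied to the inner sum over $m^k$ and then averaged over $n^\ell \le X$. I would first invoke \cref{lem:red} to reduce the left side of \cref{LZ_asymp_kl} to
\[
\sum_{n^\ell\le X}\Bigl(\psi\bigl((X+H-n^\ell)^{1/k}\bigr)-\psi\bigl((X-n^\ell)^{1/k}\bigr)\Bigr)
\]
modulo an error $O(HX^{1/k+1/\ell-1}B^{-1})$; under \cref{lemma_kl:H_cond} the hypothesis \cref{red:H_cond} holds (trivially for $k=1$, and via the $k/(\ell(k-1))$ clause of $\theta_C$ when $k\ge 2$). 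Next, I apply \cref{lem:S_explicit_pre} at $Q=X+H$ and $Q=X$ separately and subtract, producing
\[
(S(X+H)-S(X))-\sum_{|\gamma|\le T}\bigl(S_\rho(X+H)-S_\rho(X)\bigr)+O(X^{1/k+1/\ell}T^{-1}L^2),
\]
with the truncation $T\asymp XB/H$ chosen so the explicit-formula error is admissible. The deterministic piece $S(X+H)-S(X)$ is handled directly by \cref{lem:main_term}, which yields precisely $\GG{1}HX^{1/k+1/\ell-1}$ together with admissible remainders.

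The heart of the proof is to show $\sum_{|\gamma|\le T}|S_\rho(X+H)-S_\rho(X)|\ll HX^{1/k+1/\ell-1}B^{-1}$. Applying \cref{lem:S_Poisson}, each summand splits into three contributions: (a) the Beta-type main piece, of size $\ll HX^{\beta/k+1/\ell-1}/|\gamma|^{1/\ell}$ after the mean value theorem applied to $(X+H)^{\rho/k+1/\ell}-X^{\rho/k+1/\ell}$; (b) the polar piece $-((X+H)^{\rho/k}-X^{\rho/k})/(2\rho)$, of size $\min(HX^{\beta/k-1}/k,\,X^{\beta/k}/|\rho|)$; and (c) the residual errors $H^{\beta/k}|\gamma|^{\beta/k-1/2}L^2+HX^{1/k+1/\ell-1}B^{-2}/|\gamma|+L$. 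For each piece I would dyadically decompose over $|\gamma|\asymp K\le T$ and invoke \cref{lem:zero_sum} with $Y=X^{1/k}$, $\lambda=k\log K/\log X$, so that $\sum X^{\beta/k}\ll X^{\phi(\lambda)/k}L^A$.

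The Beta-piece estimate reduces to the inequality $\phi(\lambda)-\lambda/\ell\le1-\delta$, which \cref{lem:phi_solve_shift} translates to $\lambda\le\lambda_1(\ell)-\epsilon$; coupling this with $T\gtrsim XB/H$ forces $H\ge X^{1-\lambda_1(\ell)/k+\epsilon}$. The polar piece is split at $|\gamma|\asymp X/H$ into a small-$\gamma$ regime (where the mean-value bound $HX^{\beta/k-1}/k$ dominates) and a large-$\gamma$ regime (where the trivial bound $X^{\beta/k}/|\rho|$ dominates); in both cases the binding inequality becomes $\phi(\lambda)+\lambda/2\le1+k/\ell-\delta$, so \cref{lem:phi_solve_shift} gives $\lambda\le\lambda_2(k,\ell)-\epsilon$ and thus $H\ge X^{1-\lambda_2(k,\ell)/k+\epsilon}$. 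For the residual terms the $HX^{1/k+1/\ell-1}B^{-2}/|\gamma|$ and $L$ summands sum trivially within budget, whereas the $H^{\beta/k}|\gamma|^{\beta/k-1/2}L^2$ term is estimated via the grouping $K^{-1/2}(HK)^\beta$ combined with the trivial count $N(\alpha,T)\ll T^{1+\epsilon}$; for $k=1$ this produces the additional $2/\ell$ constraint, while for $k\ge2$ it is already dominated by the $k/(\ell(k-1))$ clause.

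The main obstacle will be the two-regime splitting of the polar piece at $|\gamma|\asymp X/H$ together with the simultaneous bookkeeping of the $\lambda_1(\ell)$ and $\lambda_2(k,\ell)$ thresholds of \cref{lem:phi_solve_shift}; a secondary subtlety is that for $k=1$ the extra $2/\ell$ requirement does not arise from the $\phi$-analysis and must be extracted separately from the secondary error term in \cref{lem:S_Poisson}, which is precisely why $\theta_C(1,\ell)$ carries the additional $2/\ell$ ingredient whereas $\theta_C(k,\ell)$ for $k\ge2$ inherits only $k/(\ell(k-1))$.
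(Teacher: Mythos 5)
Your overall architecture (reduce via \cref{lem:red}, insert \cref{lem:S_explicit_pre} and \cref{lem:S_Poisson}, evaluate the deterministic piece with \cref{lem:main_term}, and control the zero sums with \cref{lem:zero_sum} and \cref{lem:phi_solve_shift}) matches the paper. But there is a genuine gap in how you propose to handle the residual term $R_3=\sum_{|\gamma|\le T}H^{\beta/k}|\gamma|^{\beta/k-\frac12}$. You claim this term can be disposed of by ``the grouping $K^{-1/2}(HK)^{\beta/k}$ combined with the trivial count $N(\alpha,T)\ll T^{1+\epsilon}$.'' With the trivial count and $T\asymp X/H$ this gives roughly $T^{1/2}(HT)^{1/k}\asymp (X/H)^{1/2}X^{1/k}$, and the requirement that this be $\ll HX^{\frac1k+\frac1\ell-1}$ forces $H\gg X^{1-\frac{2}{3\ell}}$ --- e.g.\ $H\gg X^{2/3}$ for $(k,\ell)=(1,2)$, nowhere near the claimed $X^{1-\lambda_2(1,2)}=X^{0.337\cdots}$. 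In the paper this term is precisely the source of the $\lambda_2(k,\ell)$ constraint: one must apply the Huxley--Ingham density estimate (via \cref{lem:zero_sum} with $Y=(HK)^{1/k}$ and $\lambda=k\log K/\log(HK)$), and the resulting binding inequality $\phi(\lambda)+\frac12\lambda\le1+\frac{k}{\ell}$ is exactly the defining equation of $\lambda_2$ in \cref{lem:phi_solve}. Moreover, the $\frac{2}{\ell}$ clause for $k=1$ also comes from $R_3$, but from the zero-free-region branch $Y^{1-\eta+2\eta\lambda}$ of \cref{lem:zero_sum} (yielding $H^{1/2}X^{1/2+o(1)}$), not from a trivial count.

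Correspondingly, your attribution of the $\lambda_2$ threshold to the polar piece $R_2=\sum((X+H)^{\rho/k}-X^{\rho/k})/(2\rho)$ is off. The paper handles $R_2$ without any splitting at $|\gamma|\asymp X/H$: the mean-value bound $|\gamma|HX^{\beta/k-1}$ together with $|\gamma|\le T\le X$ gives $HX^{-1}X^{\beta/k}\le HX^{\frac1\ell-1}X^{\beta/k}|\gamma|^{-\frac1\ell}$, so $R_2$ reduces to the \emph{same} dyadic sum as $R_1$ and only requires the $\lambda_1(\ell)$ constraint $\phi(\Delta)-\frac1\ell\Delta\le1$. Your two-regime analysis of $R_2$ would at best yield the weaker condition $\phi(\Delta)\le1+\frac{k}{\ell}$, not $\phi(\Delta)+\frac12\Delta\le1+\frac{k}{\ell}$, so even granting it, the $\lambda_2$ ingredient of $\theta_C$ is never actually established anywhere in your argument. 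To repair the proof, redirect the density-estimate machinery with $Y=(HK)^{1/k}$ to $R_3$ and treat $R_2$ by folding it into $R_1$ as above.
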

\begin{proof}
We may assume that $X$ is larger than some constant
depends only on $k,\ell$ and $\epsilon$ since otherwise
the assertion trivially holds.
By \cref{lem:red}, \cref{lem:S_explicit_pre} and \cref{lem:S_Poisson},
\begin{equation}
\label{main:decomp}
\begin{aligned}
&\sum_{X<N\le X+H}R(N)\\
&=
M+R_1+R_2
+O((R_3+X^{\frac{1}{k}+\frac{1}{\ell}}T^{-1}+T)L^2+HX^{\frac{1}{k}+\frac{1}{\ell}-1}B^{-1})
\end{aligned}
\end{equation}
provided
\begin{equation}
\label{HT_cond}
X^{1-\min(\frac{1}{k},\frac{k}{\ell(k-1)})+\epsilon}\le H\le X^{1-\epsilon},\quad
2\le T\le X^{\frac{1}{k}},
\end{equation}
where
\begin{align}
M
&=
S(X+H)-S(X),\\
R_1
&=
-
\sum_{\substack{\rho\\|\gamma|\le T}}\GGG{\rho}
\left((X+H)^{\frac{\rho}{k}+\frac{1}{\ell}}-X^{\frac{\rho}{k}+\frac{1}{\ell}}\right),\\
R_2
&=
\sum_{\substack{\rho\\|\gamma|\le T}}\frac{(X+H)^{\frac{\rho}{k}}-X^{\frac{\rho}{k}}}{2\rho},\quad
R_3
=\sum_{|\gamma|\le T}H^{\frac{\beta}{k}}|\gamma|^{\frac{\beta}{k}-\frac{1}{2}}.
\end{align}
In order to control the size of the error $X^{\frac{1}{k}+\frac{1}{\ell}}T^{-1}L^2$,
we choose $T$ by
\begin{equation}
\label{main:T}
T=X^{1+\frac{\epsilon_1}{k}}H^{-1},\quad
0<\epsilon_1\le\frac{\epsilon}{2},
\end{equation}
where we choose $\epsilon_1$ later (our choice will be $\epsilon_1=\frac{\epsilon}{80}$).
This choice is admissible since the former inequality of \cref{HT_cond} implies
\begin{equation}
\label{main:T_range}
X^{\epsilon}\le T\le X^{\frac{1}{k}-\frac{\epsilon}{2}}.
\end{equation}
If we assume further
\begin{equation}
\label{HT_cond2}
X^{1-\frac{1}{2}(\frac{1}{k}+\frac{1}{\ell})+\epsilon}\le H,
\end{equation}
then
\[
TL^2
= X^{1+\frac{\epsilon_1}{k}}H^{-1}L^2
= HX^{1+\frac{\epsilon_1}{k}}H^{-2}L^2
\le HX^{\frac{1}{k}+\frac{1}{\ell}-1-\epsilon}L^2
\ll HX^{\frac{1}{k}+\frac{1}{\ell}-1}B^{-1}.
\]
Thus,
\begin{equation}
\label{main:error}
(X^{\frac{1}{k}+\frac{1}{\ell}}T^{-1}+T)L^2
\ll
HX^{\frac{1}{k}+\frac{1}{\ell}-1}B^{-1}
\end{equation}
provided \cref{HT_cond2}. By \cref{lem:main_term}, the main term $M$ can be evaluated as
\begin{equation}
\label{main:main}
\label{M}
M
=
\GG{1} HX^{\frac{1}{k}+\frac{1}{\ell}-1}+O\left(HX^{\frac{1}{k}+\frac{1}{\ell}-1}B^{-1}\right)
\end{equation}
provided \cref{HT_cond}. The remaining task is to estimate $R_1,R_2$ and $R_3$.

We first estimate the sum $R_1$. By the fundamental theorem of calculus,
\begin{equation}
\label{main:R1_int}
(X+H)^{\frac{\rho}{k}+\frac{1}{\ell}}-X^{\frac{\rho}{k}+\frac{1}{\ell}}
=
\left(\frac{\rho}{k}+\frac{1}{\ell}\right)\int_{X}^{X+H}u^{\frac{\rho}{k}+\frac{1}{\ell}-1}du
\ll
|\gamma|HX^{\frac{\beta}{k}+\frac{1}{\ell}-1}.
\end{equation}
Then, by using Stirling's formula and dissecting dyadically,
\begin{equation}
\label{main:R1_dyadic}
R_{1}
\ll
HX^{\frac{1}{\ell}-1}\sum_{|\gamma|\le T}\frac{X^{\frac{\beta}{k}}}{|\gamma|^{\frac{1}{\ell}}}
\ll
HX^{\frac{1}{\ell}-1}L\sup_{1\le K\le T}
K^{-\frac{1}{\ell}}\sum_{K<|\gamma|\le 2K}X^{\frac{\beta}{k}}.
\end{equation}
For $1\le K\le T$, we write $K=X^{\frac{\delta}{k}}$.
Further, we write
\begin{equation}
\label{main:Delta}
XH^{-1}=X^{\frac{\Delta}{k}}.
\end{equation}
Then, by \cref{main:T}, $\delta$ moves in the range
\begin{equation}
\label{main:delta_range}
0\le\delta\le\Delta+\epsilon_1.
\end{equation}
By \cref{lem:zero_sum},
\begin{equation}
\label{main:R1_zero_sum}
K^{-\frac{1}{\ell}}\sum_{K<|\gamma|\le 2K}X^{\frac{\beta}{k}}
\ll
\left(X^{\frac{1}{k}(\phi(\delta)-\frac{1}{\ell}\delta)}
+X^{\frac{1}{k}(1-\eta+(2\eta-\frac{1}{\ell})\delta)}\right)L^A.
\end{equation}
By \cref{lem:phi_dev} and the assumption $\ell\ge2$, for sufficiently large $X$,
\[
\frac{d}{d\delta}\left(\phi(\delta)-\frac{1}{\ell}\delta\right)>0,\quad
2\eta-\frac{1}{\ell}<0.
\]
Therefore, by \cref{main:R1_dyadic}, \cref{main:delta_range} and \cref{main:R1_zero_sum},
\begin{equation}
\label{main:R1_pre}
\begin{aligned}
R_{1}
&\ll
HX^{\frac{1}{\ell}-1}
\left(X^{\frac{1}{k}(\phi(\Delta+\epsilon_1)-\frac{1}{\ell}(\Delta+\epsilon_1))}
+X^{\frac{1}{k}(1-\eta)}\right)L^{A+1}\\
&\ll
HX^{\frac{1}{\ell}-1+\frac{1}{k}(\phi(\Delta+\epsilon_1)-\frac{1}{\ell}(\Delta+\epsilon_1))}L^{A+1}
+
HX^{\frac{1}{k}+\frac{1}{\ell}-1}B^{-1}.
\end{aligned}
\end{equation}
By \cref{lem:phi_dev} and the mean value theorem,
\[
\phi(\Delta+\epsilon_1)-\frac{1}{\ell}(\Delta+\epsilon_1)
\le
\phi(\Delta)-\frac{1}{\ell}\Delta+\epsilon_1.
\]
Thus, by \cref{main:R1_pre}, we obtain
\begin{equation}
\label{main:R1}
R_1\ll
HX^{\frac{1}{\ell}-1+\frac{1}{k}(\phi(\Delta)-\frac{1}{\ell}\Delta)+2\epsilon_1}
+
HX^{\frac{1}{k}+\frac{1}{\ell}-1}B^{-1}.
\end{equation}
This completes the estimate of $R_1$.

We next estimate the sum $R_2$. We use
\begin{equation}
\label{main:R2_int}
(X+H)^{\frac{\rho}{k}}-X^{\frac{\rho}{k}}
=
\frac{\rho}{k}\int_{X}^{X+H}u^{\frac{\rho}{k}-1}du
\ll
|\gamma|HX^{\frac{\beta}{k}-1}.
\end{equation}
Then, since $X/|\gamma|\ge1$ for $|\gamma|\le T\le X$,
\[
R_{2}
\ll
HX^{-1}\sum_{|\gamma|\le T}X^{\frac{\beta}{k}}
\ll
HX^{\frac{1}{\ell}-1}\sum_{|\gamma|\le T}\frac{X^{\frac{\beta}{k}}}{|\gamma|^{\frac{1}{\ell}}}.
\]
This right-hand side is the same quantity
appeared in \cref{main:R1_dyadic}.
Thus,
\begin{equation}
\label{main:R2}
R_{2}
\ll
HX^{\frac{1}{\ell}-1+\frac{1}{k}(\phi(\Delta)-\frac{1}{\ell}\Delta)+2\epsilon_1}
+
HX^{\frac{1}{k}+\frac{1}{\ell}-1}B^{-1}.
\end{equation}
This completes the estimate of $R_2$.

We finally estimate the sum $R_3$.
We dissect the sum dyadically to obtain
\begin{equation}
\label{main:R3_dyadic}
R_3
\ll
L\sup_{1\le K\le T}K^{-\frac{1}{2}}\sum_{K<|\gamma|\le2K}(HK)^{\frac{\beta}{k}}.
\end{equation}
We again write $K=X^{\frac{\delta}{k}}$
and use the parameter $\Delta$ defined in \cref{main:Delta}.
By \cref{main:T_range},
\[
X^{\frac{\Delta+\epsilon_1}{k}}=X^{1+\frac{\epsilon_1}{k}}H^{-1}=T\le X^{\frac{1}{k}}
\]
so that
\begin{equation}
\label{main:Delta_range}
0\le\Delta\le1-\epsilon_1.
\end{equation}
Let
\[
\lambda=\lambda(\delta)=\frac{\log K}{\log(HK)^{\frac{1}{k}}}
=\frac{k\log K}{\log H+\log K}
=\frac{\delta}{1-\frac{\Delta}{k}+\frac{\delta}{k}}.
\]
By \cref{main:Delta_range},
this function $\lambda(\delta)$ is increasing with respect to $\delta$.
Note that
\[
K
=K^{1-\frac{1}{k}}K^{\frac{1}{k}}
\le T^{1-\frac{1}{k}}K^{\frac{1}{k}}
\le (X^{1-\frac{1}{k}}K)^{\frac{1}{k}}
\le (HK)^{\frac{1}{k}}
\]
by \cref{main:T_range} provided \cref{HT_cond}.
Thus, by using \cref{lem:zero_sum} with $Y=(HK)^{\frac{1}{k}}$,
\[
K^{-\frac{1}{2}}\sum_{K<|\gamma|\le2K}(HK)^{\frac{\beta}{k}}
\ll
\left((HK)^{\frac{1}{k}(\phi(\lambda)-\frac{1}{2}\lambda)}
+(HK)^{\frac{1}{k}(1-\eta+(2\eta-\frac{1}{2})\lambda)}\right)L^A.
\]
Since
\[
HK=X(XH^{-1})^{-1}K=X^{1-\frac{\Delta}{k}+\frac{\delta}{k}},
\]
the last estimate is rewritten as
\begin{equation}
\label{main:R3_zero_sum}
\begin{aligned}
&K^{-\frac{1}{2}}\sum_{K<|\gamma|\le2K}(HK)^{\frac{\beta}{k}}\\
&\ll
\left(
X^{\frac{1}{k}(1-\frac{\Delta}{k}+\frac{\delta}{k})(\phi(\lambda)-\frac{1}{2}\lambda)}
+X^{\frac{1}{k}(1-\frac{\Delta}{k}+\frac{\delta}{k})(1-\eta+(2\eta-\frac{1}{2})\lambda)}
\right)L^A.
\end{aligned}
\end{equation}
Since both of the factors
\[
\left(1-\frac{\Delta}{k}+\frac{\delta}{k}\right),\quad
\left(\phi(\lambda)-\frac{1}{2}\lambda\right)
\]
are increasing function of $\delta$, by \cref{main:delta_range},
\begin{align}
X^{\frac{1}{k}(1-\frac{\Delta}{k}+\frac{\delta}{k})(\phi(\lambda)-\frac{1}{2}\lambda)}
&\le
X^{\frac{1}{k}(1+\epsilon_1)
(\phi(\lambda(\Delta+\epsilon_1))-\frac{1}{2}\lambda(\Delta+\epsilon_1))}\\
&\le
X^{\frac{1}{k}(\phi(\lambda(\Delta+\epsilon_1))-\frac{1}{2}\lambda(\Delta+\epsilon_1))
+\epsilon_1}.
\end{align}
Since
\[
\lambda'(\delta)
=
\frac{1-\frac{\Delta}{k}}{(1-\frac{\Delta}{k}+\frac{\delta}{k})^2}
\le
1-\frac{\Delta}{k}
\le
1\quad\text{for}\quad\Delta\le\delta\le\Delta+\epsilon_1,
\]
by \cref{lem:phi_dev} and the mean value theorem,
\begin{equation}
\phi(\lambda(\Delta+\epsilon_1))-\frac{1}{2}\lambda(\Delta+\epsilon_1)
\le
\phi(\lambda(\Delta))-\frac{1}{2}\lambda(\Delta)+\epsilon_1
=
\phi(\Delta)-\frac{1}{2}\Delta+\epsilon_1.
\end{equation}
Thus,
\begin{equation}
\label{main:R3_zero_sum1}
X^{\frac{1}{k}(1-\frac{\Delta}{k}+\frac{\delta}{k})(\phi(\lambda)-\frac{1}{2}\lambda)}
\le
X^{\frac{1}{k}(\phi(\Delta)-\frac{1}{2}\Delta)+2\epsilon_1}.
\end{equation}
Since
\begin{align}
\left(1-\frac{\Delta}{k}+\frac{\delta}{k}\right)
\left(1-\eta+\left(2\eta-\frac{1}{2}\right)\lambda\right)
&=
\left(1-\frac{\Delta}{k}+\frac{\delta}{k}\right)(1-\eta)+\left(2\eta-\frac{1}{2}\right)\delta\\
&=
\left(1-\frac{\Delta}{k}\right)(1-\eta)
+\left(\frac{1-\eta}{k}+2\eta-\frac{1}{2}\right)\delta,
\end{align}
we have
\begin{equation}
\label{main:R3_zero_sum2}
\begin{aligned}
X^{\frac{1}{k}(1-\frac{\Delta}{k}+\frac{\delta}{k})(1-\eta+(\eta-\frac{1}{2})\lambda)}
&\ll
\left\{
\begin{array}{ll}
H^{1-\eta}X^{(\frac{1}{2}+\eta)(\Delta+\epsilon_1)}&(\text{if $k=1$}),\\
H^{\frac{1-\eta}{k}}X^{\eta(\Delta+\epsilon_1)}&(\text{if $k\ge2$}),
\end{array}
\right.\\
&\ll
\left\{
\begin{array}{ll}
H^{\frac{1}{2}}X^{\frac{1}{2}+2\epsilon_1}&(\text{if $k=1$}),\\
H^{\frac{1}{k}}X^{2\epsilon_1}&(\text{if $k\ge2$}),
\end{array}
\right.
\end{aligned}
\end{equation}
for sufficiently large $X$.
By \cref{main:R3_dyadic}, \cref{main:R3_zero_sum},
\cref{main:R3_zero_sum1}, and \cref{main:R3_zero_sum2},
\begin{equation}
\label{main:R3}
R_3
\ll
\left\{
\begin{array}{ll}
X^{\frac{1}{k}(\phi(\Delta)-\frac{1}{2}\Delta)+3\epsilon_1}
+
H^{\frac{1}{2}}X^{\frac{1}{2}+3\epsilon_1}&(\text{if $k=1$}),\\
X^{\frac{1}{k}(\phi(\Delta)-\frac{1}{2}\Delta)+3\epsilon_1}
+
H^{\frac{1}{k}}X^{3\epsilon_1}&(\text{if $k\ge2$}).
\end{array}
\right.
\end{equation}

By combining \cref{main:decomp}, \cref{main:error}, \cref{main:main},
\cref{main:R1}, \cref{main:R2}, and \cref{main:R3},
we have
\[
\sum_{X<N\le X+H}R(N)
=
\GG{1}HX^{\frac{1}{k}+\frac{1}{\ell}-1}
+O\left(HX^{\frac{1}{k}+\frac{1}{\ell}-1}B^{-1}+E\right)
\]
provided
\begin{equation}
\label{main:H_cond_pre}
\begin{array}{ll}
X^{1-\min(\frac{1}{k},\frac{1}{2}(\frac{1}{k}+\frac{1}{\ell}),
\frac{2}{\ell})+\epsilon}\le H\le X^{1-\epsilon}
&(\text{if $k=1$}),\\
X^{1-\min(\frac{1}{k},\frac{1}{2}(\frac{1}{k}+\frac{1}{\ell}),
\frac{k}{\ell(k-1)})+\epsilon}\le H\le X^{1-\epsilon}
&(\text{if $k\ge2$}),
\end{array}
\end{equation}
and $\epsilon_1\le\frac{\epsilon}{16}$, where
\begin{equation}
\label{main:E}
E
=
HX^{\frac{1}{\ell}-1+\frac{1}{k}(\phi(\Delta)-\frac{1}{\ell}\Delta)+4\epsilon_1}
+
X^{\frac{1}{k}(\phi(\Delta)-\frac{1}{2}\Delta)+4\epsilon_1},\quad
XH^{-1}=X^{\frac{\Delta}{k}}.
\end{equation}
Let $\lambda_1,\lambda_2$ be the functions given by \cref{def:lambda},
or equivalently, given in \cref{lem:phi_solve}.
Then, By assuming further
\[
X^{1-\frac{\min(\lambda_1,\lambda_2)}{k}+\epsilon}\le H,
\]
we have $0\le\Delta\le\min(\lambda_1,\lambda_2)-k\epsilon$.
Thus, \cref{lem:phi_solve_shift} and \cref{main:E} implies
\[
E
\ll
HX^{\frac{1}{k}+\frac{1}{\ell}-1-\frac{\epsilon}{10}+4\epsilon_1}.
\]
Thus, by taking $\epsilon_1=\frac{\epsilon}{80}$,
we obtain the asymptotic formula \cref{LZ_asymp_kl} provided
\begin{equation}
\label{main:H_cond_prefinal}
\begin{array}{ll}
X^{1-\min(\frac{\lambda_1}{k},\frac{\lambda_2}{k},\frac{1}{k},\frac{1}{2}(\frac{1}{k}+\frac{1}{\ell}),
\frac{2}{\ell})+\epsilon}\le H\le X^{1-\epsilon}
&(\text{if $k=1$}),\\
X^{1-\min(\frac{\lambda_1}{k},\frac{\lambda_2}{k},\frac{1}{k},\frac{1}{2}(\frac{1}{k}+\frac{1}{\ell}),
\frac{k}{\ell(k-1)})+\epsilon}\le H\le X^{1-\epsilon}
&(\text{if $k\ge2$}).
\end{array}
\end{equation}

Our remaining task is to remove the exponents
$\frac{1}{k}$ and $\frac{1}{2}(\frac{1}{k}+\frac{1}{\ell})$ in \cref{main:H_cond_prefinal}.
Since $\lambda_1(\ell)\le1$ for any $\ell\ge2$, we have
\[
\frac{1}{k}\ge\frac{\lambda_1(\ell)}{k}.
\]
Thus, we can remove the exponent $\frac{1}{k}$ in \cref{main:H_cond_prefinal}.
Note that
\[
\frac{1}{2}\left(\frac{1}{k}+\frac{1}{\ell}\right)\ge\frac{\lambda_1(\ell)}{k}
\quad\Longleftrightarrow\quad
k\ge(2\lambda_1(\ell)-1)\ell=:\tilde{\lambda}_1(\ell).
\]
For the function $\tilde{\lambda}_1(\ell)$, we have
\[
\tilde{\lambda}_1(1),\ldots,\tilde{\lambda}_1(5)\le2,\quad
\tilde{\lambda}_1(6),\ldots,\tilde{\lambda}_1(9)\le1
\]
numerically and
\[
\ell\ge10\Longrightarrow
\tilde{\lambda}_1(\ell)\le(2\lambda_1(10)-1)\ell=0
\]
since $\lambda_1(\ell)$ is decreasing.
Thus,
\[
\frac{1}{2}\left(\frac{1}{k}+\frac{1}{\ell}\right)\ge\frac{\lambda_1(\ell)}{k}
\]
except the cases $(k,\ell)=(1,2),(1,3),(1,4),(1,5)$ for which we can check numerically
\[
\frac{1}{2}\left(\frac{1}{k}+\frac{1}{\ell}\right)\ge\frac{\lambda_2(k,\ell)}{k}.
\]
Thus, we can remove the exponent $\frac{1}{2}(\frac{1}{k}+\frac{1}{\ell})$ in \cref{main:H_cond_prefinal}.
This completes the proof.
\end{proof}

We next replace the exponent $\theta_C(k,\ell)$ by $\theta_A(k,\ell)$ as in \cref{thm:main_kl}.

\begin{lemma}
\label{lem:B_determined}
For positive integers $k,\ell$ with $\ell\ge2$, we have
\[
\frac{5}{12k}
\ge
\frac{k}{\ell(k-1)}
\]
if and only if
\begin{equation}
\label{B_determined:second}
\ell\ge10\quad\text{and}\quad
\frac{5}{24}\ell-\frac{1}{24}\sqrt{\ell(25\ell-240)}\le k\le\frac{5}{24}\ell+\frac{1}{24}\sqrt{\ell(25\ell-240)}.
\end{equation}
\end{lemma}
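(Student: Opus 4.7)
The plan is to reduce the biconditional to a quadratic inequality in $k$ and analyze when it has solutions. First, I would dispose of the degenerate case $k=1$: by the convention $\min(A,\infty)=A$, the expression $\frac{k}{\ell(k-1)}$ acts as $+\infty$ there, so the inequality fails. On the other side, if $k=1$ lay in the range \cref{B_determined:second}, then $\frac{5\ell - \sqrt{\ell(25\ell-240)}}{24}\le 1$, and squaring $5\ell - 24 \le \sqrt{\ell(25\ell-240)}$ (valid for $\ell\ge 10$) yields the impossibility $576\le 0$. So for $k=1$ both sides of the biconditional are false.

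For $k\ge 2$, both denominators $12k$ and $\ell(k-1)$ are positive, so clearing them transforms the inequality $\frac{5}{12k}\ge\frac{k}{\ell(k-1)}$ into the equivalent polynomial inequality
\[
12k^2 - 5\ell k + 5\ell \le 0.
\]
This is a quadratic in $k$ with positive leading coefficient and discriminant $D=25\ell^2-240\ell=\ell(25\ell-240)$. Real roots require $D\ge 0$, i.e.\ $\ell\ge\frac{240}{25}=9.6$, which for integer $\ell\ge 2$ is exactly $\ell\ge 10$.

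When $\ell\ge 10$, the quadratic is $\le 0$ precisely between its two real roots, namely
\[
\frac{5\ell\pm\sqrt{\ell(25\ell-240)}}{24} = \frac{5}{24}\ell\pm\frac{1}{24}\sqrt{\ell(25\ell-240)},
\]
which is exactly the range stated in \cref{B_determined:second}. Combining the $k=1$ analysis with this range for $k\ge 2$ gives the asserted equivalence. This argument is entirely elementary; the only subtle point is the separate treatment of $k=1$ dictated by the $\min(A,\infty)=A$ convention, and the verification that the boundary $k=1$ never meets the quadratic's solution interval.
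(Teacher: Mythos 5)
Your proof is correct and follows essentially the same route as the paper: reduce to the quadratic inequality $12k^2-5\ell k+5\ell\le0$ for $k\ge2$ (the paper completes the square where you use the discriminant and root formula, which is equivalent) and check separately that $k=1$ never lies in the stated interval. No gaps.
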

\begin{proof}
This lemma follows just by solving the quadratic inequality
\[
\frac{5}{12k}\ge\frac{k}{\ell(k-1)}
\quad\Longleftrightarrow\quad
\left(k-\frac{5}{24}\ell\right)^2\le\left(\frac{1}{24}\right)^2\ell(25\ell-240)
\]
for $k\ge2$. Note that \cref{B_determined:second} never holds for $k=1$ since
\begin{align}
\frac{5}{24}\ell-\frac{1}{24}\sqrt{\ell(25\ell-240)}
&=
\frac{5}{24}\ell-\frac{5}{24}\ell\sqrt{1-\frac{48}{5\ell}}\\
&>
\frac{5}{24}\ell-\frac{5}{24}\ell\left(1-\frac{24}{5\ell}\right)
=1
\end{align}
for $\ell\ge10$. This completes the proof.
\end{proof}

\begin{lemma}
\label{lem:comparisonAB}
Let $\theta_{A}(k,\ell),\theta_{B}(k,\ell)$ be functions given in \cref{thm:main_kl}.
Then, for positive integers $k,\ell$ with $\ell\ge2$, we have
\[
\theta_{B}(k,\ell)<\theta_{A}(k,\ell)
\quad\Longleftrightarrow\quad
\left\{
\begin{array}{c}
\ell\le9\ \text{and}\ \frac{5}{24}\ell<k,\\[3mm]
\text{or}\quad\ell\ge10\ \text{and}\ \frac{5}{24}\ell+\frac{1}{24}\sqrt{\ell(25\ell-240)}<k.
\end{array}
\right.
\]
\end{lemma}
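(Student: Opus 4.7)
The plan is to reduce the comparison of $\theta_B$ and $\theta_A$ to three elementary inequalities, each of which can be handled by monotonicity of the functions $\lambda_1,\lambda_2$.

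First I would exploit the fact that both minimums defining $\theta_A$ and $\theta_B$ contain the common term $c(k,\ell):=k/(\ell(k-1))$. Write $a_B:=5/(12k)$, $a_A^{(1)}:=\lambda_1(\ell)/k$, $a_A^{(2)}:=\lambda_2(k,\ell)/k$, so that $\theta_B=\min(a_B,c)$ and $\theta_A=\min(a_A^{(1)},a_A^{(2)},c)$. If $\theta_B=c$, i.e.\ $c\le a_B$, then $\theta_A\le c=\theta_B$ automatically, so $\theta_B<\theta_A$ is impossible. Hence $\theta_B<\theta_A$ forces $a_B<c$, so $\theta_B=a_B=5/(12k)$, and the strict inequality $\theta_B<\theta_A$ becomes the conjunction of
\[
\text{(a)}\ a_B<c,\qquad
\text{(b)}\ \tfrac{5}{12}<\lambda_1(\ell),\qquad
\text{(c)}\ \tfrac{5}{12}<\lambda_2(k,\ell).
\]

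Next I would dispose of (b) and (c). For (b), I would check each piece of the definition \cref{def:lambda} of $\lambda_1$: on $2\le\ell\le3$ and on $3\le\ell\le25/3$ one has $\lambda_1(\ell)\ge\lambda_1(25/3)=25/48$ by the monotonicity lemma \cref{lem:monotonicity_lambda}; on $\ell\ge25/3$ one computes directly that $\frac{5\ell}{4(3\ell-5)}>\frac{5}{12}$ since $60\ell>60\ell-100$. Thus (b) always holds for $\ell\ge2$. For (c), I would observe that $\frac{5}{24}<\frac{31}{96}$, so near $k=\frac{5}{24}\ell$ the function $\lambda_2(k,\ell)$ is given by the first piece $\frac{10}{11}(k/\ell+1/4)$, and a direct substitution gives $\lambda_2(\tfrac{5}{24}\ell,\ell)=\tfrac{5}{12}$. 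Since $\lambda_2(k,\ell)$ is increasing in $k$ on each piece (hence globally, by continuity at the joins), condition (c) is equivalent to $k>\frac{5}{24}\ell$.

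Finally I would combine (a) and (c). By \cref{lem:B_determined}, the negation of (a) is precisely the interval in \cref{B_determined:second}, which requires $\ell\ge10$ and $\tfrac{5}{24}\ell-\tfrac{1}{24}\sqrt{\ell(25\ell-240)}\le k\le\tfrac{5}{24}\ell+\tfrac{1}{24}\sqrt{\ell(25\ell-240)}$. If $\ell\le9$, then (a) is automatic and the whole condition reduces to (c), i.e.\ $k>\tfrac{5}{24}\ell$. If $\ell\ge10$, then condition (c) already rules out the possibility that $k$ lies below the interval of \cref{B_determined:second} (since $\tfrac{5}{24}\ell-\tfrac{1}{24}\sqrt{\ell(25\ell-240)}<\tfrac{5}{24}\ell$), so (a)$\wedge$(c) collapses to the single strict inequality $k>\tfrac{5}{24}\ell+\tfrac{1}{24}\sqrt{\ell(25\ell-240)}$. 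This matches the claimed equivalence.

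The only genuine obstacle is the bookkeeping in checking (b): verifying $\lambda_1(\ell)>5/12$ uniformly requires looking at each of the three pieces of \cref{def:lambda} and confirming the limit $\lambda_1(\ell)\to5/12$ as $\ell\to\infty$ is never reached. Everything else is linear algebra in $k$ once Lemma~\ref{lem:B_determined} is invoked.
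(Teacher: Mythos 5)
Your proposal is correct and rests on the same ingredients as the paper's proof: \cref{lem:B_determined} to compare $\frac{5}{12k}$ with $\frac{k}{\ell(k-1)}$, the identity $\lambda_2(k,\frac{24}{5}k)=\frac{5}{12}$ together with the monotonicity of $\lambda_2$, and the bound $\lambda_1(\ell)>\frac{5}{12}$. The paper organizes this as a case split on $k\lessgtr\frac{5}{24}\ell$ rather than your reduction to the conjunction (a)$\wedge$(b)$\wedge$(c), but the two arguments are essentially the same.
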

\begin{proof}
We first consider the case $k\le\frac{5}{24}\ell$.
In this case,
\begin{align}
\theta_{A}(k,\ell)
&\le
\min\left(\frac{\lambda_2(k,\ell)}{k},\frac{k}{\ell(k-1)}\right)\\
&\le
\min\left(\frac{\lambda_2(k,\frac{24}{5}k)}{k},\frac{k}{\ell(k-1)}\right)\\
&=
\min\left(\frac{5}{12k},\frac{k}{\ell(k-1)}\right)
=
\theta_{B}(k,\ell)
\end{align}
by \cref{lem:monotonicity_lambda}.
Thus, in the case $k\le\frac{5}{24}\ell$, both hand sides of the assertion are false
so that the assertion holds.

We consider the remaining case $k>\frac{5}{24}\ell$.
In this case, by \cref{lem:B_determined},
\[
\theta_{B}(k,\ell)
=
\left\{
\begin{array}{>{\displaystyle}cl}
\frac{k}{\ell(k-1)}&
(\text{if $\ell\ge10$ and $\frac{5}{24}\ell<k%
\le\frac{5}{24}\ell+\frac{1}{24}\sqrt{\ell(25\ell-240)}$}),\\[4mm]
\frac{5}{12k}&(\text{otherwise}).
\end{array}
\right.
\]
Therefore, in the former case, i.e.~in the case
\begin{equation}
\label{comparison:case1}
\ell\ge10\quad\text{and}\quad\frac{5}{24}\ell<k\le \frac{5}{24}\ell+\frac{1}{24}\sqrt{\ell(25\ell-240)},
\end{equation}
we have
\[
\theta_{A}(k,\ell)
\le
\frac{k}{\ell(k-1)}
=
\theta_{B}(k,\ell).
\]
This again makes the both sides of the assertion false,
which proves the assertion for the case~\cref{comparison:case1}.

In the remaining case, in which \cref{comparison:case1} does not hold
but $k>\frac{5}{24}\ell$ holds,
we have
\begin{equation}
\frac{\lambda_1(\ell)}{k}>\frac{5}{12k},\quad
\frac{\lambda_2(k,\ell)}{k}>\frac{\lambda_2(k,\frac{24}{5}k)}{k}=\frac{5}{12k},\quad
\frac{k}{\ell(k-1)}>\frac{5}{12k}
\end{equation}
by \cref{lem:monotonicity_lambda} and \cref{lem:B_determined}.
Thus,
\[
\theta_{A}(k,\ell)
>
\frac{5}{12k}
=
\theta_{B}(k,\ell)
\]
in the remaining case. This completes the proof.
\end{proof}

\begin{lemma}
\label{lem:comparisonBC}
Let $\theta_{B}(k,\ell),\theta_{C}(k,\ell)$ be functions
given in \cref{lem:trivial} and \cref{lem:lemma_kl}, respectively.
Then, for positive integers $k,\ell$ with $\ell\ge2$, we have
\[
\theta_{B}(k,\ell)<\theta_{C}(k,\ell)
\quad\Longleftrightarrow\quad
\left\{
\begin{array}{c}
\ell\le9\ \text{and}\ \frac{5}{24}\ell<k,\\[3mm]
\text{or}\quad\ell\ge10\ \text{and}\ \frac{5}{24}\ell+\frac{1}{24}\sqrt{\ell(25\ell-240)}<k
\end{array}
\right.
\]
and
\[
\max(\theta_{B}(k,\ell),\theta_{C}(k,\ell))
=
\max(\theta_{A}(k,\ell),\theta_{B}(k,\ell)).
\]
\end{lemma}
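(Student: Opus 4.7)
My plan is to deduce the lemma from \cref{lem:comparisonAB} by verifying that in every range of $(k,\ell)$, either $\theta_C(k,\ell)=\theta_A(k,\ell)$ (so that both statements transfer directly from \cref{lem:comparisonAB}) or else $\theta_C(k,\ell)\le\theta_B(k,\ell)$ (so that the strict inequality $\theta_B<\theta_C$ fails and both maxes equal $\theta_B$).

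For $k\ge2$ the definitions of $\theta_A$ and $\theta_C$ coincide, so both assertions are immediate from \cref{lem:comparisonAB}. The substantive work is the case $k=1$, where $\theta_B(1,\ell)=5/12$ and $\theta_C(1,\ell)=\min(\theta_A(1,\ell),2/\ell)$. I would also note that the right-hand side of the equivalence, specialized to $k=1$, reduces to $\ell\le4$: the disjunct $\ell\ge10$ fails since $\frac{5}{24}\ell\ge\frac{25}{12}>1$, and the disjunct $\ell\le9$ with $\frac{5}{24}\ell<1$ is equivalent to $\ell\le4$.

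For $k=1$ and $\ell\ge5$, the condition $1\le\frac{31}{96}\ell$ places us in the third branch of $\lambda_2$, giving the closed form $\lambda_2(1,\ell)=\frac{10}{11}\bigl(\frac{1}{\ell}+\frac{1}{4}\bigr)$. This is a decreasing function of $\ell$ taking value exactly $5/12$ at $\ell=24/5$, so for integer $\ell\ge5$ we get $\lambda_2(1,\ell)\le5/12$, hence $\theta_A(1,\ell)\le\lambda_2(1,\ell)\le5/12=\theta_B(1,\ell)$. In particular $\theta_C(1,\ell)\le\theta_A(1,\ell)\le\theta_B(1,\ell)$, which makes $\theta_B<\theta_C$ false (consistent with the right-hand side) and makes both maxes equal $\theta_B$.

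For $k=1$ and $\ell\in\{2,3,4\}$, I would verify $\lambda_2(1,\ell)\le2/\ell$ by direct substitution into the middle branch of $\lambda_2$ (for $\ell\in\{2,3\}$) and the third branch (for $\ell=4$); the resulting values $(17+4\sqrt{15})/49$, $(44+24\sqrt{2})/147$, and $5/11$ are strictly below $1$, $2/3$, and $1/2$, respectively. Since $\theta_A(1,\ell)\le\lambda_2(1,\ell)\le 2/\ell$, this gives $\theta_C(1,\ell)=\theta_A(1,\ell)$, and both parts of the lemma transfer from \cref{lem:comparisonAB}. The main obstacle is simply the bookkeeping surrounding the $k=1$ case: once the appropriate branch of $\lambda_2$ is identified and the ordering with $2/\ell$ and $5/12$ is pinned down, everything reduces to elementary arithmetic.
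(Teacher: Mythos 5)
Your proposal is correct and follows essentially the same route as the paper: reduce to the case $k=1$ (where $\theta_A=\theta_C$ fails only through the extra term $2/\ell$), and check $\theta_A(1,\ell)\le 2/\ell$ for $\ell=2,3,4$ via the same three numerical values $(17+4\sqrt{15})/49$, $(44+24\sqrt 2)/147$, $5/11$. The only cosmetic difference is that for $\ell\ge5$ you re-derive $\theta_A(1,\ell)\le\theta_B(1,\ell)$ directly from the monotonicity of $\lambda_2(1,\cdot)$, whereas the paper simply quotes \cref{lem:comparisonAB}.
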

\begin{proof}
If $k\ge2$, then this trivially holds
since $\theta_{A}(k,\ell)=\theta_{C}(k,\ell)$ for $k\ge2$.
Thus we consider the case $k=1$.
Since $\theta_{C}(1,\ell)\le\theta_{A}(1,\ell)$ for any case,
it suffices to prove that $\theta_{A}(1,\ell)\le\frac{2}{\ell}$
if $\theta_{B}(1,\ell)<\theta_{A}(1,\ell)$.
By \cref{lem:comparisonAB}, $\theta_{B}(1,\ell)<\theta_{A}(1,\ell)$ holds
if and only if $\ell=2,3,4$. For these cases, we have
\[
\theta_{A}(1,2)=\frac{17+4\sqrt{15}}{49}\le1,\quad
\theta_{A}(1,3)=\frac{44+24\sqrt{2}}{147}\le\frac{2}{3},\quad
\theta_{A}(1,4)=\frac{5}{11}\le\frac{1}{2}.
\] 
This completes the proof.
\end{proof}

We now complete the proof of main theorems.
Since \cref{thm:main_12} is just a special case of \cref{thm:main_kl},
we prove only \cref{thm:main_kl} and \cref{thm:main_k2}.

\begin{proof}[Proof of \cref{thm:main_kl}]
By \cref{lem:trivial} and \cref{lem:lemma_kl},
we have \cref{LZ_asymp_kl} provided
\[
X^{1-\max(\theta_{B}(k,\ell),\theta_{C}(k,\ell))+\epsilon}\le H\le X^{1-\epsilon}.
\]
Then the theorem follows by \cref{lem:comparisonBC}.
\end{proof}

\begin{proof}[Proof of \cref{thm:main_k2}]
By \cref{thm:main_kl} and \cref{lem:comparisonAB}, it suffices to prove
\begin{equation}
\label{main_k2:thetaA}
\theta_{A}(k,\ell)=\frac{1}{k}\quad\text{for $k\ge2$ and $\ell=2$}.
\end{equation}
Since $k\ge\ell$, we have
\[
\lambda_2(k,\ell)\ge1,\quad
\frac{k}{\ell(k-1)}\ge\frac{1}{\ell}\ge\frac{1}{k}.
\]
Also, $\lambda_1(2)=1$.
Thus, we obtain \cref{main_k2:thetaA}
and arrive at the theorem.
\end{proof}

\section{Comparison of the exponents}
\label{section:comparison}

In this section, we compare three exponents $\theta_{A},\theta_{B}$ and $\theta_{LZ}$.
As a preparation, we prove \cref{theta_determined},
which determines the value $\theta=\max(\theta_{A},\theta_{B})$ more precisely.
\begin{lemma}
\label{lem:theta_determined}
Let $\theta(k,\ell)$ be the function given in \cref{thm:main_kl}.
Then, for positive integers $k,\ell$ with $\ell\ge2$, we have
\[
\theta(k,\ell)
=
\left\{
\begin{array}{>{\displaystyle}cl}
\frac{\lambda_2(k,\ell)}{k}&(\text{for $(k,\ell)=(1,2),(1,3),(1,4)$}\\
&\hspace{8mm}\text{$(2,5),(2,6),(2,7),(2,8),(2,9)$}),\\[2mm]
\theta_{B}(k,\ell)&(\text{for $k=1$ and $\ell\ge5$}),\\[2mm]
\min\left(\frac{\lambda_1(\ell)}{k},\frac{k}{\ell(k-1)}\right)&(\text{otherwise}).
\end{array}
\right.
\]
\end{lemma}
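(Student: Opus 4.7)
The plan is to deduce \cref{lem:theta_determined} by combining \cref{lem:comparisonAB}, \cref{lem:B_determined}, and the piecewise definitions of $\theta_A, \theta_B, \lambda_1, \lambda_2$. Write $\theta = \max(\theta_A, \theta_B)$ and split the analysis according to which of $\theta_A, \theta_B$ is larger; the two resulting regimes will cover all three cases of the claim.

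First I would handle the regime $\theta_B \ge \theta_A$. If $k = 1$ and $\ell \ge 5$, then $1 \le 5\ell/24$, so \cref{lem:comparisonAB} gives $\theta_B \ge \theta_A$ and hence $\theta = \theta_B$, which is case 2. Otherwise we must have $k \ge 2$, since $k = 1$ with $\ell \le 4$ lies in the opposite regime. In this situation the characterisation from \cref{lem:comparisonAB} forces $k$ inside the interval appearing in \cref{lem:B_determined} (the lower bound $5\ell/24 - \sqrt{\ell(25\ell-240)}/24$ is always less than $2$ for $\ell \ge 10$, as one checks by squaring), so $5/(12k) \ge k/(\ell(k-1))$ and therefore $\theta_B = k/(\ell(k-1))$. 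Combined with $\lambda_1(\ell) > 5/12$ for every $\ell \ge 2$ (immediate from each piece of \cref{def:lambda}, since $5\ell/(4(3\ell-5)) > 5/12$ for every finite $\ell$), we obtain $\lambda_1/k > 5/(12k) \ge k/(\ell(k-1))$, whence $\min(\lambda_1/k, k/(\ell(k-1))) = k/(\ell(k-1)) = \theta_B$, matching the case 3 formula.

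Next, in the regime $\theta_A > \theta_B$ we have $\theta = \min(\lambda_1/k, \lambda_2/k, k/(\ell(k-1)))$. For each of the eight pairs $(1,2), (1,3), (1,4), (2,5), \ldots, (2,9)$ listed in case 1, direct substitution into \cref{def:lambda} verifies $\lambda_2/k \le \lambda_1/k$ and $\lambda_2/k \le k/(\ell(k-1))$ (the latter vacuous when $k=1$), so $\theta = \lambda_2/k$. For all other pairs in this regime, which constitute the remainder of case 3, the task is to show $\lambda_2/k \ge \min(\lambda_1/k, k/(\ell(k-1)))$, collapsing the minimum to the case 3 formula.

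The main obstacle is this last verification, a somewhat tedious case analysis based on where $k/\ell$ sits relative to the breakpoints $31/96$ and $5/8$ in the three pieces of $\lambda_2$. As a sample, for $\ell = 2$ and $k \ge 2$ we have $\lambda_2 = (k+1)/3$ (from the first piece, as $k/\ell \ge 1 > 5/8$) and $\lambda_1 = 1$, giving $\lambda_2/k = (k+1)/(3k) \ge 1/k = \lambda_1/k$ precisely when $k \ge 2$. Similar explicit substitutions handle $\ell \in \{3, 4\}$ with $k \ge 2$. For $\ell \ge 5$ with $k \ge 3$, and for the pairs $(2,\ell)$ with $\ell \ge 10$, I would either establish $\lambda_2 \ge \lambda_1$ through the defining equations $\phi(\lambda_1) - \lambda_1/\ell = 1$ and $\phi(\lambda_2) + \lambda_2/2 = 1 + k/\ell$ of \cref{lem:phi_solve} together with monotonicity of $\lambda_2$ in $k$, or show $\ell(k-1)\lambda_2 \ge k^2$ directly. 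Each subcase reduces to an elementary inequality between explicit functions, settled by a short computation.
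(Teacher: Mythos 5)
Your proposal is correct and follows essentially the same route as the paper: both arguments rest on \cref{lem:comparisonAB}, \cref{lem:B_determined} and the monotonicity of $\lambda_1,\lambda_2$, reduce the substantive case to the piecewise comparison of $\lambda_2$ with $\lambda_1$ and $k^2/(\ell(k-1))$, and defer a finite list of pairs to direct numerical verification; you merely organize the cases by which of $\theta_A,\theta_B$ dominates rather than by the ranges $k=1$, $k>\ell$, $\ell\ge22$, remainder. The only step you leave as a sketch --- that $\lambda_2\ge\lambda_1$ when $k$ sits just above the threshold of \cref{lem:B_determined} for large $\ell$ --- is exactly the computation the paper carries out by bounding $k/\ell>\frac{43}{132}$ and invoking \cref{lem:monotonicity_lambda}, so your outlined strategy for it is the right one.
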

\begin{proof}
We first consider the case $k=1$. In this case,
\cref{lem:comparisonAB} implies
\[
\theta(1,\ell)=\theta_{B}(1,\ell)\quad\text{for}\quad\ell\ge5.
\]
Some numerical computation tells us
\[
\theta(k,\ell)
=
\frac{\lambda_2(k,\ell)}{k}
\]
for the cases $(k,\ell)=(1,2),(1,3),(1,4)$.

We next consider the case $k>\ell$.
In this case, we have
\[
\frac{\lambda_2(k,\ell)}{k}
=
\frac{2}{3k}\left(\frac{k}{\ell}+\frac{1}{2}\right)
>
\frac{1}{k}
\ge
\frac{\lambda_1(\ell)}{k}.
\]
Therefore,
\begin{align}
\theta_{A}(k,\ell)
&=
\min\left(\frac{\lambda_1(\ell)}{k},\frac{\lambda_2(k,\ell)}{k},\frac{k}{\ell(k-1)}\right)\\
&=
\min\left(\frac{\lambda_1(\ell)}{k},\frac{k}{\ell(k-1)}\right)
\ge
\min\left(\frac{5}{12k},\frac{k}{\ell(k-1)}\right)
=
\theta_{B}(k,\ell)
\end{align}
so
\[
\theta(k,\ell)
=
\theta_{A}(k,\ell)
=
\min\left(\frac{\lambda_1(\ell)}{k},\frac{k}{\ell(k-1)}\right)
\]
as in the assertion.

We further consider the case $k\ge2$ and $\ell\ge22$.
If
\[
k\le\frac{5}{24}\ell+\frac{1}{24}\sqrt{\ell(25\ell-240)},
\]
then, since
\begin{align}
\frac{5}{24}\ell-\frac{1}{24}\sqrt{\ell(25\ell-240)}
&=
\frac{5}{24}\ell-\frac{5}{24}\ell\sqrt{1-\frac{48}{5\ell}}\\
&<
\frac{5}{24}\ell-\frac{5}{24}\ell\left(1-\frac{48}{5\ell}\right)
=2\le k,
\end{align}
\cref{lem:B_determined} implies
\[
\theta_{B}(k,\ell)
=\frac{k}{\ell(k-1)}
\ge\theta_{A}(k,\ell)
\quad\text{and}\quad
\frac{k}{\ell(k-1)}\le\frac{5}{12k}<\frac{\lambda_1(\ell)}{k}
\]
so that
\[
\theta(k,\ell)
=
\theta_{B}(k,\ell)
=
\frac{k}{\ell(k-1)}
=
\min\left(\frac{\lambda_1(\ell)}{k},\frac{k}{\ell(k-1)}\right).
\]
Therefore, for the case $k\ge2$ and $\ell\ge22$, it suffices to prove
\begin{equation}
\label{theta_determined:A}
\theta_{A}(k,\ell)
=
\min\left(\frac{\lambda_1(\ell)}{k},\frac{k}{\ell(k-1)}\right)
\end{equation}
provided
\begin{equation}
\label{theta_determined:finalcase}
k>\frac{5}{24}\ell+\frac{1}{24}\sqrt{\ell(25\ell-240)}
\end{equation}
since $\min\left(\frac{\lambda_1(\ell)}{k},\frac{k}{\ell(k-1)}\right)\ge\theta_{B}(k,\ell)$.
If $k>\frac{5}{8}\ell$ further holds, then
\[
\lambda_2(k,\ell)
=
\frac{2}{3}\left(\frac{k}{\ell}+\frac{1}{2}\right)
>
\frac{3}{4}
=
\lambda_1(3)
>
\lambda_1(\ell)
\]
so \cref{theta_determined:A} holds.
Thus we may assume $k\le\frac{5}{8}\ell$.
By \cref{theta_determined:finalcase}, we have
\begin{align}
k>\frac{5}{24}\ell+\frac{1}{24}\sqrt{\ell(25\ell-240)}
&=
\frac{5}{24}\ell+\frac{5}{24}\ell\sqrt{1-\frac{48}{5\ell}}\\
&>
\frac{5}{24}\ell+\frac{5}{24}\ell\left(1-\frac{48}{5\ell}\right)
=\frac{5}{12}\ell-2.
\end{align}
By using $\ell\ge22$, we further find that
\begin{equation}
\label{theta_determined:22kl}
\frac{5}{8}
\ge
\frac{k}{\ell}
>
\frac{5}{12}-\frac{1}{11}
=
\frac{43}{132}
>
\frac{31}{96}.
\end{equation}
Thus, by definition,
\[
\lambda_{2}(k,\ell)
=
\frac{10}{49}+\frac{2k}{7\ell}+\frac{4}{7}\sqrt{\frac{6}{7}\left(\frac{k}{\ell}-\frac{1}{7}\right)}.
\]
By using \cref{theta_determined:22kl} and \cref{lem:monotonicity_lambda}, we have
\[
\lambda_{2}(k,\ell)
\ge
\lambda_{2}\left(k,\frac{132}{43}k\right)
=
\frac{961+156\sqrt{22}}{3234}
>
\frac{1}{2}=\lambda_1(10)
>
\lambda_1(\ell)
\]
so \cref{theta_determined:A} holds.
Thus the assertion holds provided $k\ge2$ and $\ell\ge22$.

The remaining cases satisfy $2\le k\le\ell\le21$ so that only finitely many cases are remaining.
Therefore, we can use some numerical calculation to check that
the assertion holds even for the remaining cases. This completes the proof.
\end{proof}

We now prove that the case \cref{maincase} occur
if and only if \cref{comparison} holds.
Since the exponents $\theta_A$ and $\theta_B$ have been already compared
in \cref{lem:comparisonAB}, it suffices to prove the next lemma.
For completeness, we include the case $k=1$ as well.

\begin{lemma}
\label{lem:new_comparison}
Let $\theta_{LZ}(k,\ell),\theta(k,\ell)$ be functions
given in \cref{thm:LZ_kl} and \cref{thm:main_kl}, respectively.
Then, for positive integers $k,\ell$ with $\ell\ge2$, we have
\[
\theta_{LZ}(k,\ell)<\theta(k,\ell)
\quad\Longleftrightarrow\quad
\ell=2\quad\text{or}\quad
k<\lambda_1(\ell)\ell.
\]
\end{lemma}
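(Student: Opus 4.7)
The plan is to prove the iff by handling $\ell=2$ by direct calculation and $\ell\ge 3$ by case analysis based on the explicit description of $\theta$ provided by \cref{lem:theta_determined}. Throughout I will use the two elementary facts that $\theta_{LZ}(k,\ell)\le 1/\ell$ and that ``$k<\lambda_1(\ell)\ell$'' is equivalent to ``$\lambda_1(\ell)/k>1/\ell$''.

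First, for $\ell=2$ a direct computation suffices. When $k=1$, \cref{lem:theta_determined} gives $\theta(1,2)=\lambda_2(1,2)=(17+4\sqrt{15})/49>1/2=\theta_{LZ}(1,2)$. When $k\ge2$, \cref{thm:main_k2} gives $\theta(k,2)=1/k$, which strictly exceeds $5/(6k)=\theta_{LZ}(k,2)$. So both sides of the claimed equivalence are true when $\ell=2$.

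Next assume $\ell\ge3$. For the contrapositive of one direction, suppose $k\ge\lambda_1(\ell)\ell$. I would first verify that every pair in Cases~I and~II of \cref{lem:theta_determined} satisfies $k<\lambda_1(\ell)\ell$; this is a finite check using $\lambda_1(3)\cdot3=9/4$ and the monotonicity $\lambda_1(\ell)\ell\ge\lambda_1(\ell)\cdot\ell$ being bounded below by $\ge 4$ for $\ell\ge 5$. Hence under the hypothesis we are in Case~III, where $\theta=\min(\lambda_1/k,\,k/(\ell(k-1)))\le\lambda_1/k$. By \cref{lem:monotonicity_lambda}, $\lambda_1(\ell)\le\lambda_1(3)=3/4<5/6$ for $\ell\ge3$, so $\lambda_1/k<5/(6k)$; together with $\lambda_1/k\le1/\ell$ from the hypothesis, this yields $\theta\le\min(5/(6k),1/\ell)=\theta_{LZ}$, so $\theta_{LZ}<\theta$ fails.

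For the converse, suppose $k<\lambda_1(\ell)\ell$. Since $\lambda_1(\ell)<1$ for $\ell\ge3$ this forces $k<\ell$. It is enough to show $\theta>1/\ell$ because $\theta_{LZ}\le1/\ell$. In Case~III the hypothesis gives $\lambda_1/k>1/\ell$, and $k\ge2$ gives $k/(\ell(k-1))>1/\ell$, so the minimum defining $\theta$ strictly exceeds $1/\ell$. In Case~II ($k=1$, $\ell\ge5$), $\theta=\theta_B(1,\ell)=5/12>1/5\ge1/\ell$. In Case~I, $\theta=\lambda_2(k,\ell)/k$ and I need $\lambda_2(k,\ell)>k/\ell$; since $\lambda_2$ depends only on $t=k/\ell$, this reduces to showing $\lambda_2(t)>t$ for every $t<1$. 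In the outer two pieces of \cref{def:lambda} the difference $\lambda_2(t)-t$ is linear, evaluating to $(1-t)/3$ in the first piece and $(5/2-t)/11$ in the third, both positive for the relevant $t$. In the middle piece, isolating the square root and squaring reduces the inequality to $1225t^2-1372t+196<0$, whose roots are $(14\pm4\sqrt{6})/25$ and therefore bracket the middle piece $[31/96,5/8]$ comfortably. This covers all eight Case~I pairs, completing the proof.

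The only nontrivial calculation is the middle-piece quadratic estimate for $\lambda_2$ in the converse direction; everything else is bookkeeping with the piecewise formulas, the monotonicity of $\lambda_1$, and the explicit list of exceptional pairs in \cref{lem:theta_determined}.
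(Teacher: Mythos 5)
Your proof is correct and follows essentially the same route as the paper: handle $\ell=2$ directly, and for $\ell\ge3$ reduce both implications to comparisons with $1/\ell$ by way of the case description in \cref{lem:theta_determined}. The one genuine refinement is that you dispose of the Case~I pairs analytically by proving $\lambda_2(t)>t$ for all $t<1$ (your reduction to $1225t^2-1372t+196<0$ with roots $(14\pm4\sqrt{6})/25$ is correct), where the paper simply cites a numerical check; the only blemish is your auxiliary claim that $\lambda_1(\ell)\ell\ge4$ for $\ell\ge5$, which fails at $\ell=5$ (where $\lambda_1(5)\ell\approx3.03$), though the weaker bound $\lambda_1(\ell)\ell>\tfrac{5}{12}\ell>2$ is all that your finite check actually requires.
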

\begin{proof}
In the case $\ell=2$, as we have seen in the proof of \cref{thm:main_k2},
\[
\theta(k,2)
=
\theta_{A}(k,2)
=
\frac{1}{k}>\frac{5}{6k}\ge\theta_{LZ}(k,2).
\]
Thus, the both sides of the assertion is true, so that the assertion itself is true.

We next consider the case $\ell\ge3$ and $k\ge\lambda_1(\ell)\ell$.
Since $\lambda_1(\ell)$ is decreasing,
\[
\theta_{A}(k,\ell)
\le\frac{\lambda_1(\ell)}{k}
=\min\left(\frac{\lambda_1(\ell)}{k},\frac{1}{\ell}\right)
\le\min\left(\frac{\lambda_1(3)}{k},\frac{1}{\ell}\right)
\le\theta_{LZ}(k,\ell).
\]
Again, since $\lambda_1(\ell)$ is decreasing,
the assumption $k\ge\lambda_1(\ell)\ell$ implies
\[
k\ge\lambda_1(\ell)\ell\ge\frac{5}{12}\ell
\]
so that
\[
\theta_{B}(k,\ell)
\le
\frac{5}{12k}
\le
\min\left(\frac{5}{6k},\frac{1}{\ell}\right)
=
\theta_{\mathrm{LZ}}(k,\ell).
\]
Combining two estimates above,
\[
\theta(k,\ell)
=
\max(\theta_{A}(k,\ell),\theta_{B}(k,\ell))
\le
\theta_{LZ}(k,\ell).
\]
Therefore, both sides of the assertion is false in the current case,
so that the assertion itself holds.

We finally consider the case $\ell\ge3$ and $k<\lambda_1(\ell)\ell$.
In this case, it suffices to prove
\begin{equation}
\label{new_comparison:final}
\theta(k,\ell)>\frac{1}{\ell}
\end{equation}
since $\theta_{LZ}(k,\ell)\le1/\ell$.
In the current case, we immediately have
\[
\frac{\lambda_1(\ell)}{k}>\frac{1}{\ell},\quad
\frac{k}{\ell(k-1)}>\frac{1}{\ell}.
\]
Therefore, by \cref{lem:theta_determined},
in order to prove \cref{new_comparison:final}, it suffices to consider the case
\begin{equation}
\label{new_comparison:exception1}
(k,\ell)
=
(1,3), (1,4), (2,5), (2,6), (2,7), (2,8), (2,9)
\end{equation}
and the case
\begin{equation}
\label{new_comparison:exception2}
k=1\quad\text{and}\quad\ell\ge5.
\end{equation}
In the case \cref{new_comparison:exception1}, we can check \eqref{new_comparison:final} numerically.
For the case \cref{new_comparison:exception2}, it suffices to see
\[
\frac{5}{12k}>\frac{1}{\ell}
\]
which trivially holds. This completes the proof.
\end{proof}

\subsection*{Acknowledgements}
The author would like to thank Prof.~Kohji Matsumoto
for his patient support and continuous encouragement.
The author also would like to thank Prof.~Hiroshi Mikawa
for his valuable advice and comments.
The author is indebted to
Prof.~Alessandro Languasco and Prof.~Alessandro Zaccagnini
for several stimulating communications and for helpful information.
This work was supported by Grant-in-Aid for JSPS Research Fellow
(Grant Number: JP16J00906).

\vspace{3mm}

\begin{flushleft}
{\small
{\sc
Graduate School of Mathematics, Nagoya University,\\
Chikusa-ku, Nagoya 464-8602, Japan.
}

{\it E-mail address}: {\tt m14021y@math.nagoya-u.ac.jp}
}
\end{flushleft}
\end{document}